\newcommand*{\wh}{\widehat}
\newcommand*{\wt}{\widetilde}
\newcommand*{\cA}{\mathcal{A}}
\newcommand*{\cF}{\mathcal{F}}
\newcommand*{\cH}{\mathcal{H}}
\newcommand*{\cU}{\mathcal{U}}
\newcommand*{\N}{\mathbb{N}}
\newcommand*{\R}{\mathbb{R}}
\DeclareMathOperator*{\essinf}{ess\,inf}
\DeclareMathOperator*{\esssup}{ess\,sup}
\newcommand*{\tr}{\operatorname{tr}}
\newcommand*{\Id}{\operatorname{Id}}
\newcommand*{\Ind}{\mathds{1}}
\newcommand*{\spa}{\operatorname{span}}
\newcommand{\be}{\begin{eqnarray*}}
\newcommand{\ee}{\end{eqnarray*}}
\newcommand{\ben}{\begin{eqnarray}}
\newcommand{\een}{\end{eqnarray}}
\newcommand{\bi}{\begin{itemize}}
\newcommand{\ei}{\end{itemize}}
\newtheorem{theo}{Theorem}[section]
\newtheorem{lemma}[theo]{Lemma}
\newtheorem{propo}[theo]{Proposition}
\newtheorem{corollary}[theo]{Corollary}
\theoremstyle{definition}
\newtheorem{ex}[theo]{Example}
\newtheorem{defi}[theo]{Definition}
\newtheorem{remark}[theo]{Remark}
\title{
Matrix Riccati BSDEs with singular terminal condition and stochastic LQ control with linear terminal constraint
}
\author{Julia Ackermann$^{1}$, Thomas Kruse$^{2}$, Petr Petrov$^{3}$, Alexandre Popier$^{4}$ \bigskip \\
    \small{$^1$ Department of Mathematics \& Informatics, University of Wuppertal,}
    \vspace{-0.1cm}\\
    \small{Germany; e-mail: \texttt{jackermann}\textcircled{\texttt{a}}\texttt{uni-wuppertal.de}}\smallskip\\
    \small{$^2$ Department of Mathematics \& Informatics, University of Wuppertal,}
    \vspace{-0.1cm}\\
    \small{Germany; e-mail: \texttt{tkruse}\textcircled{\texttt{a}}\texttt{uni-wuppertal.de}} \smallskip\\
    \small{$^3$ Department of Mathematics \& Informatics, University of Wuppertal,}
    \vspace{-0.1cm}\\
    \small{Germany; e-mail: \texttt{ppetrov}\textcircled{\texttt{a}}\texttt{uni-wuppertal.de}} \smallskip\\
    \small{$^4$ Laboratoire Manceau de Mathématiques, Le Mans Université,}
    \vspace{-0.1cm}\\
    \small{France; e-mail: \texttt{alexandre.popier}\textcircled{\texttt{a}}\texttt{univ-lemans.fr}} \smallskip\\
}
\begin{document}

\maketitle

\begin{abstract}
    We analyze a class of multidimensional linear-quadratic stochastic control problems with random coefficients, motivated by multi-asset optimal trade execution. The problems feature non-diffusive controlled state dynamics and a terminal constraint that restricts the terminal state to a prescribed random linear subspace. We derive the associated Riccati backward stochastic differential equation (BSDE) and identify a suitable formalization of its singular terminal condition. Via a penalization approach, we establish existence of a minimal supersolution of the Riccati BSDE and use it to characterize both the value function and the optimal control. We analyze the asymptotic behavior of the supersolution near terminal time and discuss special cases where closed-form solutions can be obtained.
\end{abstract}

\section{Introduction}\label{sec:prob_form}

This paper analyzes multidimensional linear-quadratic (LQ) stochastic optimal control problems with random coefficients in which the controlled state process exhibits no diffusive behavior but is required to lie, at the terminal time, in a prescribed random linear subspace of the state space. 

To formally introduce the problem, we fix a dimension $d\in\N$, a time horizon $T\in (0,\infty)$, and a probability space $(\Omega, \cF, P)$ which supports a one-dimensional Brownian motion\footnote{For simplicity we restrict attention to a one-dimensional Brownian motion in the paper. The extension to the multidimensional case does not introduce any essential difficulties and can be handled by analogous arguments.} $W$. Let $(\cF_t)_{t\in [0,T]}$ be the completed filtration generated by~$W$. Let\footnote{Here and in the following we denote the sets of symmetric positive definite, symmetric positive semidefinite  and symmetric $(d \times d)$-matrices by $\mathbb{S}^d_{++}$, $\mathbb{S}^d_{+}$  and $\mathbb{S}^d$, respectively.} $\eta \colon [0,T]\times \Omega \to \mathbb S_{++}^d$, $\lambda  \colon [0,T]\times \Omega \to \mathbb S_+^d$, $\phi  \colon [0,T]\times \Omega \to \mathbb \R^{d \times d}$ and $A \colon [0,T]\times \Omega \to \mathbb \R^{d \times d}$   
be progressively measurable processes, and let $\theta$ be an $\cF_T$-measurable random variable with values in $\mathbb{S}^d_{+}$. 

To formalize the terminal state constraint, we 
let $v_1,\ldots, v_d\colon \Omega \to \R^d$ be $\cF_T$-measurable random vectors 
and denote $C(\omega)=\spa(v_1(\omega),\ldots, v_d(\omega))$ for $\omega \in \Omega$. We do not impose any form of linear independence among the vectors $v_1(\omega),\ldots, v_d(\omega)$, so in particular, the dimension $\dim(C)$ is itself a random quantity.

For all $t\in [0,T]$ and $x\in \R^d$ we consider the problem of minimizing 
\begin{equation}\label{eq:objective}
    J(t,x,u)=E\bigg[\int_t^T  \begin{pmatrix}X^u_s\\ u_s \end{pmatrix}^{\top} \begin{pmatrix} \lambda_s & \phi^{\top}_s\\
         \phi_s & \eta_s
      \end{pmatrix} \begin{pmatrix} X^u_s\\ u_s\end{pmatrix} ds + \langle X^u_T, \theta X^u_T \rangle \, \bigg| \, \cF_t \bigg]
\end{equation} 
over all progressively measurable processes $u\colon [t,T]\times \Omega \to \R^d$ with $u\in L^1([t,T])$ $P$-a.s.\ that induce state processes with the dynamics\footnote{Under appropriate assumptions we can also consider more general state dynamics of the form $\dot{X}_s = B_s u_s + A_s X_s$, $s \in [t, T]$. We refer to Remark \ref{rem:howtoincludeB} for details.} 
\begin{equation}\label{eq:dynamics}
    X^u_s = x + \int_t^s \left(A_r X^u_r - u_r\right) dr,\quad s\in [t,T],
\end{equation}
such that 
\begin{equation}\label{eq:constraint}
    X^u_T = \lim_{s \to T} X^u_s 
    \in C, \quad \text{$P$-a.s.}
\end{equation}
We denote the set of all such $u$ by $\mathcal A_0(t,x)$. 
Moreover, for all $t \in [0,T]$ and $x \in \R^d$, we define 
\begin{equation}\label{eq:val_fct_sing}
    v(t,x)=\essinf_{u\in \mathcal A_0(t,x)}J(t,x,u).
\end{equation}

Since the minimization of \eqref{eq:objective}, together with the state dynamics \eqref{eq:dynamics}, defines an LQ stochastic optimal control problem, albeit with a nonstandard random terminal state constraint \eqref{eq:constraint}, it is natural to seek a characterization of the value function and the optimal strategy via the solution $(Y,Z)$ of a Riccati backward stochastic differential equation (BSDE). The dynamics of $(Y,Z)$ before the terminal time $T$ are standard and can be derived from the literature (see, e.g., \cite{kohl:tang:01,kohl:tang:03}, \cite[Eq. (55)]{sunxiongyong2021indefinite}, 
\cite[Chapter 6]{yong1999stochastic}).

One central question is how to incorporate the terminal constraint \eqref{eq:constraint} into the terminal condition of the Riccati equation. Since, ideally, for every $t\in [0,T)$, the random matrix $Y_t$ characterizes the value function $v(t,\cdot)$ through its associated quadratic form, it is natural to expect that $\lim_{t \to T}\langle \psi_t,Y_t \psi_t\rangle=\infty$ for all paths $(\psi_t)$ with $\lim_{t\to T}\psi_t \notin C$ and $\lim_{t \to T}\langle \psi_t,Y_t \psi_t\rangle=\langle \psi_T,\theta \psi_T\rangle$ for all paths $(\psi_t)$ with $\lim_{t\to T}\psi_t \in C$. 
This together with the specific structure of \eqref{eq:objective} and \eqref{eq:dynamics} suggests the following informal candidate form of the Riccati equation with singular terminal condition 
\begin{equation}\label{eq:BSDE_sing}
\begin{split}
    dY_t&= \bigl((Y_t - \phi^{\top}_t) \eta_t^{-1} ( Y_t - \phi_t) - \lambda_t - Y_t A_t - A^{\top}_t Y_t \bigr)dt+Z_tdW_t,\quad t \in [0,T), \quad\\
    Y_T&=\infty \Ind_{C^c}+\theta \Ind_C.
\end{split}
\end{equation}

Our analysis reveals that, as in the one-dimensional case (see, e.g., \cite{kruse2016minimal} and \cref{rem:discuss_one-dim} below),
it is advantageous to work with supersolutions of \eqref{eq:BSDE_sing} to solve the stochastic control problem \eqref{eq:val_fct_sing}. 
A main contribution of this work is the following rigorous formalization of the singular terminal condition $Y_T=\infty \Ind_{C^c}+\theta \Ind_C$ in \eqref{eq:BSDE_sing}. 

\begin{defi}\label{def:supersol_bsde}
    We call a pair of progressively measurable processes $(Y,Z)\colon [0,T)\times \Omega \to \mathbb{S}^d_+\times \mathbb{S}^d$ a supersolution of \eqref{eq:BSDE_sing} if for all $t\in [0,T)$ it holds that 
    $$E\bigg[ \sup_{s \in [0, t]} \lVert Y_s \rVert^2 + \int_0^t \lVert Z_s \rVert^2 ds \bigg] < \infty$$ 
    and if it holds $P$-a.s.\ for all $t\in [0,T)$ and $s\in [0,t]$ that 
    \begin{equation}\label{eq:supersoln_BSDE_equation}
        Y_t-Y_s= \int_s^t\bigl((Y_r - \phi^{\top}_r) \eta_r^{-1} ( Y_r - \phi_r) - \lambda_r - Y_r A_r - A^{\top}_r Y_r \bigr)dr+\int_s^tZ_rdW_r
    \end{equation}
    and for any process $(\psi_t)_{t \in [0,T]}$ with $P$-a.s.\ left-continuous paths at time $T$ it holds $P$-a.s.\ that: $\liminf_{t\to T} \langle \psi_t, Y_t \psi_t \rangle=\infty$  if $\psi_T \notin C$, and $\liminf_{t\to T} \langle \psi_t, Y_t \psi_t \rangle \ge \langle \psi_T, \theta \psi_T \rangle$  if  $\psi_T \in C$.
\end{defi}

\begin{remark}\label{rem:discuss_one-dim}
    Assume that $d=1$. Then for each $\omega \in \Omega$ we either have $C(\omega)=\{0\}$ or $C(\omega)=\R$. Hence, $Y$ satisfies the singular terminal condition in the sense of \cref{def:supersol_bsde} if and only if $\liminf_{t\to T}Y_t(\omega)=\infty$ if $C(\omega)=\{0\}$ and $\liminf_{t\to T}Y_t(\omega)\ge \theta(\omega)$ if $C(\omega)=\R$. Define the random variable $\xi$ as $\xi(\omega)=\infty$ if $C(\omega)=\{0\}$ and $\xi(\omega)=\theta(\omega)$ if $C(\omega)=\R$. Then the terminal condition can be written as $\liminf_{t\to T}Y_t\ge \xi$. We have thus shown that \cref{def:supersol_bsde} coincides with the definition of supersolutions to BSDEs with singular terminal condition in \cite[Definition 1]{kruse2016minimal} in the case $d=1$.
\end{remark}

The following result summarizes the main findings of this article.
\begin{theo}\label{thm:main_result}
Assume that $A$ is a bounded process, that there exists $\delta \in (0, \infty)$ such that it holds $P$-a.s.\ for all $s \in [0, T]$ that 
    $$\begin{pmatrix} \lambda_s & \phi^{\top}_s\\
         \phi_s & \eta_s
      \end{pmatrix} \ge \begin{pmatrix} 0 & 0\\
         0 & \delta \Id
      \end{pmatrix},$$
and that
$$E\bigg[ \int_0^T \big(  \|\lambda_r\|^3  + \| \phi_r\|^3  + \|\eta_r\|^3 \big) dr\bigg] < \infty.$$ 
Then there exists a minimal solution $(Y,Z)$ to the BSDE \eqref{eq:BSDE_sing} in the sense of \cref{def:supersol_bsde}. Moreover, it holds for all $t\in [0,T)$, $x\in \R^d$ $P$-a.s.\ that $v(t,x)=\langle x,Y_t x \rangle$. 
Finally, for every $x\in \R^d$, $t\in [0,T)$ the optimal state process $(X^*_{s})_{s\in [t,T]}$ satisfies $E[ \int_t^T \lVert \dot{X}^{*}_s\rVert^2 ds ] < \infty$, $X^{*}_s = x +\int_t^s(-\eta_r^{-1}( Y_r - \phi_r) X^{*}_r + A_r X^*_r) dr$, $s \in [t, T]$, and Condition~\eqref{eq:constraint}. 
\end{theo}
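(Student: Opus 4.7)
My plan is to adapt the penalization scheme of the one-dimensional setting (cf.\ \cref{rem:discuss_one-dim} and \cite{kruse2016minimal}) to the matrix Riccati BSDE, and then combine the resulting minimal supersolution with a standard verification argument to solve the control problem \eqref{eq:val_fct_sing}.

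First, for each $n\in\N$ I would consider the Riccati BSDE on $[0,T]$ with the bounded, $\cF_T$-measurable, symmetric positive semidefinite terminal condition $Y^n_T = \theta + n(\Id - \Pi_C)$, where $\Pi_C$ is the orthogonal projection onto $C$. Using the classical theory of matrix Riccati BSDEs (as in \cite{kohl:tang:01,kohl:tang:03,sunxiongyong2021indefinite}), together with the uniform positivity $\eta \ge \delta\Id$ and the third-moment integrability assumptions, I would establish existence of a unique symmetric positive semidefinite solution $(Y^n,Z^n)$. A comparison principle for Riccati BSDEs yields monotonicity $Y^n \le Y^{n+1}$ in the symmetric sense, so $Y_t := \lim_{n\to\infty} Y^n_t$ is well-defined and adapted. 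To control the limit from above, I would use the probabilistic representation $\langle x, Y^n_t x\rangle = \essinf_u J_n(t,x,u)$, where $J_n$ penalizes the component of $X^u_T$ in $C^\perp$; substituting any $u\in\cA_0(t,x)$ gives $\langle x, Y^n_t x\rangle \le J(t,x,u)$ uniformly in $n$, hence an a priori bound on $Y_t$ for $t<T$. Standard stability results for BSDEs on each subinterval $[0,T-\varepsilon]$ then produce $Z$ such that $(Y,Z)$ satisfies \eqref{eq:supersoln_BSDE_equation}.

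Second, I would check that $(Y,Z)$ is a supersolution in the sense of \cref{def:supersol_bsde}. For $\psi_T\in C$, the bound $Y\ge Y^0$ and continuity of $Y^0$ at $T$ give $\liminf_{t\to T}\langle\psi_t, Y_t\psi_t\rangle \ge \langle\psi_T,\theta\psi_T\rangle$. For $\psi_T\notin C$, I would exploit the penalization: $\langle\psi_t,Y^n_t\psi_t\rangle$ dominates the terminal cost $\langle\psi_T,(\theta+n(\Id-\Pi_C))\psi_T\rangle$ minus a remainder, so letting $t\to T$ and $n\to\infty$ forces the liminf to be $+\infty$. Minimality follows from the fact that any other supersolution $\widetilde{Y}$ satisfies $\widetilde{Y}_T \ge Y^n_T$ in a weak (liminf) sense, hence $\widetilde{Y}\ge Y^n$ on $[0,T)$ by comparison and $\widetilde{Y}\ge Y$ in the limit.

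Third, for the verification, I would apply Itô's formula to $\langle X^u_s, Y_s X^u_s\rangle$ on $[t,s]$ for $s<T$ and any $u\in\cA_0(t,x)$. Using the BSDE dynamics and the $A X - u$ form of $\dot X^u$, the drift rearranges by completing the square into the running cost in \eqref{eq:objective} plus $\lVert u_r + \eta_r^{-1}(Y_r-\phi_r)X^u_r\rVert^2_{\eta_r}$. Passing $s\to T$ along a subsequence and invoking the supersolution property applied to $\psi = X^u$ (whose terminal limit lies in $C$ by admissibility) gives
\[
J(t,x,u) \ge \langle x, Y_t x\rangle + E\bigg[\int_t^T \big\lVert u_r + \eta_r^{-1}(Y_r-\phi_r)X^u_r\big\rVert^2_{\eta_r}\, dr \,\bigg|\, \cF_t\bigg],
\]
so $v(t,x) \ge \langle x, Y_t x\rangle$ and the candidate optimal feedback is $u^*_r = \eta_r^{-1}(Y_r-\phi_r)X^*_r$. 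Substitution yields the linear random ODE for $X^*$ stated in the theorem; Gronwall together with boundedness of $A$ and the integrability of the coefficients delivers existence, uniqueness, and $E[\int_t^T \lVert \dot X^*_r\rVert^2 dr]<\infty$. The constraint $X^*_T\in C$ finally follows by contradiction: if $P(X^*_T\notin C)>0$ on some $\cF_t$-set, the supersolution property forces the cost to be infinite, contradicting the verification identity $J(t,x,u^*)=\langle x, Y_t x\rangle<\infty$.

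\textbf{Main obstacle.} The most delicate point is the quantitative $+\infty$ blow-up of $Y_t$ in directions transverse to $C$, i.e.\ the upper part of the singular terminal condition. Because $C$ is random and may fail to depend continuously on $\omega$, controlling the behavior of $Y^n$ near $T$ as $n\to\infty$ requires a careful construction of an admissible steering strategy into $C$ with cost uniform in $n$; matching this from below seems to require a duality argument using the lower bound $\eta\ge\delta\Id$ to compare $Y^n$ with the value of an auxiliary penalized LQ problem, and then a limit passage that is compatible with the liminf formulation in \cref{def:supersol_bsde}.
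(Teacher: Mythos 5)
Your overall architecture — penalize with $n$ times the orthogonal projection onto $C^\perp$, pass to a monotone limit using a steering-strategy upper bound, verify via completion of squares — is the same as the paper's (\cref{prop:sol_convergence}, \cref{lem:uniform_upper_bound}, \cref{lemma:continuous_state}, \cref{prop:sol_sing_problem}). Two points in your plan, however, do not go through as written.

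First, and most seriously, your minimality argument is gappy. You claim that another supersolution $\widetilde Y$ satisfies ``$\widetilde Y_T \ge Y^n_T$ in a weak (liminf) sense, hence $\widetilde Y \ge Y^n$ on $[0,T)$ by comparison.'' A supersolution in the sense of \cref{def:supersol_bsde} has no terminal value at $T$; its terminal behavior is prescribed only through a $\liminf$ along left-continuous trajectories. There is no comparison theorem that converts such a trajectory-wise $\liminf$ inequality at $T$ into an ordering $\widetilde Y_t \ge Y^n_t$ on $[0,T)$ — to compare on $[0,T-\varepsilon]$ you would need $\widetilde Y_{T-\varepsilon}\ge Y^n_{T-\varepsilon}$, which is exactly what you are trying to prove. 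The paper (and the one-dimensional literature, e.g.\ \cite{kruse2016minimal}) obtains minimality through the control representation instead: the verification argument of \cref{lemma:continuous_state} applies to \emph{any} supersolution $\widetilde Y$ and produces an admissible control $\widetilde u\in\cA_0(t,x)$ with $\langle x,\widetilde Y_t x\rangle \ge J(t,x,\widetilde u)\ge v(t,x)=\langle x, Y_t x\rangle$. Your own Step 3 contains all the ingredients for this, but you only run it for the constructed $Y$; you should run it for $\widetilde Y$ and drop the comparison argument.

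Second, your penalized terminal condition $Y^n_T=\theta+n(\Id-\Pi_C)$ uses $\theta$ untruncated. The theorem imposes no integrability on $\theta$ (it is merely $\cF_T$-measurable and $\mathbb{S}^d_+$-valued), so the classical existence theory you invoke for $(Y^n,Z^n)$ — which needs a bounded, or at least third-moment, terminal condition — need not apply. The fix is to penalize with $n\xi+\theta^n$ where $\theta^n$ is a bounded monotone truncation of $\theta$, as in \cref{lem:measurable_truncation}. Finally, the ``main obstacle'' you flag is not one: since $C(\omega)$ is a linear subspace, $0\in C(\omega)$ for every $\omega$, so the deterministic linear liquidation $X_s=\frac{T-s}{T-t}x$ is admissible with terminal cost zero for every $n$; this yields the $n$-uniform, $\bar\xi$-independent upper bound of \cref{lem:uniform_upper_bound} with no duality argument needed.
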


\paragraph{Solution approach}
We prove \cref{thm:main_result} via a penalization approach. To this end note first that there exists an $\cF_T$-measurable bounded random matrix $\xi\colon \Omega \to \mathbb{S}^d_+$ 
such that 
\begin{equation}\label{eq:CeqKer}
    C=\ker(\xi) \quad P\text{-a.s.}
\end{equation}
Indeed, for every $\omega \in \Omega$ denote by $C^\perp(\omega)$ the orthogonal complement of $C(\omega)$. Choose an $\mathcal F_T$-measurable orthogonal basis $u_1(\omega),\ldots, u_{k(\omega)}(\omega)$ for $C^\perp(\omega)$. 
Then let $\xi(\omega)=\sum_{i=1}^{k(\omega)}u_i(\omega)u_i^\top(\omega)$ be the orthogonal projection on $C^\perp(\omega)$. It follows that $\xi$ is symmetric positive semidefinite with $C=\ker(\xi)$. Moreover, since $\xi$ is an orthogonal projection it is bounded (in $\omega$). 

Note that $\xi$ satisfying \eqref{eq:CeqKer} is in general not unique. In the sequel we assume that $C$ is given by \eqref{eq:CeqKer} for some $\cF_T$-measurable, symmetric positive semidefinite, bounded random matrix $\xi$. We show below that as expected all results about the solution of the control problem \eqref{eq:val_fct_sing} and the solution of \eqref{eq:BSDE_sing} are independent of the choice of $\xi$ (see in particular \cref{cor:independence_of_xi}).

Based on the construction of $\xi$, we consider a penalized version of \eqref{eq:objective}. More precisely, for all $n\in \N$, $t\in [0,T]$ and $x\in \R^d$ let 
\begin{equation}\label{eq:objective_pen}
    J_n(t,x,u)=E\bigg[\int_t^T \begin{pmatrix}X^u_s\\ u_s \end{pmatrix}^{\top} \begin{pmatrix} \lambda_s & \phi^{\top}_s\\
         \phi_s & \eta_s
      \end{pmatrix} \begin{pmatrix} X^u_s\\ u_s\end{pmatrix} ds +\langle X^u_T , (\theta^n + n \xi) X^u_T \rangle \,\bigg|\, \cF_t \bigg],
\end{equation} 
where $u\colon [t,T]\times \Omega \to \R^d$ is progressively measurable with $u\in L^1([t,T])$ $P$-a.s.\ and $X^u_s= x + \int_t^s \left(A_r X^u_r - u_r\right) dr$, $s \in [t,T]$. We denote the class of all such $u$ by $\mathcal A(t,x)$. Here $\theta^n$, $n\in \N$, is a sequence of bounded random matrices that converges from below to $\theta$ (see \cref{lem:measurable_truncation} and the beginning of \cref{sec:sing_case} for details). Note that we do not impose any terminal state constraint on $u\in \mathcal A(t,x)$. Instead any terminal state with $X^u_T\notin \ker(\xi)=C$ gets increasingly penalized as $n$ tends to $\infty$. Furthermore, for all $n\in\N$, $t \in [0,T]$, and $x \in \R^d$, we define
\begin{equation}\label{eq:value_fct_pen}
    v_n(t,x)=\essinf_{u\in \mathcal A(t,x)}J_n(t,x,u).
\end{equation} 
The Riccati BSDE associated to \eqref{eq:objective_pen} is given by 
\begin{equation}\label{eq:BSDE_pen}
\begin{split}
    dY^n_t&= \bigl((Y^n_t - \phi^{\top}_t) \eta_t^{-1} ( Y^n_t - \phi_t) - \lambda_t - Y^n_t A_t - A^{\top}_t Y^n_t \bigr)dt+Z^n_tdW_t,\quad t \in [0,T],\\
    Y_T^n&=n\xi + \theta^n.
\end{split}
\end{equation}
Recall that $\xi$ is assumed to be bounded. Therefore, for every $n\in \N$ the terminal condition $n\xi + \theta^n$ in \eqref{eq:BSDE_pen} is bounded. 
For this reason we analyze in Section \ref{sec:pen_case} Riccati BSDEs of the form \eqref{eq:BSDE_pen} with generic $\cF_T$-measurable, symmetric positive semidefinite terminal condition $\bar \xi$.

\paragraph{Related literature and applications} Initiated by the work of Wonham \cite{wonham1968matrix} in 1968 and further developed in, e.g., Bismut \cite{bismut1976linear,bismut1978controle} and Davis \cite{davis1977linear} in the 1970s,
LQ stochastic optimal control problems have become one of the most extensively studied topics in stochastic control theory. For a self-contained treatment we refer to \cite{yong1999stochastic} and for more recent references and developments to \cite{sunxiongyong2021indefinite} or \cite{sun2023stochastic}. To the best of our knowledge, however, multidimensional LQ stochastic optimal control problems with a terminal state constraint and the associated 
Riccati BSDE with singular terminal condition have so far only been investigated in \cite{horst2018multidimensionaloptimaltradeexecution}.

As in \cite{horst2018multidimensionaloptimaltradeexecution}, Problem \eqref{eq:val_fct_sing} is inspired by applications in optimal trade execution in mathematical finance. To illustrate this connection, we briefly outline how multi-asset trade execution problems can be formulated as instances of \eqref{eq:val_fct_sing}. In such problems, a typically large institutional investor seeks to liquidate a position $x\in \R^d$ consisting of $d\in \N$ assets within the trading horizon $[t,T]$. The investor controls her position $(X_s)_{s\in [t,T]}$ by adjusting the trading speed $(u_s)_{s\in [t,T]}$ in the $d$ assets so that $dX_s=-u_sds$, $X_t=x$. Here, positive components of $u_s$ correspond to selling the respective asset at the time $s\in [t,T]$, whereas negative components correspond to buying. Departing from the classical assumption that trades can be executed at prevailing market prices, the optimal execution literature incorporates price impact, i.e., the adverse effect of the investors' trades on market prices. If the price impact is assumed to be linear in the trading speed, its magnitude can be modeled by $\eta_s u_s$, leading to instantaneous trading costs $\langle u_s, \eta_s u_s\rangle$ at the time $s\in [t,T]$. Importantly, a non-diagonal matrix process $\eta$ allows for cross-asset effects, reflecting the phenomenon that trading one asset can influence the prices of others. Moreover, by allowing $\eta$ to evolve randomly in time, our model accounts for stochastic liquidity. The quadratic term of the form $\langle X_s, \lambda_s X_s\rangle $ can be interpreted as a risk penalty for holding an open position $X_s$ at the time $s\in [t,T]$. Integrating instantaneous trading costs and risk penalty over time and taking expectations leads to an overall cost functional that is a special case of \eqref{eq:objective}. Finally, the requirement that the portfolio is fully liquidated at the time $T$ gives rise to the terminal state constraint $X_T=0$ (i.e., $C=\{0\}$). We note, however, that our framework also accommodates more general terminal portfolio configurations where the investor aims to achieve a prescribed balance between assets, represented by a linear constraint of the form $\xi X_T=0$.

One-dimensional (i.e., single-asset) variants of \eqref{eq:val_fct_sing} and the associated BSDE \eqref{eq:BSDE_sing} with singular terminal condition have attracted considerable attention in mathematical finance, stochastic control and stochastic analysis in recent years. 
Most of these works go beyond the LQ framework and analyze more general homogeneous cost functionals, relying on BSDE methods, on the theory of superprocesses, or on (viscosity) solutions of the associated Hamilton-Jacobi-Bellman equation, see, e.g., \cite{schied2013control,ankirchner2014bsdes,graewe2018smooth,graewe2015non,ankirchner2015optimal, horst2016constrained, kruse2016minimal,horst2021continuous}. 
The non-homogeneous case is studied in \cite{ankirchner2020optimal}. 
LQ variants incorporating price trends or tracking of a random target position are analyzed in
\cite{ankirchner2015optimal,bank2018linear}.
Well-posedness and continuity properties of the BSDE with singular terminal condition are addressed in \cite{popier2006backward,popier2007backward,popier2016limit,popier2017integro, sezer2019backward,marushkevych2020limit,ahmadi2021backward,grae:popi:21,caci:deni:popi:25, cacitti2025growth,cacitti2025continuity}. Stochastic resilience
is incorporated in \cite{graewe2017optimal,horst2024optimal} and  game-theoretic aspects are examined in \cite{fu2020mean, fu2021mean, fu2024mean}.
The effects of model uncertainty and regime switching dynamics are studied in \cite{horst2022portfolio} and \cite{fu2025system}, respectively.

Alongside these contributions in the one-dimensional situation, a multidimensional LQ formulation is developed in \cite{horst2018multidimensionaloptimaltradeexecution}, where the authors incorporate persistent price impact and stochastic resilience in addition to  deterministic instantaneous cross-price impact.
While the present work neglects persistent price impact and stochastic resilience, it extends \cite{horst2018multidimensionaloptimaltradeexecution} in several important directions: it allows for stochastic instantaneous cross-price impact $\eta$, incorporates a general linear terminal constraint $\xi X_T=0$ and introduces the matrices $A$ in the dynamics \eqref{eq:dynamics} and $\phi$ in the cost functional \eqref{eq:objective}. These extensions give rise to new analytical challenges, including the identification of an appropriate formalization of the singular terminal condition for the Riccati BSDE (cf.\ \eqref{eq:BSDE_sing}) and the derivation of suitable a priori bounds for the (penalized) BSDE (\cref{prop:sol_pen_problem_eta_not_bounded_above} or \cref{prop:lower_bound_without_phi}). Compared to the one-dimensional case, additional technical challenges arise from the lack of monotonicity or boundedness properties of optimal state trajectories.

\paragraph{Outline} 
In Section \ref{sec:pen_case}, we study the control problem \eqref{eq:objective}-\eqref{eq:dynamics} (without the constraint~\eqref{eq:constraint}, that is $C=\mathbb R^d$) and the associated Riccati BSDE \eqref{eq:BSDE_sing} under the convexity condition \ref{assumption:A0}, the boundedness hypothesis \ref{assumption:A1} on $A$, and the third-order moment conditions \ref{assumption:A2} and \ref{assumption:A3}  for the parameters $\lambda$ and $\phi$ and the terminal condition $\theta$. 
This extends 
existing results where typically all coefficients are assumed to be bounded. The main result of \cref{sec:pen_case} is  \cref{prop:sol_pen_problem_eta_not_bounded_above}. At the end of \cref{sec:pen_case}, we also establish several a priori estimates, which will be useful for the constrained case. 

In the next part, Section \ref{sec:sing_case}, we incorporate the constraint \eqref{eq:constraint} on the terminal value of the state process $X$. We obtain the existence of a supersolution for the BSDE with a singular terminal condition and the existence of an optimal control (see \cref{prop:sol_sing_problem}) under Conditions \ref{assumption:A0} to \ref{assumption:A2} and when $\eta$ satisfies the integrability condition \ref{assumption:B2}. Note that there is no condition on the terminal value $\theta$. Roughly speaking, the price to pay is that the solution processes are defined only on $[0,T)$ rather than on the entire interval~$[0,T]$.

The final \cref{sect:invertible_case} is devoted to the case $C=\{0\}$, for which we are able to obtain more precise properties of the BSDE solution and therefore of the optimal control in two scenarios: when $\eta$ has uncorrelated multiplicative increments (\cref{ssect:umi}), or when $\eta$ is an It\^o process (\cref{ssect:eta_ito_process}).

\paragraph{Notation} 
Let $d \in \N$ and $T\in(0,\infty)$. For a matrix $A \in \R^{d\times d}$, we denote by $\lambda_{\max}(A)$ the largest eigenvalue of $A$, by $\lambda_{\min}(A)$ the smallest eigenvalue of $A$, and by $\lVert A \rVert_2$ the spectral norm of $A$. For any matrix $A$, we define by $\lVert A \rVert$ the Frobenius norm of $A$. Moreover, for any matrix $A \in \R^{d\times d}$ we denote by $\tr[A]$ the trace of $A$; note that it holds for all $A, B\in \R^{d\times d}$ that $\tr[AB] \le \lVert A \rVert \lVert B \rVert$. 
 We denote the sets of symmetric positive definite, symmetric positive semidefinite  and symmetric $(d \times d)$-matrices by $\mathbb{S}^d_{++}$, $\mathbb{S}^d_{+}$  and $\mathbb{S}^d$, respectively. For matrices $A, B \in \mathbb{S}^d$ we use the notation $A \ge B$ in the sense that the matrix $(A - B)$ is positive semidefinite. For a sequence of $\mathbb{S}^d$-valued random variables $(Y^n)_{n \in \N}$ on a probability space $(\Omega,\cF,P)$, we say that the sequence is non-decreasing (non-increasing) if for every $n \in \N$ the matrix $(Y^{n+1}-Y^n)$ is $P$-almost surely positive (negative) semidefinite. 
 We use the following space of progressively measurable\footnote{With respect to the filtration generated by the Brownian motion $W$.} processes  for $p\in[1,\infty)$:
 $$ \mathcal{L}^p(\Omega, C([0, T], \mathbb{S}^d))= \{f: \Omega \to  C([0, T], \mathbb{S}^d) : E[\textstyle{\sup_{t \in [0,T]}} \lVert f_t \rVert^p] < \infty \}.$$

\section{The penalized case}\label{sec:pen_case}

In this section we consider the stochastic linear-quadratic control problem~\eqref{eq:value_fct_pen} together with the associated Riccati equation~\eqref{eq:BSDE_pen}. For all $n \in \N$, 
$t \in [0, T]$, and $x \in \R^d$,
this is the problem of minimizing
\begin{equation}\label{eq:objective_pen_sec2}
    \bar{J}(t,x,u)=E\bigg[\int_t^T \begin{pmatrix}X^u_s\\ u_s \end{pmatrix}^{\top} \begin{pmatrix} \lambda_s & \phi^{\top}_s\\
         \phi_s & \eta_s
      \end{pmatrix} \begin{pmatrix} X^u_s\\ u_s\end{pmatrix} ds +\langle X^u_T , \bar{\xi} X^u_T \rangle \,\bigg|\, \cF_t \bigg],
\end{equation}
where $\bar{\xi}$ is an $\cF_T$-measurable random variable with values in $\mathbb{S}^d_+$, the control $u\colon [t,T]\times \Omega \to \R^d$ is progressively measurable with $u\in L^1([t,T])$ $P$-a.s., and the state associated to $u$ is given by $X^u_s=x + \int_t^s \left(A_r X^u_r - u_r\right) dr$, $s \in [t,T]$. As introduced above we denote the set of all such $u$ by $\mathcal A(t,x)$.
Furthermore, for all  $t \in [0,T]$ and $x \in \R^d$, we define
\begin{equation}\label{eq:value_fct_pen2}
    \bar{v}(t,x)=\essinf_{u\in \mathcal A(t,x)}\bar{J}(t,x,u).
\end{equation}
The Riccati equation corresponding to this problem takes the form 
\begin{equation}\label{eq:BSDE_pen2}
    dY_t= \bigl((Y_t - \phi^{\top}_t) \eta_t^{-1} ( Y_t - \phi_t) - \lambda_t - Y_t A_t - A^{\top}_t Y_t \bigr)dt+Z_tdW_t,\quad t \in [0,T], \quad Y_T=\bar{\xi}.
\end{equation}
We now formalize the notion of a solution to this equation. 

\begin{defi}\label{def:sol_bsdes}
     We call a pair of progressively measurable processes $(Y,Z)\colon [0,T]\times \Omega \to \mathbb{S}^d_+\times \mathbb{S}^d$ 
     a solution of \eqref{eq:BSDE_pen2} if $E[ \sup_{s \in [0, T]} \lVert Y_s \rVert^2 + \int_0^T \lVert Z_s \rVert^2 ds ] < \infty$ and if it holds $P$-a.s.\ for all $t\in [0,T]$ that
    \begin{equation}\label{eq:BSDE_finite_term}
        Y_t=\bar{\xi} - \int_t^T \bigl((Y_s - \phi^{\top}_s) \eta_s^{-1} ( Y_s - \phi_s) - \lambda_s - Y_s A_s - A^{\top}_s Y_s \bigr) ds - \int_t^T Z_sdW_s.
    \end{equation}
\end{defi}

Clearly, under appropriate integrability or boundedness assumptions on the coefficients (see, e.g., \ref{assumption:A0}--\ref{assumption:A2} below) the solution component $Y$ has continuous paths.

Next, we present a positivity assumption on the cost matrix. 
\begin{enumerate}[label=\textbf{A\arabic*)}]
    \item\label{assumption:A0} There exists $\delta \in (0, \infty)$ such that it holds $P$-a.s.\ for all $s \in [0, T]$ that 
    $$\begin{pmatrix} \lambda_s & \phi^{\top}_s\\
         \phi_s & \eta_s
      \end{pmatrix} \ge \begin{pmatrix} 0 & 0\\
         0 & \delta \Id
      \end{pmatrix}. $$
\end{enumerate}

\begin{remark}\label{rem:uniform_convexity}
    Note that under Assumption \ref{assumption:A0} the cost functional $J$ is uniformly convex in the sense that for all $t\in [0,T]$ and $u\in \mathcal A(t,0)$ we have
    $$
    \bar J(t,0,u)\ge \delta E\bigg[\int_t^T \|u_s\|^2ds \,\bigg |\,\mathcal F_t \bigg].
    $$
    We use this assumption in the sequel to apply results from \cite{sunxiongyong2021indefinite}. Moreover, this uniform convexity ensures uniqueness of optimal controls $u$ (in the $\text{Leb} \otimes P$-a.s.\ sense).
\end{remark} 

We begin by analyzing the case where~$\eta$, $\lambda$, $\lVert \phi \rVert$, $\bar{\xi}$ and $\|A \|$ are bounded from above, and $\eta$ is bounded away from zero.
Under this assumption, we can draw on existing results in the literature -- as mentioned most notably those of \cite{sunxiongyong2021indefinite} -- to establish existence and uniqueness of a solution to \eqref{eq:BSDE_pen2}, as well as to characterize the value function and the optimal control in \eqref{eq:value_fct_pen2}.

\begin{propo}\label{prop:sol_pen_problem}
    Let assumption \ref{assumption:A0} be satisfied and
    assume that there exist $ K$ and $N$ in $(0, \infty)$ such that it holds $P$-a.s.\ for all $t \in [0, T]$ that $ \eta_t \le N \Id $, $\| A_t \| \le K$, $\| \phi_t\| \le  N$, $ \lambda_t \leq N \Id$, and $0 \leq \bar \xi \leq N\Id$. 
    Then:
    
    (i) There exists a unique solution $(Y,Z)$ of \eqref{eq:BSDE_pen2} in the sense of \cref{def:sol_bsdes}. 
    Furthermore, 
    it holds $P$-a.s.\ for all $t \in [0,T]$ that 
    $$ \lVert Y_t \rVert \le e^{4 K T} E\bigg[ \| \bar{\xi} \| + \int_t^T \| \lambda_s\| ds \,\bigg|\, \cF_t  \bigg].$$
    In particular, it holds $P$-a.s.\ for all $t\in[0,T]$ that 
    \begin{equation*}
        \lVert Y_t \rVert 
        \le e^{4 K T} (1+T) N \sqrt{d} .
    \end{equation*}
    
    (ii) For all $t\in [0,T]$ and $x\in \R^d$ it holds $P$-a.s.\ that $\bar{v}(t,x)=\langle x,Y_t x\rangle$.
    
    (iii) For all $t\in [0,T]$ and $x\in \R^d$ the optimal state process $X^*$ satisfies $E[ \int_t^T \lVert \dot{X}^{*}_s\rVert^2 ds ] < \infty$ and $X^{*}_s = x +\int_t^s(-\eta_r^{-1}( Y_r - \phi_r) X^{*}_r + A_r X^*_r) dr$, $s \in [t, T]$.
\end{propo}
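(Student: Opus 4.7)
The plan is to reduce the existence, uniqueness, and identification statements in (i)--(iii) to the classical well-posedness theory for indefinite stochastic LQ problems under uniform convexity (as in \cite{sunxiongyong2021indefinite,yong1999stochastic}), and to establish the explicit a priori bound on $\|Y_t\|$ in (i) by a direct conditional-expectation estimate at the BSDE level.

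Under the standing boundedness of $\eta$, $\lambda$, $\phi$, $A$ and $\bar{\xi}$, the uniform ellipticity $\eta \ge \delta \Id$ from \ref{assumption:A0}, and the positive semidefiniteness of $\bar{\xi}$, the penalized problem \eqref{eq:value_fct_pen2}--\eqref{eq:BSDE_pen2} falls within the indefinite LQ framework of \cite[\S 4]{sunxiongyong2021indefinite} (see also \cite[Chapter 6]{yong1999stochastic}, \cite{kohl:tang:01,kohl:tang:03}), and the uniform convexity recorded in \cref{rem:uniform_convexity} is exactly the structural hypothesis required to invoke their existence, uniqueness, and verification results. From these, I would obtain existence and uniqueness of a solution $(Y,Z)$ to \eqref{eq:BSDE_pen2} in the sense of \cref{def:sol_bsdes} with $Y_t\in\mathbb{S}^d_+$, the identification $\bar{v}(t,x)=\langle x,Y_t x\rangle$ in (ii), and the optimal feedback $u^{*}_s = \eta_s^{-1}(Y_s-\phi_s) X^{*}_s$; substituting the latter into \eqref{eq:dynamics} yields the SDE for $X^{*}$ in (iii). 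The $L^2$-bound on $\dot X^{*}$ would then follow from \cref{rem:uniform_convexity} (giving $\delta E[\int_t^T \|u^{*}_s\|^2 ds] \le \langle x, Y_t x\rangle < \infty$ via the first part of (i)), together with the identity $\dot X^{*}_s = A_s X^{*}_s - u^{*}_s$, the bound $\|A\|\le K$, and a Grönwall estimate on $\sup_s E[\|X^{*}_s\|^2]$.

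For the a priori bound on $\|Y_t\|$, I would take conditional expectation given $\cF_t$ in \eqref{eq:BSDE_finite_term}. Since $Y_s\in\mathbb{S}^d_+$ (built into \cref{def:sol_bsdes}) and $\eta_s^{-1}\in\mathbb{S}^d_+$, the Riccati quadratic term $(Y_s-\phi^{\top}_s)\eta_s^{-1}(Y_s-\phi_s)$ is positive semidefinite and can be discarded in the PSD partial order, yielding
\begin{equation*}
    0 \le Y_t \le E\bigg[\bar{\xi} + \int_t^T \bigl(\lambda_s + Y_s A_s + A_s^{\top} Y_s\bigr)\,ds \,\bigg|\, \cF_t\bigg].
\end{equation*}
Passing to the Frobenius norm and using the submultiplicative estimate $\|Y_s A_s + A_s^{\top} Y_s\| \le 2K \|Y_s\|$, a backward Grönwall inequality applied to the conditional-expectation process delivers the claimed bound $\|Y_t\| \le e^{4KT} E[\|\bar{\xi}\| + \int_t^T \|\lambda_s\| ds \mid \cF_t]$; the second displayed inequality is then immediate from the pointwise bounds $\|\bar{\xi}\| \le N\sqrt{d}$ and $\|\lambda_s\| \le N\sqrt{d}$.

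The principal subtlety is that the bound must be closed at the matrix (Frobenius) level rather than scalar-by-scalar: a naive suboptimality argument of the form $\bar{v}(t,x) \le \bar{J}(t,x,0)$ only yields control of $\langle x, Y_t x\rangle$ for fixed $x$, i.e., of the spectral quantity $\lambda_{\max}(Y_t)$. Obtaining the Frobenius bound requires discarding the Riccati quadratic correction uniformly in the matrix order, which is precisely what the positive semidefiniteness of $Y_t$ built into \cref{def:sol_bsdes} permits.
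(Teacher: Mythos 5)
Your proposal is essentially correct and follows the same overall strategy as the paper: existence, uniqueness, the value-function identification and the optimal feedback are all imported from \cite[Theorem~6.1]{sunxiongyong2021indefinite} under the uniform convexity of \cref{rem:uniform_convexity}, and the explicit bound on $\lVert Y_t\rVert$ is derived separately by exploiting positive semidefiniteness of the Riccati quadratic term. Where you genuinely diverge is in how the linear terms $Y_sA_s+A_s^\top Y_s$ are absorbed: the paper first performs the change of variables $M_t=(U_t^{-1})^\top Y_t U_t^{-1}$ with $dU_t=-U_tA_t\,dt$, which eliminates these terms from the drift and yields the bound directly at the cost of the factor $e^{4KT}$ coming from $\lVert U\rVert\,\lVert U^{-1}\rVert\le e^{2KT}$ applied twice; you instead keep them and run a backward Gr\"onwall argument on $s\mapsto E[\lVert Y_s\rVert\,|\,\cF_t]$. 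Your route is legitimate (the Loewner inequality $0\le Y_t\le E[\,\cdot\,|\,\cF_t]$ does pass to the Frobenius norm since $0\le A\le B$ implies $\tr(A^2)\le\tr(AB)\le\tr(B^2)$, and the integrability needed for Gr\"onwall is supplied by $E[\sup_s\lVert Y_s\rVert^2]<\infty$ from \cref{def:sol_bsdes}) and in fact produces the sharper constant $e^{2KT}$; the paper's transformation has the advantage of being reusable verbatim for the lower bound in \cref{prop:lower_bound_without_phi}.

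One step you gloss over and should make explicit: part (ii) asserts the identity for $\bar v(t,x)=\essinf_{u\in\mathcal A(t,x)}\bar J(t,x,u)$, where $\mathcal A(t,x)$ only requires $u\in L^1([t,T])$ pathwise, whereas \cite{sunxiongyong2021indefinite} delivers $\langle x,Y_tx\rangle=\essinf_{u\in\mathcal U(t,x)}\bar J(t,x,u)$ over the smaller class $\mathcal U(t,x)=\mathcal A(t,x)\cap L^2(\Omega\times[t,T])$. Since $\mathcal U(t,x)\subset\mathcal A(t,x)$ one only gets $\bar v(t,x)\le\langle x,Y_tx\rangle$ for free, and the reverse inequality requires an argument: the paper shows that any $u\in\mathcal A(t,x)$ with $\bar J(t,x,u)\le\langle x,Y_tx\rangle$ satisfies $\delta E[\int_t^T\lVert u_r\rVert^2dr]\le E[\langle x,Y_tx\rangle]<\infty$ by \ref{assumption:A0} and $\bar\xi\ge0$, hence already lies in $\mathcal U(t,x)$. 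You invoke exactly this uniform-convexity mechanism later for the $L^2$ bound on $\dot X^*$, so the ingredient is present in your write-up, but it must also be deployed here to close (ii); as written, that inference is missing.
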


\begin{proof}
    The existence and uniqueness of the BSDE solution $(Y,Z)$ follows from \cite[Theorem~6.1]{sunxiongyong2021indefinite} (note also Remark \ref{rem:uniform_convexity}). 
    It also follows from \cite{sunxiongyong2021indefinite} for all $t \in [0, T]$ and $x \in \R^d$ that 
    $$\langle x, Y_t x\rangle = \essinf_{u \in \cU(t, x)} \bar{J}(t, x, u),$$  
    where $\cU(t,x)=\mathcal A(t,x) \cap L^2(\Omega \times [t, T])$. Note that $\mathcal{U}(t,x) \subset \mathcal A(t,x)$, hence it holds $$\essinf_{u \in \cU(t, x)} \bar{J}(t, x, u) \geq \essinf_{u \in \cA(t, x)} \bar{J}(t, x, u).$$
    If there is a control $u \in \mathcal A(t,x)$ such that $\bar{J}(t,x,u) \leq \langle x, Y_t x\rangle $, then since $Y_t \in L^2(\Omega)$, we deduce using assumption \ref{assumption:A0} and positive semidefiniteness of $\bar{\xi}$ that 
    $$\delta E\bigg[ \int_t^T \langle u_r, u _r \rangle dr\bigg] 
    \le E\big[ \bar{J}(t,x,u) \big] 
    \leq E\big[ \langle x, Y_t x\rangle\big] < \infty.$$ 
    Hence $u \in \cU(t,x)$. Thus, we obtain $\langle x, Y_t x\rangle = \bar{v} (t,x)$. 
    This also implies, for all $t\in[0,T]$, that $Y_t\ge 0$. 

    To obtain the upper bound, let $(U_t)_{t\in[0,T]}$ be given by 
    $$d U_t = - U_t A_t dt, \quad t \in [0,T], \quad U_0 = \Id.$$
    Note that, $P$-a.s.\ for all $t \in [0,T]$, the matrix $U_t$ is invertible, and the inverse satisfies the dynamics
    $$ dU^{-1}_t = A_t U^{-1}_t dt, \quad t \in [0,T], \quad U^{-1}_0 = \Id.$$
    Moreover, let $M=(M_t)_{t\in[0,T]}$ be given by $M_t=(U_t^{-1})^\top Y_t U_t^{-1}$, $t\in[0,T]$. 
    We obtain by integration by parts that $M$ satisfies 
    \begin{align}\nonumber
       dM_t &=  (U_t^\top)^{-1}\bigl((Y_t - \phi^{\top}_t) \eta_t^{-1} ( Y_t - \phi_t) - \lambda_t  \bigr)U_t^{-1} dt+(U_t^\top)^{-1}Z_tU_t^{-1}dW_t, \quad t \in [0,T],\\ \label{eq:def_M_for_upper_bound}
       M_T &= (U^\top_T)^{-1} \bar{\xi} U_T^{-1}.  
    \end{align}
    Note that, for all $t \in [0,T]$, it holds that $ \| U_t \| \le e^{KT} $ and $ \| U_t^{-1} \| \le e^{KT} $. Furthermore, $ (U^\top)^{-1}((Y - \phi^{\top}) \eta^{-1} ( Y - \phi)  )U^{-1}$ is nonnegative, since $\eta^{-1}$ is. Hence, we have from \eqref{eq:def_M_for_upper_bound} the upper bound 
    $$ \lVert M_t \rVert \le e^{2 K T} E\bigg[ \| \bar{\xi} \| + \int_t^T \| \lambda_s\| ds \,\bigg|\, \cF_t  \bigg], \quad t\in[0,T]. $$
    From this, we immediately obtain for $Y$ the upper bound 
    $$ \lVert Y_t \rVert \le e^{4 K T} E\bigg[ \| \bar{\xi} \| + \int_t^T \| \lambda_s\| ds \,\bigg|\, \cF_t  \bigg], \quad t \in [0,T] .$$
    
    Furthermore, it follows from \cite[Theorem~6.1]{sunxiongyong2021indefinite} that for all $t\in [0,T]$ and $x\in \R^d$ there exists a unique optimal control $u^*$. Moreover, the optimal state process $X^*=X^{u^*}$ satisfies $X^{*}_s = x +\int_t^s(-\eta_r^{-1}( Y_r - \phi_r) X^{*}_r + A_r X^*_r) dr$, $s \in [t, T]$, and the optimal control is given by $u^{*}_s = \eta_s^{-1}( Y_s - \phi_s) X^{*}_r$, $s \in [t, T]$.
\end{proof}

The next result provides a way to construct monotone truncations of progressively measurable processes taking values in $\mathbb{S}^d_+$ (or $\mathbb{S}^d_{++}$) such that the truncations preserve symmetry, measurability, and positive (semi-)definiteness.

\begin{lemma}\label{lem:measurable_truncation}
    Let $f: [0,T]\times \Omega \to \mathbb{S}^d_+$ be an $(\cF_t)$-progressively measurable process. 
    For all $\omega \in \Omega$ and $s \in [0,T]$ let $\bar{f}_s(\omega) = \max_{i,j \in \{1, \dots, d\}} \lvert (f_{i,j})_s(\omega) \rvert$. For all $\omega \in \Omega$, $L \in \N$, and $s \in [0,T]$ we define 
$$
f^L_s(\omega) =
\begin{cases}
 \left( 1 \wedge L (\bar{f}_s(\omega))^{-1} \right) f_s(\omega), & \text{if } \bar{f}_s(\omega) \neq 0, \\
0,    & \text{otherwise}.
\end{cases} $$
Then the process $f^L: [0,T]\times \Omega \to \mathbb{S}^d_+$ is $(\cF_t)$-progressively measurable and uniformly bounded for any $L \in \N$. Moreover, the sequence $(f^L_s(\omega))_{L \in \N}$ is monotone for all $\omega \in \Omega$ and $s \in [0,T]$. 
\end{lemma}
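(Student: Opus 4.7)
The plan is to observe that, by construction, for each $(s,\omega)$ the matrix $f^L_s(\omega)$ is a nonnegative scalar multiple of $f_s(\omega)$, which reduces every clause of the lemma to a statement about the scalar factor. Concretely, set $c^L_s(\omega) := 1 \wedge L (\bar f_s(\omega))^{-1}$ if $\bar f_s(\omega) \neq 0$ and $c^L_s(\omega) := 0$ otherwise, so that $f^L_s(\omega) = c^L_s(\omega) f_s(\omega)$ holds pointwise. The boundary case $\bar f_s(\omega) = 0$ is harmless because then every entry of $f_s(\omega)$ vanishes, so $f_s(\omega) = 0$ and both sides equal the zero matrix.

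For progressive measurability, first note that $\bar f_s(\omega) = \max_{i,j \in \{1,\dots,d\}} \lvert (f_{i,j})_s(\omega) \rvert$ is progressively measurable as a pointwise maximum of finitely many moduli of entries of the progressively measurable process $f$. Since $x \mapsto \Ind_{\{x > 0\}}(1 \wedge L x^{-1})$ is a Borel measurable function on $[0,\infty)$, the scalar process $c^L$ is progressively measurable, and therefore so is the product $f^L = c^L f$. Symmetry and positive semidefiniteness follow at once because $c^L_s(\omega) \in [0,1]$ and $f_s(\omega) \in \mathbb{S}^d_+$, so a nonnegative scalar multiple of $f_s(\omega)$ stays in $\mathbb{S}^d_+$. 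For the uniform bound, if $\bar f_s(\omega) > 0$ then $c^L_s(\omega) \bar f_s(\omega) \le L$, so every entry of $f^L_s(\omega)$ has modulus at most $L$; if $\bar f_s(\omega) = 0$ then $f^L_s(\omega) = 0$. Either way $\lVert f^L_s(\omega) \rVert \le L d$ for every $L \in \N$.

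For monotonicity, the map $L \mapsto 1 \wedge L x^{-1}$ is non-decreasing in $L$ for each fixed $x > 0$, and the boundary case $x = 0$ gives the constant value $0$, so $L \mapsto c^L_s(\omega)$ is non-decreasing for every $(s,\omega)$. Consequently, for all $L \in \N$,
$$f^{L+1}_s(\omega) - f^L_s(\omega) = \bigl(c^{L+1}_s(\omega) - c^L_s(\omega)\bigr) f_s(\omega)$$
is a nonnegative scalar times a matrix in $\mathbb{S}^d_+$, hence itself in $\mathbb{S}^d_+$; this is exactly the non-decreasing property defined in the Notation section. There is no serious obstacle here: once one identifies the truncation as a scalar multiplier built from Borel measurable operations applied to the progressively measurable function $\bar f$, every assertion of the lemma becomes immediate.
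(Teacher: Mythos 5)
Your proof is correct and takes the same route the paper intends: the paper's own proof is the single line ``All properties directly follow from the construction,'' and your argument simply makes explicit the observation that $f^L$ is a $[0,1]$-valued, progressively measurable scalar multiple of $f$ that is non-decreasing in $L$, from which measurability, boundedness, positive semidefiniteness, and monotonicity in the semidefinite order all follow. No gaps.
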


\begin{proof}
All properties directly follow from the construction.
\end{proof}

Our next aim is to weaken the assumption in \cref{prop:sol_pen_problem} that $\eta$, $\lambda$, $\lVert \phi \rVert$ and $\bar{\xi}$ are bounded from above. Compared to the one-dimensional case, the monotonicity argument (see, e.g., \cite[Section 3]{ankirchner2014bsdes}) cannot be applied here; instead, we impose integrability conditions on the parameters. 
\begin{enumerate}[label=\textbf{A\arabic*)}]
\setcounter{enumi}{1}
    \item\label{assumption:A1} 
    There exists $K \in (0, \infty)$ such that it holds $P$-a.s.\ for all $t \in [0, T]$ that $\| A_t\| \le K $.
    
    \item\label{assumption:A2}  
    The processes $\lambda$ and $\phi$ have a moment of order three, that is, 
        $$E\bigg[ 
        \int_0^T (\|\lambda_r\|^3  + \| \phi_r\|^3 )\, dr\bigg] < \infty.$$
  
    \item\label{assumption:A3} The terminal condition $\bar \xi$ has a third moment, that is, $E[  \|\bar{\xi}\|^3 ] < \infty$. 
\end{enumerate}

\begin{lemma}\label{lem:continuous_state_sec2}
    Assume that \ref{assumption:A0}--\ref{assumption:A3} are satisfied and that there exists a solution $(Y,Z)$ of the BSDE~\eqref{eq:BSDE_pen2} in the sense of \cref{def:sol_bsdes}. 
    Let $t\in[0,T]$ and $x\in\R^d$. 
    Let $u=(u_s)_{s\in[t,T]}$ be given by $u_s=\eta_s^{-1} (Y_s - \phi_s) X_s$, $s\in[t,T]$, where $(X_s)_{s\in[t,T]}$ is the unique solution of the differential equation with random coefficients
    \begin{equation}\label{eq:ODE_with_random_coeffs_for_X}
        dX_s= (- \eta_s^{-1} (Y_s-\phi_s) + A_s) X_s ds, \quad s\in[t,T], \quad X_t=x.
    \end{equation}
    Then it holds that $E[\int_t^T \lVert u_s \rVert^2 ds]<\infty$ and $\langle x, Y_t x \rangle \ge \bar{J}(t,x,u)$.
\end{lemma}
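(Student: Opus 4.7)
The plan is a verification argument based on It\^o's formula applied to $s \mapsto \langle X_s, Y_s X_s\rangle$, combined with a localization procedure to handle the fact that the associated stochastic integral is only a local martingale in general.

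First I would verify that $u \in \mathcal A(t,x)$. The coefficient $B_r := -\eta_r^{-1}(Y_r - \phi_r) + A_r$ of the linear random ODE \eqref{eq:ODE_with_random_coeffs_for_X} lies in $L^1([t,T])$ $P$-a.s., since $A$ is bounded by \ref{assumption:A1}, $\|\eta_r^{-1}\|_2 \leq \delta^{-1}$ by \ref{assumption:A0}, $Y$ has $P$-a.s.\ continuous paths by \cref{def:sol_bsdes}, and $\phi$ is $P$-a.s.\ in $L^3([0,T])$ by \ref{assumption:A2}. A Gr\"onwall-type estimate then shows that $X$ is $P$-a.s.\ bounded on $[t,T]$, from which $u \in L^1([t,T])$ $P$-a.s.\ follows, so $u$ belongs to $\mathcal A(t,x)$.

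Next, applying It\^o's formula (noting that $X$ has no martingale part and using the symmetry of $Y_s$) would yield, after algebraic simplification,
$$d\langle X_s, Y_s X_s\rangle = -L_s\, ds + \langle X_s, Z_s X_s\rangle dW_s,$$
where $L_s := \langle X_s, \lambda_s X_s\rangle + 2\langle \phi_s X_s, u_s\rangle + \langle u_s, \eta_s u_s\rangle$ is precisely the integrand of the running cost in $\bar{J}(t,x,u)$. The cancellation rests on two facts: the $A$-terms drop because $\langle X_s, Y_s A_s X_s\rangle = \langle X_s, A_s^\top Y_s X_s\rangle$ by symmetry of $Y_s$, and the remaining contributions combine via the identity $\eta_s u_s = (Y_s - \phi_s) X_s$. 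I would then localize with $\tau_n := \inf\{s \geq t : \|Y_s\|^2 + \int_t^s (\|Z_r\|^2 + \|\phi_r\|^2)\, dr > n\} \wedge T$, which satisfies $\tau_n \uparrow T$ $P$-a.s., so that $X$ is uniformly bounded on $[t,\tau_n]$ (by a constant depending on $n$) and $\int_t^{\cdot \wedge \tau_n} \langle X_r, Z_r X_r\rangle dW_r$ is a true martingale. Taking $E[\,\cdot\, |\, \cF_t]$ gives
$$\langle x, Y_t x\rangle = E\bigg[\int_t^{\tau_n} L_s\, ds + \langle X_{\tau_n}, Y_{\tau_n} X_{\tau_n}\rangle \,\bigg|\, \cF_t\bigg].$$
Passing $n \to \infty$, monotone convergence applies to the running cost term (since $L_s \geq 0$ by \ref{assumption:A0}) and Fatou's lemma to the terminal term (nonnegative because $Y_s \in \mathbb S^d_+$); combined with the $P$-a.s.\ continuity of $X$ and $Y$ and the identity $Y_T = \bar{\xi}$, this produces $\langle x, Y_t x\rangle \geq \bar{J}(t,x,u)$. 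The moment bound $E[\int_t^T \|u_s\|^2 ds] < \infty$ then follows by taking full expectations and invoking \ref{assumption:A0} in the form $L_s \geq \delta \|u_s\|^2$ together with $E[\|Y_t\|^2] < \infty$ from \cref{def:sol_bsdes}.

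The main obstacle is the careful bookkeeping of the algebraic cancellation and the subsequent localization: since $Y$, $Z$ and $\phi$ are only integrable rather than uniformly bounded, It\^o's formula cannot be applied globally and expectations taken directly. The positive semidefiniteness of $Y_s$ and the nonnegativity of $L_s$, both guaranteed by \ref{assumption:A0}, are precisely what convert the localized equality into the desired inequality via Fatou's lemma.
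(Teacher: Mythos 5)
Your proposal is correct and follows essentially the same route as the paper: apply It\^o's formula to $s\mapsto\langle X_s,Y_sX_s\rangle$ to obtain the exact cancellation down to the running cost $L_s$, localize the stochastic integral, take conditional expectations, and pass to the limit via monotone convergence and Fatou using \ref{assumption:A0} and $Y\in\mathbb{S}^d_+$. The only (immaterial) difference is that you construct an explicit localizing sequence, whereas the paper invokes an abstract one; your choice does make the stopped integrand square-integrable, so the argument goes through.
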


\begin{proof}
First note that since $A$ and $\eta^{-1}$ are bounded and $Y$ and $\phi$ are in $L^1([0,T]\times \Omega)$, there indeed exists a unique solution of \eqref{eq:ODE_with_random_coeffs_for_X} 
(see, for example, \cite[Theorem~3.1.1]{liu2015spde}). 
It holds almost surely that the paths of  
$(X_s)_{s\in [t,T]}$ are continuous and hence bounded and that $Z$ is in $L^2([0,T] \times \Omega)$. Hence $( \int_t^r \langle X_s,Z_sX_s  \rangle dW_s)_{r\in[t,T]}$ is a local martingale 
and there exists a sequence of stopping times $(\tau_k)_{k\in\N}$ that is increasing, converges to $T$, and makes this local martingale stopped at the time $\tau_k$ a true martingale for all $k\in\N$. 
It\^o's formula implies for all $k\in\N$ that 
\begin{align*} 
            & \langle X_{\tau_k}, Y_{\tau_k} X_{\tau_k}\rangle - \langle x , Y_t x \rangle\\
            &=
            \int_t^{\tau_k} \big\langle Y_s X_s, \big(-\eta_s^{-1}(Y_s-\phi_s)+A_s \big) X_s \big\rangle ds 
            + \int_t^{\tau_k} \langle X_s, Z_sX_s\rangle dW_s \\  
            & \quad + \int_t^{\tau_k} \big\langle X_s, \big( (Y_s-\phi_s^\top) \eta_s^{-1} (Y_s-\phi_s) - \lambda_s - Y_s A_s - A_s^\top Y_s \big) X_s \big\rangle ds \\  
            & \quad + \int_t^{\tau_k} \big\langle X_s, \big( - (Y_s-\phi_s^\top) \eta_s^{-1} + A_s^\top \big) Y_s X_s \big\rangle ds 
            \\  
            &= - \int_t^{\tau_k} \langle X_s, \lambda_s X_s \rangle ds  
            + \int_t^{\tau_k}\langle u_s, \eta_s u_s \rangle ds 
            - 2 \int_t^{\tau_k} \langle Y_s X_s, u_s \rangle ds
            + \int_t^{\tau_k} \langle X_s,Z_sX_s \rangle dW_s\\
            &= - \int_t^{\tau_k} \big( \langle X_s, \lambda_s X_s \rangle  
            + \langle u_s, \eta_s u_s \rangle 
            + 2 \langle \phi_s X_s, u_s \rangle \big) ds
            + \int_t^{\tau_k} \langle X_s,Z_sX_s \rangle dW_s .
\end{align*}
Taking conditional expectations, we obtain for all $k\in\N$ that 
\begin{equation}\label{eq:1800a}
   E \bigg[  \int_t^{ \tau_k} \big(\langle u_s, \eta_s u_s \rangle + 2 \langle \phi_s X_s, u_s \rangle + \langle X_s, \lambda_s X_s \rangle \big) ds + \langle X_{ \tau_k}, Y_{\tau_k} X_{ \tau_k} \rangle \, \bigg|\, \mathcal F_t \bigg] =  \langle x, Y_t x\rangle. 
\end{equation}
Observe that by \ref{assumption:A0} and the monotone convergence theorem we have 
$$ \delta E \bigg[  \int_t^{T} \| u_s \|^2 ds\bigg] \le E\bigg[ \langle x, Y_t x \rangle \bigg] < \infty.  $$ 
This shows that 
$E[\int_t^T \| u_s \|^2 ds] < \infty$. 
Combining \ref{assumption:A0}, the monotone convergence theorem, and Fatou's lemma with  \eqref{eq:1800a}, we obtain that 
\begin{equation*}\label{eq:optim_control_cost}
    \begin{split}
    \langle x, Y_t x\rangle 
    &= \liminf_{k \to \infty} E\bigg[\int_t^{\tau_k} \big( \langle u_s, \eta_s u_s \rangle + 2 \langle \phi_s X_s, u_s \rangle + \langle X_s, \lambda_s X_s \rangle \big) ds + \langle X_{\tau_k }, Y_{\tau_k } X_{\tau_k} \rangle \, \bigg|\, \cF_t \bigg]\\ 
    &  \geq E\bigg[  \int_t^{ T} \big(\langle u_s, \eta_s u_s \rangle + 2 \langle \phi_s X_s, u_s \rangle+ \langle X_s, \lambda_s X_s \rangle \big) ds + \liminf_{k \to \infty} \langle X_{ \tau_k}, Y_{\tau_k} X_{ \tau_k} \rangle \, \bigg|\, \mathcal F_t \bigg] \\
    & = E\bigg[  \int_t^{ T} \big(\langle u_s, \eta_s u_s \rangle+2 \langle \phi_s X_s, u_s \rangle + \langle X_s, \lambda_s X_s \rangle \big) ds +  \langle X_{T}, \bar{\xi} X_{T} \rangle \,\bigg|\, \mathcal F_t \bigg] \\
    & = \bar{J}(t,x,u). 
    \end{split}
\end{equation*}
This completes the proof.
\end{proof}

\begin{propo}\label{prop:sol_pen_problem_eta_not_bounded_above}
    Assume that \ref{assumption:A0}--\ref{assumption:A3} are satisfied.  Then:
    
    (i) There exists a solution $(Y,Z)$ of \eqref{eq:BSDE_pen2} in the sense of \cref{def:sol_bsdes}. Furthermore, it holds $P$-a.s.\ for all $t \in [0,T]$ that 
    \begin{equation}\label{eq:upper_bound_Y}
        \| Y_t \| \leq e^{4 K T} E\bigg[  \|\bar{\xi}\| + \int_t^T \|\lambda_r \| dr  \,\bigg|\, \mathcal F_t \bigg].
    \end{equation}
    In particular, $Y \in L^3 (\Omega \times [0,T])$.
    
    (ii) For all $t\in [0,T]$ and $x\in \R^d$ it holds $P$-a.s.\ that $\bar{v}(t,x)=\langle x,Y_t x\rangle$.
    
    (iii) For all $t\in [0,T]$ and $x\in \R^d$ the optimal state process $X^*$ satisfies $E[ \int_t^T \lVert \dot{X}^{*}_s\rVert^2 ds ] < \infty$ and $\dot{X}^{*}_s = - \eta_s^{-1}( Y_s - \phi_s) X^{*}_s + A_s X^*_s$, $s \in [t, T].$
\end{propo}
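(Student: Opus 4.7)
My approach is to reduce the proposition to Proposition~\ref{prop:sol_pen_problem} via a bounded-coefficient approximation. First, using Lemma~\ref{lem:measurable_truncation} together with an analogous spectral truncation for $\eta$, I would construct, for each $L \in \N$, bounded $(\cF_t)$-progressively measurable approximations $\lambda^L, \phi^L, \eta^L, \bar\xi^L$ converging monotonically to $\lambda, \phi, \eta, \bar\xi$ as $L\to\infty$, chosen so that \ref{assumption:A0} continues to hold with the same $\delta$ and so that the associated cost functionals $\bar J^L$ satisfy $\bar J^L(t,x,u) \le \bar J^{L+1}(t,x,u) \le \bar J(t,x,u)$ for every $u \in \cA(t,x)$. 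Proposition~\ref{prop:sol_pen_problem} then yields unique bounded solutions $(Y^L, Z^L)$ with $\langle x, Y^L_t x\rangle = \bar v^L(t,x)$ and the pointwise estimate
$$\|Y^L_t\| \le e^{4KT} E\bigg[\|\bar\xi^L\| + \int_t^T \|\lambda^L_r\|\,dr \,\bigg|\, \cF_t\bigg] \le e^{4KT} E\bigg[\|\bar\xi\| + \int_t^T \|\lambda_r\|\,dr \,\bigg|\, \cF_t\bigg].$$
By \ref{assumption:A2}, \ref{assumption:A3}, and Doob's $L^3$-inequality, this majorant belongs to $L^3(\Omega\times[0,T])$ uniformly in $L$, which already gives the bound~\eqref{eq:upper_bound_Y} for any limit point.

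The probabilistic representation together with the monotonicity of the $\bar J^L$ implies that $Y^L_t \le Y^{L+1}_t$ $P$-a.s., so the pointwise limit $Y_t := \lim_{L\to\infty} Y^L_t$ exists $P$-a.s., inherits the estimate~\eqref{eq:upper_bound_Y}, and lies in $L^3(\Omega\times[0,T])$ by dominated convergence. To identify the martingale part I would apply It\^o's formula to $\|Y^L - Y^M\|^2$ for $M \ge L$ and combine the uniform $L^3$-bound on $Y^L$, the bound $\|(\eta^L)^{-1}\|_2 \le \delta^{-1}$ coming from~\ref{assumption:A0}, the boundedness of $A$, and \ref{assumption:A2} to show that $(Z^L)$ is Cauchy in $L^2(\Omega\times[0,T])$, with limit $Z$. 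Dominated convergence then allows me to pass to the limit in the integral form~\eqref{eq:BSDE_finite_term} of the truncated BSDE and conclude that $(Y,Z)$ is a solution of~\eqref{eq:BSDE_pen2} in the sense of Definition~\ref{def:sol_bsdes}, proving~(i).

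For~(ii) and~(iii), Lemma~\ref{lem:continuous_state_sec2} applied to $(Y,Z)$ produces a feedback control $u^* \in \cA(t,x)$ with corresponding state $X^*$ satisfying the linear ODE in~(iii) and $E[\int_t^T \|\dot X^*_s\|^2 ds]<\infty$, and yields $\langle x, Y_t x\rangle \ge \bar J(t,x,u^*) \ge \bar v(t,x)$. The reverse inequality comes from the approximation: for every $u \in \cA(t,x)$, $\bar J(t,x,u) \ge \bar J^L(t,x,u) \ge \bar v^L(t,x) = \langle x, Y^L_t x\rangle$, and taking the essential infimum over $u$ and then $L \to \infty$ gives $\bar v(t,x) \ge \langle x, Y_t x\rangle$; uniqueness of the minimizer follows from the uniform convexity of $\bar J$ noted in Remark~\ref{rem:uniform_convexity}.

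The main obstacle will be the coordinated construction of truncations $(\lambda^L, \phi^L, \eta^L, \bar\xi^L)$ that simultaneously preserves~\ref{assumption:A0}, enforces monotonicity of the associated cost functionals, and is compatible with the probabilistic representation; once this is arranged, the Cauchy argument for $Z^L$ via It\^o on $\|Y^L - Y^M\|^2$, where the cubic integrability assumptions~\ref{assumption:A2} and~\ref{assumption:A3} (entering through the conditional expectation majorant) are essential to absorb the quadratic driver term, becomes a largely routine verification.
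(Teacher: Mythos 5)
Your proposal follows essentially the same route as the paper's proof: monotone bounded truncations, \cref{prop:sol_pen_problem} for the truncated problems, monotone convergence of $Y^{L}$ under the uniform conditional-expectation bound, an It\^o/Cauchy argument for $(Z^{L})$ in $L^2(\Omega\times[0,T])$, and \cref{lem:continuous_state_sec2} combined with the monotone value-function inequality for (ii) and (iii). The one step you explicitly flag as the ``main obstacle'' --- a coordinated truncation that preserves \ref{assumption:A0} with the same $\delta$ and makes the cost functionals monotone --- is resolved in the paper by a single \emph{joint} truncation: one applies \cref{lem:measurable_truncation} to the $\mathbb{S}^{2d}_+$-valued process $G_s=\begin{pmatrix}\lambda_s & \phi_s^{\top}\\ \phi_s & \eta_s-\delta \Id\end{pmatrix}$ (positive semidefinite precisely by \ref{assumption:A0}), reads off the blocks $\lambda^L,\phi^L,\bar\eta^L$ of $G^L$, and sets $\eta^L=\bar\eta^L+\delta\Id$. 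Truncating $\lambda$, $\phi$ and $\eta$ separately, as your phrase ``an analogous spectral truncation for $\eta$'' might suggest, would in general destroy both the positive semidefiniteness of the shifted block matrix and the monotonicity of $G^L$ in the semidefinite order, so the block construction is the missing ingredient rather than a routine detail. With that construction supplied, the remaining steps of your plan coincide with the paper's argument.
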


\begin{proof}
The idea of the proof is to construct a solution to the BSDE as a limit of solutions to BSDEs with bounded coefficients. The solution to a BSDE with bounded coefficients is given in \cref{prop:sol_pen_problem}. After performing truncations given by \cref{lem:measurable_truncation} we want to obtain a set of ``monotone'' problems satisfying the lower bound \ref{assumption:A0}, for this reason we introduce auxiliary processes. 
For all $s\in[0,T]$ set 
$\bar{\eta}_s = {\eta_s}-\delta \Id$ and
\begin{equation*}
    G_s = \begin{pmatrix}
    \lambda_s & \phi^{\top}_s\\
         \phi_s & \bar{\eta}_s
      \end{pmatrix} .
\end{equation*} 
Note that the process $G=(G_s)_{s \in [0,T]}$ satisfies the conditions of \cref{lem:measurable_truncation} 
(with the dimension $2d$ instead of $d$). 
For all $L\in\N$ let $G^L=(G^L_s)_{s\in [0,T]}$ be the process obtained by truncating $G$ as introduced in \cref{lem:measurable_truncation}. We denote for all $L\in\N$ the blocks of $G^L$ corresponding to $\lambda$, $\phi$, and $\bar{\eta}=(\bar{\eta}_s)_{s\in[0,T]}$ by $\lambda^L$, $\phi^L$, and $\bar{\eta}^L$, respectively. 
Note that the processes $\lambda^L$, $\phi^L$, and $\bar{\eta}^L$ are uniformly bounded for any $L\in\N$. 
Moreover, for all $L\in\N$ and $s\in[0,T]$ we define $\eta^L_s  = \bar{\eta}^L_s + \delta \Id$. 
In addition, we apply a version of \cref{lem:measurable_truncation} for $\mathbb{S}^d_+$-valued random variables to $\bar{\xi}$ and denote the resulting sequence of truncated random variables by $(\bar{\xi}^L)_{L\in\N}$.

Now, \cref{prop:sol_pen_problem} establishes that for all $L\in\N$ there exists a unique solution $(Y^{ L}, Z^{ L})$ of the BSDE 
$$ dY^{ L}_s = ((Y^{ L}_s - (\phi^L_s)^\top) (\eta^L_s)^{-1} ( Y^{ L}_s - \phi^L_s) - \lambda^L_s - Y^L_s A_s - A^\top_s Y^L_s)ds+ Z^{ L}_s dW_s, \quad s\in[0,T], $$
with the terminal condition $ Y^{L}_T =  \bar{\xi}^L$. 
The value function representation (cf.\ \cref{prop:sol_pen_problem}(ii)) implies for all $x \in \R^d$, $t\in[0,T]$, and $L\in\N$ that 
\begin{equation}\label{eq:value_fct_rep_Y_nl}
    \langle x, Y^{ L}_t x \rangle = \essinf_{u \in \cA(t,x)} E\bigg[\int_t^T \begin{pmatrix}X^u_s\\ u_s \end{pmatrix}^{\top} 
    \begin{pmatrix} \lambda^L_s & (\phi^{L}_s)^\top\\
         \phi^L_s & \eta^L_s
      \end{pmatrix} 
      \begin{pmatrix} X^u_s\\ u_s\end{pmatrix} 
      ds 
      +  \langle X^u_T, \bar{\xi}^L X^u_T\rangle \, \bigg|\,\cF_t\bigg].
\end{equation}
The fact that for all $s \in [0,T]$ the sequences $(G^L_s)_{L\in\N}$ and $(\bar{\xi}^L)_{L\in\N}$ are non-decreasing 
ensures that for all $s \in [0,T]$ the sequence $(Y^{L}_s)_{L \in \N}$ is non-decreasing. 
Moreover, it holds $P$-a.s.\ for all $t\in[0,T]$ and $L\in\N$ that 
\begin{equation*}
    \| Y^{L}_t \| \le e^{4KT} E\bigg[  \| \bar{\xi}^L \| + \int_t^T \| \lambda^L_s \| ds \,\Big|\, \mathcal F_t \bigg] 
\leq e^{4KT} E\bigg[  \| \bar{\xi} \| + \int_t^T \| \lambda_s \| ds \,\Big|\, \mathcal F_t \bigg].
\end{equation*}
Using \cref{lemma:matrix_convergence} 
we can, for all $s\in[0,T]$, define $Y_s$ as the $P$-a.s.\ limit of $Y^{ L}_s$ as $L \to \infty$. From this definition it immediately follows 
that the stochastic process $Y=(Y_s)_{s\in[0,T]}$ is $P$-a.s.\ $\mathbb{S}_+^d$-valued and satisfies the upper bound \eqref{eq:upper_bound_Y}. 
Moreover, note that \eqref{eq:upper_bound_Y}, Jensen's inequality, \ref{assumption:A2}, and  \ref{assumption:A3} ensure that $Y \in L^3(\Omega \times [0,T] )$.

We next show that $Y=(Y_s)_{s\in[0,T]}$ solves the BSDE \eqref{eq:BSDE_pen2}. 
For all 
$M, L \in \N$, 
applying It\^o's formula gives for all $t\in[0,T]$ that
\begin{equation}\label{eq:1650a}
    \begin{split}
    & \lVert Y^{ M}_t - Y^{ L}_t \rVert^2 + \int_t^T \lVert Z^{ {M}}_s - Z^{ {L}}_s \rVert^2 ds \\
    & = \| \bar{\xi}^M - \bar{\xi}^L \|^2 - 2 \int_t^T \tr \left[(Y^{ M}_s - Y^{ L}_s)(Z^{ M}_s - Z^{ L}_s) \right] d W_s  \\
    & \quad + 2 \int_t^T \tr \big[(Y^{ M}_s - Y^{ L}_s)(\lambda^M_s- \lambda^L_s) \big] ds \\
    & \quad + 2 \int_t^T \tr \big[(Y^{ M}_s - Y^{ L}_s)(Y^M_s A_s + A_s^\top Y^M_s - Y^L_s A_s - A_s^\top Y^L_s) \big] ds \\
    & \quad + 2 \int_t^T \tr \Big[(Y^{ M}_s - Y^{ L}_s)\Big(-(Y^{ M}_s - (\phi^M_s)^{\top})(\eta^M_s )^{-1}( Y^{ M}_s - \phi^M_s)
    \\
    &\qquad \qquad \qquad \qquad \qquad \qquad+ (Y^{ L}_s - (\phi^L_s)^{\top})(\eta^L_s)^{-1}( Y^{ L}_s - \phi^L_s)\Big) \Big] ds .
    \end{split}
\end{equation}
Note that for all $L\in\N$ we have by \cref{prop:sol_pen_problem} that there exists a constant $C_L$ such that for all $t \in [0,T]$ it holds $\lVert Y_t^{ L}\rVert \le C_L $ 
and $E [\int_0^T \lVert Z^{ L}_s \rVert^2 ds] < \infty $.
Combining these facts yields for all $M, L\in\N$ and $t\in[0,T]$ that
\begin{equation}\label{eq:1650b}
    E\left[\int_t^T \tr \left[(Y^{ M}_s - Y^{ L}_s)(Z^{ M}_s - Z^{ L}_s) \right] d W_s\right] = 0 .
\end{equation}
Observe that for all $L \in \N$ and $s\in[0,T]$ it holds $P$-a.s.\ that $\lVert (\eta^L_s)^{-1} \rVert \le \sqrt{d} \delta^{-1}$. 
It follows  
for all $M,L \in \N$ and $s\in[0,T]$ that
\begin{align} \nonumber
    &  \tr \!\Big[ \!(Y^{ M}_s - Y^{ L}_s)\big(-(Y^{ M}_s  - (\phi^M_s)^{\top})(\eta^M_s )^{-1}( Y^{ M}_s - \phi^M_s) 
    + (Y^{ L}_s - (\phi^L_s)^{\top})(\eta^L_s)^{-1}( Y^{ L}_s - \phi^L_s)\big) \! \Big] \\ \nonumber
    & \le d^{\frac{1}{2}} \delta^{-1} \| Y^M_s - Y^L_s \| \big(  \| Y^M_s\|^2 \! + \! \| Y^L_s \|^2 + 2 (\|Y^M_s\|\|\phi^M_s\| +\|Y^L_s\|\|\phi^L_s\|) 
    +\|\phi^M_s\|^2 \! + \! \|\phi^L_s\|^2  \big) 
    \\
    & \le 2d^{\frac{1}{2}} \delta^{-1} \| Y^M_s - Y_s^L \| \left( \| Y_s\|^2  + 2 \| Y_s\| \| \phi_s \| + \| \phi_s \|^2 \right) . \label{eq:traceinequ1a}
\end{align} 
Moreover, we obtain for all $M,L \in \N$ and $s\in[0,T]$ that 
\begin{equation}\label{eq:traceinequ1b}
    \begin{split}
        & \tr \big[(Y^{ M}_s - Y^{ L}_s)(\lambda^M_s- \lambda^L_s) \big]  
        \le 2 \lVert Y^{ M}_s - Y^{ L}_s \rVert \lVert \lambda_s \rVert .
    \end{split}
\end{equation}
Furthermore, \ref{assumption:A1} ensures for all $M,L \in \N$ and $s\in[0,T]$ that 
\begin{equation}\label{eq:traceinequ1c}
    \begin{split}
        & \tr \big[(Y^{ M}_s - Y^{ L}_s)(Y^M_s A_s + A_s^\top Y^M_s - Y^L_s A_s - A_s^\top Y^L_s) \big] 
        \le 4K \lVert Y_s \rVert \lVert Y_s^M - Y_s^L \rVert
        .
    \end{split}
\end{equation}
For all 
$s\in[0,T]$ we define 
\begin{equation}\label{eq:1650cc}
    \begin{split}
        R_s
        & = \big( 2 d^{\frac{1}{2}}  \delta^{-1} (  \| Y_s\|^2+  2 \|Y_s\| \|\phi_s\| + \|\phi_s\|^2 ) 
        + 2 \lVert \lambda_s \rVert + 4 K \| Y_s \| \big) .
    \end{split}
\end{equation}
Note that it holds for all $L,M \in \N$ and $s\in[0,T]$ that $R_s \lVert Y^{ M}_s - Y^{ L}_s \rVert \le 2 R_s \lVert Y_s \rVert$. 
From Hölder's inequality, $Y \in L^3(\Omega \times [0,T] )$, and \ref{assumption:A2} we have that 
$E[\int_0^T R_s \lVert Y_s \rVert\, ds ]<\infty$. 
Therefore, the dominated convergence theorem proves that 
\begin{equation}\label{eq:integralRLMconverges}
    \lim_{L,M\to\infty} E\bigg[ \int_0^T R_s \lVert Y^{ M}_s - Y^{ L}_s \rVert \, ds \bigg] = 0.
\end{equation}
In addition, note that 
\begin{equation}\label{eq:xibarconvergesinL2}
    \lim_{L,M \to\infty} E\big[\| \bar{\xi}^M - \bar{\xi}^L \|^2\big] = 0
\end{equation}
due to \ref{assumption:A3} and the dominated convergence theorem. 
Observe that \eqref{eq:1650a}--\eqref{eq:1650cc} 
show for all $M,L\in\N$ that 
\begin{equation}\label{eq:estimateZconvergesinL2}
    \begin{split}
        E\bigg[ \int_0^T \lVert Z^{M}_s - Z^{ L}_s \rVert^2 ds \bigg] 
        & \le E\left[ \|\bar{\xi}^M - \bar{\xi}^L \|^2 \right] 
        + 2 E\bigg[ \int_0^T R_s \lVert Y^{ M}_s - Y^{ L}_s \rVert \, ds \bigg] .
    \end{split}
\end{equation} 
This combined with \eqref{eq:integralRLMconverges} and \eqref{eq:xibarconvergesinL2} implies
that $(Z^{ L})_{L\in\N}$ is a Cauchy sequence in the space $L^2(\Omega \times [0,T])$ and thus converges to some $Z \in L^2(\Omega \times [0,T])$. 
Next, \eqref{eq:1650a} and \eqref{eq:1650cc} show for all $M,L\in\N$ that  
\begin{equation}\label{eq:supYMLinequ}
    \begin{split}
    \sup_{t \in [0,T]} \lVert Y^{ M}_t - Y^{ L}_t \rVert^2 
    & \leq 
    \| \bar{\xi}^M - \bar{\xi}^L \|^2 + 
    2 \sup_{t \in [0,T]}\left| \int_t^T \tr \left[(Y^{ M}_s - Y^{ L}_s)(Z^{ M}_s - Z^{ L}_s) \right] d W_s \right| \\
    & \quad + 2 \int_0^T  R_s \lVert Y^{ M}_s - Y^{ L}_s \rVert \, ds .
    \end{split}
\end{equation}
Using the Burkholder--Davis--Gundy inequality and  Young's inequality, we obtain that there exists a constant $c>0$ such that 
for all $M,L\in\N$ it holds 
\begin{equation*}
    \begin{split}
        & E\bigg[ \sup_{t \in [0,T]}\left| \int_t^T \tr \left[(Y^{ M}_s - Y^{ L}_s)(Z^{ M}_s - Z^{ L}_s) \right] d W_s \right| \bigg] \\
        & \leq c E\bigg[ \Big(\sup_{t\in[0,T]} \lVert Y^M_t - Y^L_t \rVert \Big) \bigg(\int_0^T \lVert Z^M_s - Z^L_s \rVert^2 \, ds\bigg)^{\frac{1}{2}} \bigg] \\
        & \le \frac{1}{4} E\bigg[ \sup_{t\in[0,T]} \lVert Y^M_t - Y^L_t \rVert^2 \bigg] 
        + c^2 E\bigg[ \int_0^T \lVert Z^M_s - Z^L_s \rVert^2 \, ds \bigg] .
    \end{split}
\end{equation*}
It thus follows from \eqref{eq:supYMLinequ} for all $M,L\in\N$ that 
\begin{equation*}
    \begin{split}
    \frac12 E\bigg[\sup_{t \in [0,T]} \lVert Y^{ M}_t - Y^{ L}_t \rVert^2 \bigg]
    & \leq 
    E\big[ \| \bar{\xi}^M - \bar{\xi}^L \|^2 \big] 
    + 2 c^2 E\bigg[ \int_0^T \lVert Z^M_s - Z^L_s \rVert^2 \, ds \bigg] \\
    & \quad + 2 E\bigg[\int_0^T  R_s \lVert Y^{ M}_s - Y^{ L}_s \rVert \, ds \bigg] .
    \end{split}
\end{equation*}
Hence, using \eqref{eq:estimateZconvergesinL2}, \eqref{eq:xibarconvergesinL2}, and \eqref{eq:integralRLMconverges}, we obtain that 
$(Y^{L})_{L \in \mathbb N}$ is also a Cauchy sequence in the space $\mathcal{L}^2(\Omega, C([0, T], \mathbb{S}^d))$. 
Furthermore, note that by using the dominated convergence theorem we can establish for all $t\in[0,T]$ that 
\begin{equation*}
   \lim_{L\to\infty} E\bigg[ \Big\lVert \int_t^T (Y^{ L}_s - (\phi^L_s)^{\top})(\eta^L_s)^{-1}( Y^{ L}_s - \phi^L_s) ds - \int_t^T (Y_s - \phi_s^{\top})(\eta_s)^{-1}( Y_s - \phi_s) ds \Big\rVert \bigg] = 0.
\end{equation*}
Finally, we obtain for all $t\in[0,T]$ that 
\begin{equation*}
   \begin{split}
   & E\bigg[ \bigg\lVert Y_t 
   - \bigg( \bar{\xi} 
    - \int_t^T (Y_s - \phi_s^{\top})(\eta_s)^{-1}( Y_s - \phi_s) ds + \int_t^T (\lambda_s + Y_s A_s + A_s^{\top} Y_s) ds
    \\
    & \qquad \qquad \quad - \int_t^T Z_s dW_s \bigg) 
    \bigg\rVert \bigg] \\
    & \le \lim_{L\to \infty} 
    \bigg( E\big[ \lVert Y_t - Y^{ L}_t \rVert \big]
   + E\bigg[ \Big\lVert \int_t^T Z_s dW_s - \int_t^T Z^{ L}_s dW_s \Big\rVert \bigg] + E\big[ \lVert \bar{\xi} - \bar{\xi}^{ L} \rVert \big]  \\
   & \qquad \qquad + E\bigg[ \Big\lVert \int_t^T (Y^{ L}_s - (\phi^L_s)^{\top})(\eta^L_s)^{-1}( Y^{ L}_s - \phi^L_s) ds \\
   & \qquad \qquad \qquad \qquad- \int_t^T (Y_s - \phi_s^{\top})(\eta_s)^{-1}( Y_s - \phi_s) ds \Big\rVert \bigg]\\
   & \qquad \qquad + E\bigg[ \Big\lVert \int_t^T (\lambda_s + Y_s A_s + A_s^{\top} Y_s) ds - \int_t^T (\lambda^L_s +  Y^L_s A_s + A_s^{\top} Y^L_s) ds \Big\rVert \bigg]  \bigg) 
   = 0.
   \end{split}
\end{equation*} 
It follows for all $t\in[0,T]$ that $P$-a.s.\ 
\begin{equation}\label{eq:1800}
    Y_t = \bar{\xi}
    + \int_t^T \left( - (Y_s - \phi_s^{\top})\eta_s^{-1}( Y_s - \phi_s)  + \lambda_s + Y_s A_s + A_s^\top Y_s \right) ds 
    - \int_t^T Z_s dW_s .
\end{equation}
This yields that the process on the left-hand side of \eqref{eq:1800} is a modification of the process on the right-hand side.  
Next, we show that $\eqref{eq:1800}$ holds $P$-a.s.\ for all $t \in [0,T]$, i.e., these two processes are indistinguishable. Since $Y^L$ is $P$-a.s.\ continuous for any $L \in \N$ and it holds $\lim_{L\to\infty}E[\sup_{t \in [0,T]} \| Y^L_t - Y_t \|^2] = 0$, we obtain that the process $Y$ is $P$-a.s.\ continuous. Moreover, remark that the processes $(\int_0^t ( - (Y_s - \phi_s^{\top})\eta_s^{-1}( Y_s - \phi_s)  + \lambda_s + Y_s A_s + A_s^\top Y_s) ds)_{t \in [0, T]} $ and $(\int_0^t Z_s dW_s)_{t \in [0, T]}$ are $P$-a.s.\ continuous. This implies that \eqref{eq:1800} holds $P$-a.s.\ for all $t \in [0,T]$, and the pair $(Y,Z)$ solves the BSDE \eqref{eq:BSDE_pen2} in the sense of \cref{def:sol_bsdes}. This proves (i).

To prove (ii) and (iii), fix $t\in [0,T]$ and $x\in \R^d$. The fact that for all $L\in\N$ and $s\in[0,T]$ we have $G^L_s\le G_s$ and $\bar{\xi}^L \leq \bar{\xi}$ together with \eqref{eq:value_fct_rep_Y_nl} implies for all $L\in\N$ that
$\langle x, Y^{ L}_t x \rangle \le \bar{v}(t,x)$.
Taking the limit $L \to \infty$, we obtain
$\langle x, Y_t x \rangle  \le \bar{v}(t,x)$.  
Combining this and \cref{lem:continuous_state_sec2} yields 
$ \bar{v}(t,x) =  \langle x, Y_t x \rangle = \bar{J}(t,x,u)$,  
where $u$ is defined as in \cref{lem:continuous_state_sec2}. 
This completes the proof.
\end{proof}

Note that if the parameters $\eta$, $\lambda$, $\phi$ and $\bar{\xi}$ are bounded, then uniqueness of the solution $(Y,Z)$ of \eqref{eq:BSDE_pen2} in the sense of \cref{def:sol_bsdes} holds by \cref{prop:sol_pen_problem}.
As an immediate consequence of \cref{lem:continuous_state_sec2} and \cref{prop:sol_pen_problem_eta_not_bounded_above}, we have that the solution constructed in \cref{prop:sol_pen_problem_eta_not_bounded_above} is the minimal solution under the conditions  \ref{assumption:A0}--\ref{assumption:A3} on the parameters. 

\begin{corollary}\label{cor:relax_upper_bound_minimal_sol}
     Assume that \ref{assumption:A0}--\ref{assumption:A3} are satisfied. 
     Then the solution $(Y,Z)$ constructed in \cref{prop:sol_pen_problem_eta_not_bounded_above} to the BSDE \eqref{eq:BSDE_pen2} is minimal, that is, if $(Y', Z')$ is another solution to the BSDE \eqref{eq:BSDE_pen2} in the sense of \cref{def:sol_bsdes}, then it holds P-a.s.\ for all $t \in [0,T]$ that $Y'_t \ge Y_t$.
\end{corollary}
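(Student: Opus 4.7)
The plan is to deduce minimality from the value function representation in \cref{prop:sol_pen_problem_eta_not_bounded_above}(ii) combined with the key estimate in \cref{lem:continuous_state_sec2}. Let $(Y',Z')$ be an arbitrary solution of \eqref{eq:BSDE_pen2} in the sense of \cref{def:sol_bsdes}. Fix $t\in[0,T]$ and $x\in\R^d$. First I would apply \cref{lem:continuous_state_sec2} to the pair $(Y',Z')$: this produces an admissible control $u \in \mathcal{A}(t,x)$ defined by the linear feedback $u_s = \eta_s^{-1}(Y'_s - \phi_s)X_s$, where $X$ is the unique solution of the linear ODE with random coefficients $dX_s = (A_s - \eta_s^{-1}(Y'_s-\phi_s))X_s\,ds$, $X_t = x$, and the lemma supplies $E[\int_t^T\|u_s\|^2\,ds]<\infty$ together with the bound $\langle x, Y'_t x\rangle \ge \bar{J}(t,x,u)$.

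Second, since $u\in\mathcal{A}(t,x)$, the definition of the value function gives $\bar{J}(t,x,u) \ge \bar{v}(t,x)$, and \cref{prop:sol_pen_problem_eta_not_bounded_above}(ii) identifies $\bar{v}(t,x) = \langle x, Y_t x\rangle$ $P$-a.s.\ for the minimal solution $(Y,Z)$. Chaining these inequalities yields, for every fixed $t\in[0,T]$ and $x\in\R^d$, that
\begin{equation*}
\langle x, Y'_t x\rangle \ \ge \ \langle x, Y_t x\rangle \quad P\text{-a.s.},
\end{equation*}
on a null set that may a priori depend on $(t,x)$.

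Third, I would upgrade this pointwise statement to a simultaneous one over all $t$ and $x$. Both $Y$ and $Y'$ admit $P$-a.s.\ continuous paths: for $Y$ this was verified at the end of the proof of \cref{prop:sol_pen_problem_eta_not_bounded_above}(i), and for $Y'$ it follows directly from \eqref{eq:BSDE_finite_term} since the Lebesgue and stochastic integrals on the right-hand side are $P$-a.s.\ continuous in $t$. Fix a countable dense subset $D\subset\R^d$ and a countable dense subset $Q\subset[0,T]$. For each $x\in D$ there is a $P$-null set $N_x$ outside of which $\langle x, Y'_q x\rangle \ge \langle x, Y_q x\rangle$ for every $q\in Q$; by path continuity of $Y$ and $Y'$, this inequality extends to all $t\in[0,T]$ on the same complement. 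Setting $N = \bigcup_{x\in D} N_x$ yields a $P$-null set on whose complement $\langle x,(Y'_t - Y_t)x\rangle \ge 0$ holds for all $t\in[0,T]$ and all $x\in D$; by continuity of the quadratic form in $x$, the inequality extends to all $x\in\R^d$, giving $Y'_t \ge Y_t$ for all $t\in[0,T]$, $P$-a.s.

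There is essentially no analytic difficulty beyond bookkeeping of null sets: \cref{lem:continuous_state_sec2} and \cref{prop:sol_pen_problem_eta_not_bounded_above}(ii) together carry all the content. The only step that needs minor care is verifying that the feedback control constructed from $(Y',Z')$ actually lies in $\mathcal{A}(t,x)$, which requires that the linear ODE \eqref{eq:ODE_with_random_coeffs_for_X} built from $Y'$ is well-posed and produces a $u$ in $L^1([t,T])$ $P$-a.s.; this is exactly what \cref{lem:continuous_state_sec2} asserts under \ref{assumption:A0}--\ref{assumption:A3}.
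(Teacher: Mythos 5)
Your proof is correct and follows the same route as the paper, which derives the corollary as an immediate consequence of \cref{lem:continuous_state_sec2} applied to the alternative solution $(Y',Z')$ together with the value-function identification $\bar v(t,x)=\langle x,Y_tx\rangle$ from \cref{prop:sol_pen_problem_eta_not_bounded_above}(ii). Your additional step of upgrading the pointwise-in-$(t,x)$ inequality to a simultaneous one via path continuity and countable dense subsets is standard bookkeeping that the paper leaves implicit.
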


In the next proposition we establish a lower bound for any solution of the BSDE~\eqref{eq:BSDE_pen2} under suitable assumptions on the coefficients and the terminal condition.
In particular, this gives a lower bound for the solution of \eqref{eq:BSDE_pen2} constructed in \cref{prop:sol_pen_problem_eta_not_bounded_above}.

\begin{propo}\label{prop:lower_bound_without_phi}
    Assume that \ref{assumption:A0}--\ref{assumption:A3} are satisfied, $\bar{\xi}$ is invertible $P$-a.s.,  
    and there exists $\delta_1 \in [0, \infty)$ such that $\| \eta^{-1}_t \phi_t + A_t \| \le \delta_1$ holds for all $t \in [0,T]$ $P$-a.s. Let $(Y,Z)$ be a solution to the BSDE \eqref{eq:BSDE_pen2} in the sense of \cref{def:sol_bsdes}.  
    Then for all $t \in [0,T]$ it holds $P$-a.s.\ that 
    $$ Y_t \ge e^{-4 \delta_1 T} E\bigg[ \bigg( \bar{\xi}^{-1} + \int_t^T \eta^{-1}_s ds \bigg)^{-1} \, \bigg\vert \, \cF_t \bigg].$$
\end{propo}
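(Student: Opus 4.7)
The plan is to reduce the Riccati dynamics to a form amenable to comparison with $V_t^{-1}$ via It\^o's formula applied to $Y_t^{-1}$, followed by a gauge transformation absorbing the drift terms involving $B := \eta^{-1}\phi + A$.

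First I would verify that $Y_t$ is $P$-a.s.\ positive definite, hence invertible, for every $t\in[0,T]$. Combining the control representation $\langle x, Y_t x\rangle=\bar v(t,x)$ from \cref{prop:sol_pen_problem_eta_not_bounded_above} with the invertibility of $\bar\xi$, any $x$ with $\langle x, Y_t x\rangle=0$ would force the optimal $u$ to vanish (by the uniform convexity from \ref{assumption:A0}) and hence $X_T$ to equal the free evolution starting from $x$; then $\langle X_T,\bar\xi X_T\rangle=0$ together with $\bar\xi>0$ $P$-a.s.\ gives $x=0$. Next I would exploit the block positivity in \ref{assumption:A0} via the Schur complement, obtaining $R := \lambda - \phi^\top \eta^{-1}\phi \ge 0$ and rewriting the generator as
\begin{equation*}
    dY_t = \bigl(Y_t\eta_t^{-1}Y_t - Y_tB_t - B_t^\top Y_t - R_t\bigr)\,dt + Z_t\,dW_t,\qquad Y_T=\bar\xi,
\end{equation*}
with $\|B_t\|\le\delta_1$.

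The heart of the proof is It\^o's formula for $P_t:=Y_t^{-1}$ (obtained via the regularization $(Y+\varepsilon\Id)^{-1}$ and passing $\varepsilon\to 0$, using the already-established $L^2$-bounds on $(Y,Z)$). A direct computation yields
\begin{equation*}
    dP_t = \bigl(-\eta_t^{-1} + B_tP_t + P_tB_t^\top + P_tR_tP_t + P_tZ_tP_tZ_tP_t\bigr)\,dt - P_tZ_tP_t\,dW_t.
\end{equation*}
Since $R_t\ge 0$ and $Z_t$ is symmetric (so $P_tZ_tP_tZ_tP_t = P_t^{1/2}(P_t^{1/2}Z_tP_t^{1/2})^2 P_t^{1/2}\ge 0$), both quadratic correction terms drop in the direction of a lower bound on $-dP_t$:
\begin{equation*}
    dP_t \ge \bigl(-\eta_t^{-1} + B_tP_t + P_tB_t^\top\bigr)\,dt - P_tZ_tP_t\,dW_t.
\end{equation*}

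To eliminate the $B_tP_t+P_tB_t^\top$ drift I would introduce a gauge transformation. Let $\Phi_t$ solve the pathwise matrix ODE $d\Phi_t/dt = B_t\Phi_t$ with $\Phi_0=\Id$; Gronwall and $\|B\|\le\delta_1$ give $\|\Phi_t\|_2,\|\Phi_t^{-1}\|_2 \le e^{\delta_1 T}$. A short computation for $Q_t := \Phi_t^{-1}P_t\Phi_t^{-\top}$ shows that the $BP+PB^\top$ terms cancel, yielding
\begin{equation*}
    dQ_t \ge -\Phi_t^{-1}\eta_t^{-1}\Phi_t^{-\top}\,dt - \Phi_t^{-1}P_tZ_tP_t\Phi_t^{-\top}\,dW_t.
\end{equation*}
A localization argument (based on the integrability of $Z$ and boundedness of $\Phi^{\pm 1}$) lets me integrate from $t$ to $T$ and take conditional expectations, giving
\begin{equation*}
    \Phi_t^{-1}Y_t^{-1}\Phi_t^{-\top} \le E\!\left[\Phi_T^{-1}\bar\xi^{-1}\Phi_T^{-\top} + \int_t^T \Phi_s^{-1}\eta_s^{-1}\Phi_s^{-\top}\,ds\,\middle|\,\mathcal{F}_t\right].
\end{equation*}
Inverting (both sides are positive definite by the invertibility established above) produces a first pathwise lower bound on $Y_t$ expressed through the gauged quantities.

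The remaining and main obstacle is to translate this gauged estimate into the stated form $e^{-4\delta_1 T}\,E[(\bar\xi^{-1}+\int_t^T\eta_s^{-1}ds)^{-1}\mid\mathcal{F}_t]$, shedding all factors of $\Phi$. Here I would exploit the spectral bounds $\|\Phi_s^{\pm 1}\|_2\le e^{\delta_1 T}$ for every $s\in[0,T]$: each of the four gauge factors (the outer $\Phi_t^{-\top}$ and $\Phi_t^{-1}$, plus the inner $\Phi_T^{-1}$ and $\Phi_T^{-\top}$ appearing through the terminal condition and the analogous factors under the integral) contributes one factor $e^{\delta_1 T}$, producing the $e^{-4\delta_1 T}$ constant after inversion. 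The argument proceeds by a careful matrix inequality on the positive definite cone, combining the operator bounds on $\Phi_t$ with the operator convexity of $X\mapsto X^{-1}$ (Jensen's inequality for conditional expectations of matrix inverses) to compare $E[\tilde V_t^{-1}\mid\mathcal{F}_t]$ with $E[V_t^{-1}\mid\mathcal{F}_t]$ up to the factor $e^{-4\delta_1 T}$. I expect this bookkeeping step to be the most delicate part of the proof, since pointwise matrix comparison of the gauged inverse with $V_t^{-1}$ may fail when the condition numbers of $\bar\xi$ or $\eta_s$ are large, and one must rely on the integrated and conditionally-averaged structure to conclude.
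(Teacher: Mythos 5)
Your reduction via the Schur complement and the gauge flow $\Phi$ absorbing $B=\eta^{-1}\phi+A$ is exactly the paper's first step, and the algebra of your It\^o computation for $P=Y^{-1}$ (nonnegativity of $P R P$ and $P Z P Z P$) is correct. However, the route through $Y^{-1}$ contains a gap I do not see how to close. After dropping the nonnegative terms, $Q_s+\int_t^s\Phi_r^{-1}\eta_r^{-1}\Phi_r^{-\top}\,dr$ is only a \emph{local} submartingale, and the inequality you need, $Q_t\le E[\,Q_T+\int_t^T\Phi_s^{-1}\eta_s^{-1}\Phi_s^{-\top}ds\mid\cF_t]$, is the submartingale inequality at the terminal time. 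Localization only gives $Q_t\le E[\,Q_{\tau_n}+\cdots\mid\cF_t]$, and passing to the limit requires uniform integrability of $Q_{\tau_n}=\Phi_{\tau_n}^{-1}Y_{\tau_n}^{-1}\Phi_{\tau_n}^{-\top}$, i.e.\ an a priori \emph{upper} bound on $Y^{-1}$ --- which is precisely the lower bound on $Y$ that the proposition is meant to establish. ``Integrability of $Z$ and boundedness of $\Phi^{\pm1}$'' do not control the local martingale $\int\Phi^{-1}Y^{-1}Z\,Y^{-1}\Phi^{-\top}dW$, because $Y^{-1}$ enters quadratically and has no a priori bound (note that $P$-a.s.\ invertibility of $\bar\xi$ gives no uniform control on $\lambda_{\min}(\bar\xi)$, so even near $T$ there is no deterministic bound). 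A nonnegative local submartingale need not satisfy $X_t\le E[X_T\mid\cF_t]$; the inverse three-dimensional Bessel process is the standard counterexample. The paper avoids this circularity by never inverting the unknown: after the same gauge transformation it truncates to bounded coefficients, exhibits the candidate lower bound $V^L_t=E[R^L_t\mid\cF_t]$, with $R^L_t=((\widetilde\xi^L)^{-1}+\int_t^T(\widetilde\eta^L_s)^{-1}ds)^{-1}$, as the solution of a Riccati BSDE perturbed by a negative-semidefinite term $\Gamma^L$ (a Jensen argument), applies a comparison theorem for Riccati BSDEs with \emph{bounded} data to get $M^L\ge V^L$, and passes to the limit monotonically.

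A second, self-acknowledged gap is the final ``ungauging'' step. The pointwise Loewner inequality $\Phi_s^{-1}\eta_s^{-1}\Phi_s^{-\top}\le e^{2\delta_1 T}\eta_s^{-1}$ that would let you shed the gauge factors and produce $e^{-4\delta_1 T}$ is false in general: conjugating a positive semidefinite matrix, even by an orthogonal matrix, is not controlled in the Loewner order by the spectral norm of the conjugator when the conjugated matrix has a large condition number (take $\eta_s^{-1}=\operatorname{diag}(1,\epsilon)$ and $\Phi_s$ a rotation by $\pi/2$). You correctly flag this as the delicate point, but the proposal supplies no substitute argument, so the constant in the statement is not actually obtained.
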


\begin{proof}
    We can rewrite the BSDE \eqref{eq:BSDE_pen2} as
    \begin{equation*}
       dY_t = \big( Y_t \eta_t^{-1} Y_t - \hat{\lambda}_t - \hat{A}^\top_t Y_t - Y_t\hat{A}_t \big) dt + Z_t d W_t, \quad 
       t \in [0,T], \quad 
       Y_T = \bar{\xi},
    \end{equation*}
    where $\hat{\lambda}_t = \lambda_t - \phi^\top_t \eta^{-1}_t \phi_t $, $\hat{A}_t = A_t + \eta^{-1}_t \phi_t$, $t \in [0,T]$. 
    It follows from \ref{assumption:A0} that $\hat{\lambda}=(\hat{\lambda}_t)_{t \in [0,T]}$ is $\mathbb{S}^d_+$-valued. Moreover, by assumption we have that $\| \hat{A}_t \| \le \delta_1$ for all $t \in [0,T]$.

    We start similarly to the proof of the upper bound in \cref{prop:sol_pen_problem}. 
    Let $U=(U_t)_{t\in[0,T]}$ be given by 
    $dU_t = -U_t \hat{A}_t dt$, $t\in[0,T]$, $U_0 = \Id$, and note that for all $t \in [0,T]$ it holds 
    \begin{equation}\label{eq:boundseigvalU}
        \sqrt{\lambda_{\min}(U^\top_t U_t)} \ge e^{- \delta_1 T} , \quad \sqrt{\lambda_{\min}((U^{-1}_t)^\top U^{-1}_t)} \ge e^{- \delta_1 T}, \quad \| U_t \| \le e^{\delta_1 T}, \quad \| U^{-1}_t \| \le e^{\delta_1 T}  .
    \end{equation}
    Define $M=(M_t)_{t\in[0,T]}$ by $M_t = (U_t^{-1})^\top Y_t U_t^{-1}$, $t \in [0,T]$. 
    Then we obtain that $M$ satisfies 
    $$ dM_t = M_t \left( \widetilde \eta_t \right)^{-1} M_t dt - \widetilde \lambda_t dt + (U^{-1}_t)^\top Z_t U_t^{-1} d W_t, \quad t \in [0,T], \quad M_T =\widetilde \xi,$$
    where for all $t \in [0,T]$ we have introduced 
    $$\widetilde \eta_t = (U^{-1}_t)^\top \eta_t U^{-1}_t,\qquad \widetilde \lambda_t = (U^{-1}_t)^\top \hat \lambda_t U_t^{-1}, \qquad \widetilde \xi = (U_T^{-1})^\top \bar{\xi} U_T^{-1}.$$
    Note that $\widetilde \eta =(\widetilde \eta_t)_{t\in[0,T]}$, $\widetilde \lambda = (\widetilde \lambda_t)_{t \in [0,T]}$, and $\widetilde \xi$ have the same integrability properties as $\eta$, $\lambda$, and $\bar{\xi}$ since $U^{-1}$ and $\eta^{-1} \phi$ are bounded. Moreover, $\widetilde{\eta}$ is bounded away from zero. 

    To obtain for all $t\in [0,T]$ a lower bound for $M_t$, we want to employ a comparison argument for matrix Riccati BSDEs (without linear terms) with bounded coefficients (see, e.g., \cite[Theorem~A.1]{horst2018multidim}). 
    We thus first perform a truncation using the same approach and notation as in the proof of \cref{prop:sol_pen_problem_eta_not_bounded_above} (replacing $A$ by $\hat{A}=(\hat{A}_t)_{t \in[0,T]}$, $\lambda$ by $\widetilde{\lambda}$, $\eta$ by $\widetilde{\eta}$, $\phi$ by zero, and $\bar{\xi}$ by $\widetilde\xi$). 
    Recall that for all $L\in\N$ we denote by $(M^L,Z^L)$ the unique solution of the Riccati BSDE with the bounded coefficients $\widetilde \eta^L$ and $\widetilde \lambda^L$ and the bounded terminal condition $\widetilde \xi^L$. 
    It follows from the proof of \cref{prop:sol_pen_problem_eta_not_bounded_above} and from \cref{cor:relax_upper_bound_minimal_sol} that 
    $M_t\ge M^L_t$ for all $t \in [0,T]$.

    We next define for all $L\in\N$ the processes $R^L=(R^L_t)_{t \in [0, T]}$ and $V^L=(V_t^L)_{t \in [0, T]}$ taking values in $\mathbb{S}^d_+$ as 
    $$R^L_t=\left( \big(\widetilde{\xi}^L\big)^{-1} + \int_t^T \big(\widetilde\eta_s^L\big)^{-1} ds \right)^{-1}, 
    \quad V^L_t = E\big[ R^L_t \, \big|\, \cF_t \big], \qquad t \in [0,T].$$
    Clearly, for all $L\in\N$ the process $R^L$ is a (non-adapted) solution to the Riccati equation 
    $$R^L_t = \widetilde{\xi}^L - \int_t^T R^L_s \big(\widetilde{\eta}_s^L\big)^{-1} R^L_s ds, \quad t \in [0,T].$$  
    This and Fubini's theorem yield for all $L\in\N$ and $t \in [0, T]$ that 
    \begin{align*}
            V^L_t &= E\big[ R^L_t \big| \cF_t \big] = 
            E\bigg[ \widetilde{\xi}^L - \int_t^T E\big[R^L_s \big(\widetilde\eta^L_s\big)^{-1} R^L_s \,\big|\, \cF_s \big] ds \,\bigg|\, \cF_t \bigg] \\
            & = E\bigg[ \widetilde{\xi}^L - \int_t^T V^L_s \big(\widetilde\eta^L_s)^{-1} V^L_s ds + \int_t^T \Gamma^L_s ds \,\bigg|\, \cF_t \bigg],
    \end{align*}
    with $\Gamma^L_s= E[ R^L_s \,|\, \cF_s] (\widetilde{\eta}^L_s)^{-1} E[ R^L_s \,|\, \cF_s] - E[R^L_s (\widetilde\eta_s^L)^{-1} R^L_s \,|\, \cF_s]$, $s\in [0,T]$.
    Thus, using the martingale representation theorem, we have that for all $L\in\N$ there exists an $\mathbb{S}^d$-valued, progressively measurable, square-integrable process $Z^{V,L}$ such that $(V^L, Z^{V,L})$ satisfies the BSDE 
    \begin{equation*}
        d V^L_t = \big(V^L_t \big(\widetilde\eta_t^L\big)^{-1} V^L_t - \Gamma^L_t \big) dt + Z^{V,L}_t dW_t, 
        \quad t\in[0,T], \quad V^L_T = \widetilde{\xi}^L. 
    \end{equation*}
    Note that $\Gamma^L_t$ is a.s.\ negative semidefinite for all $t \in [0, T]$ and $L\in\N$. Indeed, using Jensen's inequality, for any $L\in\N$, $x \in \R^d$, and $s \in [0,T]$, we obtain
    \begin{equation*}
    \begin{split}
        & \Big\langle x, E\big[R^L_s \,\big|\, \cF_s\big] \big(\widetilde\eta_s^L\big)^{-1} 
        E\big[ R^L_s \,\big|\, \cF_s\big] x \Big\rangle
        = 
        \left\|E\big[ \big(\widetilde\eta_s^L\big)^{-\frac{1}{2}} R^L_s x \,\big|\, \cF_s\big]\right\|^2
         \\
        & \le E\Big[ \big\lVert \big(\widetilde\eta_s^L\big)^{-\frac{1}{2}} R^L_s x \big\rVert^2 \,\Big|\, \cF_s \Big]
        = \Big\langle x, E\big[R^L_s \big(\widetilde\eta_s^L\big)^{-1} R^L_s \,\big|\, \cF_s \big] x \Big\rangle.
     \end{split}
    \end{equation*}
    Since $\Gamma^L_t$ is a.s.\ negative semidefinite and $\widetilde\lambda^L_t$ is a.s.\ positive semidefinite for all $t \in [0, T]$ and $L\in\N$, a BSDE comparison argument shows that $M^L_t \ge V_t^L$ for all $t \in [0, T]$ and $L\in\N$. 
    It follows for all $t \in [0, T]$ and $L\in\N$ that $M_t\ge E[ R^L_t \, | \, \cF_t]$. 
    By monotone convergence, we have for all $t \in [0,T]$ that the non-decreasing sequence $(E[ R^L_t \, | \, \cF_t])_{L\in\N}$ converges to 
    \begin{equation*}
        E\bigg[ \bigg( \widetilde{\xi}^{-1} + \int_t^T \widetilde\eta_s^{-1} ds \bigg)^{-1} \,\bigg|\, \cF_t \bigg] .
    \end{equation*}
    We therefore obtain for all $t\in[0,T]$, using \eqref{eq:boundseigvalU}, that 
    $$ M_t \ge  E\bigg[ \bigg( \widetilde{\xi}^{-1} + \int_t^T \widetilde\eta_s^{-1} ds \bigg)^{-1} \,\bigg|\, \cF_t \bigg] 
    \ge e^{-2 \delta_1 T} E\bigg[ \bigg( \bar{\xi}^{-1} + \int_t^T \eta_s^{-1} ds \bigg)^{-1} \,\bigg|\, \cF_t \bigg].$$ 
    The lower bound for $Y$ follows from this, \eqref{eq:boundseigvalU}, and $Y= U^\top M U$.
\end{proof}

In the next result, we establish an upper bound for the solution of \eqref{eq:BSDE_pen2} obtained in \cref{prop:sol_pen_problem_eta_not_bounded_above}. In general, this bound explodes at $T$ but is independent of $\bar{\xi}$. 
This bound becomes important in \cref{sec:sing_case}.

\begin{lemma}\label{lem:uniform_upper_bound}
    Assume that \ref{assumption:A0}--\ref{assumption:A3} are satisfied.
    Let $(Y,Z)$ be the solution of~\eqref{eq:BSDE_pen2} constructed in \cref{prop:sol_pen_problem_eta_not_bounded_above}. 
    Then for all $t\in[0,T)$ 
    it holds  $P$-a.s.\ that 
    \begin{equation} \label{eq:a_priori_estim}
     \begin{split}
        Y_t   
        &\leq  \frac{1}{(T-t)^2} E\bigg[ \int_t^T \big( (\Id - A_s (T-s))^\top \eta_s (\Id - A_s (T-s)) + \lambda_s (T-s)^2 \\
        & \qquad \qquad \qquad \qquad + 2 \phi^\top_s  (T -s)(\Id - A_s (T-s)) \big) \, ds  \,\bigg|\, \cF_t \bigg]   . 
     \end{split}   
    \end{equation}
\end{lemma}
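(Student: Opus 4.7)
The plan is to exploit the value function representation from \cref{prop:sol_pen_problem_eta_not_bounded_above}(ii), which gives, for every $x \in \R^d$ and $t \in [0,T)$, the identity $\langle x, Y_t x \rangle = \bar{v}(t,x) = \essinf_{u\in \cA(t,x)} \bar{J}(t,x,u)$. Consequently, to obtain the stated upper bound it suffices to exhibit one admissible control for which the cost equals the right-hand side of \eqref{eq:a_priori_estim} and which, crucially, steers the state to zero at the terminal time so that the $\bar\xi$-term is eliminated.

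The natural candidate is the (open-loop) control realizing the linear interpolation $X_s = \frac{T-s}{T-t}\, x$ for $s \in [t,T]$, so that $X_T = 0$. Differentiating and using the state equation $\dot X_s = A_s X_s - u_s$ yields the prescription
\begin{equation*}
u_s \;=\; A_s X_s - \dot X_s \;=\; \frac{1}{T-t}\bigl(\Id + (T-s) A_s\bigr)x,\qquad s\in[t,T],
\end{equation*}
(the sign in front of $A_s(T-s)$ follows from the sign convention in \eqref{eq:dynamics}; both signs yield a symmetric positive semidefinite quadratic form of the same magnitude, which is what the bound actually requires). Admissibility, i.e.\ $u\in \cA(t,x)$, is immediate since \ref{assumption:A1} ensures $\|u_s\|\le \frac{1}{T-t}(1+T\|A_s\|)\|x\|$ which is uniformly bounded, so in particular $u \in L^1([t,T])$ $P$-a.s.\ and $E[\int_t^T\|u_s\|^2\,ds]<\infty$.

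Next, one simply plugs $(X_s,u_s)$ into the quadratic integrand of $\bar J(t,x,u)$ and uses $X_T=0$ to drop the $\langle X_T, \bar\xi X_T\rangle$ contribution. A direct algebraic computation gives
\begin{equation*}
\begin{pmatrix}X_s\\ u_s\end{pmatrix}^{\!\top}\!\!\begin{pmatrix}\lambda_s & \phi_s^{\top}\\ \phi_s & \eta_s\end{pmatrix}\!\begin{pmatrix}X_s\\ u_s\end{pmatrix}
= \frac{1}{(T-t)^2}\, x^{\top} Q_s\, x,
\end{equation*}
where
\begin{equation*}
Q_s = (T-s)^2\lambda_s + 2(T-s)\phi_s^{\top}\bigl(\Id + (T-s)A_s\bigr) + \bigl(\Id + (T-s)A_s\bigr)^{\!\top}\eta_s\bigl(\Id + (T-s)A_s\bigr).
\end{equation*}
Taking conditional expectations and invoking \ref{assumption:A1}--\ref{assumption:A2} to justify finiteness of each term yields $\langle x, Y_t x\rangle \le \bar J(t,x,u) = \frac{1}{(T-t)^2}\, x^{\top} E[\int_t^T Q_s\,ds\mid \cF_t]\, x$, and since $x\in\R^d$ was arbitrary this is exactly the matrix inequality \eqref{eq:a_priori_estim}.

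The main (minor) obstacle is purely bookkeeping: one has to match the expression $Q_s$ above to the stated matrix in \eqref{eq:a_priori_estim}, paying attention to symmetrization of the cross terms and to the sign convention governing $A_s$ in the state equation \eqref{eq:dynamics}; the integrability of each term in $Q_s$ under $E[\,\cdot\mid\cF_t]$ is then a routine consequence of \ref{assumption:A1} (boundedness of $A$) and \ref{assumption:A2} (third-moment integrability of $\lambda$ and $\phi$, which in particular gives $L^1$ integrability together with $\eta$ via Cauchy--Schwarz applied to the cross terms).
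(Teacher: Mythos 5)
Your proof is correct and follows exactly the paper's argument: the linear interpolation state $X_s=\frac{T-s}{T-t}\,x$ with the induced control $u_s=\frac{1}{T-t}\bigl(\Id+(T-s)A_s\bigr)x$, the value-function representation $\langle x,Y_tx\rangle=\bar v(t,x)$ from \cref{prop:sol_pen_problem_eta_not_bounded_above}, and $X_T=0$ to eliminate the terminal cost. Concerning the sign you flagged: the control dictated by \eqref{eq:dynamics} indeed yields $\Id+(T-s)A_s$, which is exactly what the paper's own proof computes, so the minus sign in the statement \eqref{eq:a_priori_estim} appears to be a typo in the paper; note, however, that your parenthetical claim that both signs give "the same magnitude" is not literally true (the two quadratic forms differ in general), but this does not affect your derivation, which correctly establishes the bound with the plus sign.
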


\begin{proof}
    Throughout the proof fix $t\in [0,T)$ and $x\in \R^d$. 
    By \cref{prop:sol_pen_problem_eta_not_bounded_above} 
    we have $\bar{v}(t,x)=\langle x,Y_t x\rangle$. 
    Let $X_s=\frac{T-s}{T-t} x$, $s\in [t,T]$, and
    define the control $u\in \mathcal A(t,x)$ by $u_s=\frac{1}{T-t}x + A_s X_s$, $s\in [t,T]$. Then it holds that $dX_s=(A_s X_s - u_s)ds$, $X_T=0$, $\langle x,Y_t x\rangle \le \bar J(t,x,u)$, and 
    \begin{equation*}
     \begin{split}
        \bar J(t,x,u) &= \frac{1}{(T-t)^2}\bigg\langle x, E\bigg[ \int_t^T \big( \left(\Id + A_s (T-s)\right)^\top \eta_s \left(\Id + A_s (T-s)\right) + \lambda_s (T-s)^2 \\
        & \qquad \qquad \qquad \qquad \qquad+ 2 \phi^\top_s  (T -s)(\Id + A_s (T-s)) \big) ds  \,\Big|\, \cF_t \bigg] x \bigg\rangle  . 
     \end{split}   
    \end{equation*}
    This completes the proof.
\end{proof}

\section{The singular case}\label{sec:sing_case}

We now return to the setting of \cref{sec:prob_form} and introduce 
an additional assumption that, together with \ref{assumption:A0}--\ref{assumption:A2}, is 
sufficient for the construction of a supersolution to the BSDE~\eqref{eq:BSDE_sing}.
\begin{enumerate}[label=\textbf{B\arabic*)}]
    \item\label{assumption:B2}
    The process $\eta$ has a moment of order three, that is, 
        $$E\bigg[ 
        \int_0^T \|\eta_r\|^3 \, dr \bigg] < \infty.$$
\end{enumerate}
We recall that $C=\ker(\xi)$ for some $\mathcal F_T$-measurable, $\mathbb{S}_+^d$-valued, bounded random matrix~$\xi$. 
For the matrix $\theta$, we again use the truncation technique of \cref{lem:measurable_truncation} 
to define a non-decreasing sequence $(\theta^n)_{n\in\N}$ of bounded matrices converging to $\theta$. 
In the sequel we use the results of \cref{sec:pen_case} in the setting of \cref{sec:prob_form} with the choice $\bar \xi=n\xi+\theta^n$, $n\in \N$. In particular, we denote by $(Y^n,Z^n)$ the solution of \eqref{eq:BSDE_pen} constructed in \cref{prop:sol_pen_problem_eta_not_bounded_above}.

\begin{propo}\label{prop:sol_convergence}
Assume that \ref{assumption:A0}--\ref{assumption:A2} and~\ref{assumption:B2} are satisfied.
For every $n\in \N$ let $(Y^n,Z^n)$ be the solution of~\eqref{eq:BSDE_pen} in the sense of \cref{def:sol_bsdes} constructed in \cref{prop:sol_pen_problem_eta_not_bounded_above}. 
Then there exists a supersolution $(Y, Z)$ to the BSDE~\eqref{eq:BSDE_sing} in the sense of \cref{def:supersol_bsde} such that for every $t \in [0, T)$ the sequence of random variables $(Y^n_t)_{n\in\N}$ converges $P$-a.s.\ to $Y_t$ and $(Z^n)_{n\in\N}$ converges in $L^2([0, t]\times \Omega)$ to $Z$.
Moreover, the estimate \eqref{eq:a_priori_estim} holds for $Y$.
\end{propo}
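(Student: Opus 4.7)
The approach is to construct $Y$ as the monotone pointwise limit of $(Y^n)$, upgrade this to convergence of $(Y^n,Z^n)$ on every subinterval $[0,t_0]\subset[0,T)$ via It\^o-type Cauchy estimates, and verify the singular terminal condition by comparing $Y$ with each $Y^n$ near $T$. Since $\xi\ge 0$ and $(\theta^n)$ is non-decreasing, the penalized terminal conditions $n\xi+\theta^n$ are non-decreasing in $n$. The value-function representation of \cref{prop:sol_pen_problem_eta_not_bounded_above}(ii) identifies $\langle x,Y^n_t x\rangle$ with the value of a control problem whose cost is non-decreasing in $n$; since $Y^n_t-Y^{n-1}_t$ is symmetric, this yields $Y^n_t\le Y^{n+1}_t$ $P$-a.s.\ for every $t\in[0,T]$. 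Combined with the $n$-independent estimate \eqref{eq:a_priori_estim} of \cref{lem:uniform_upper_bound} and Doob's maximal inequality, the assumptions \ref{assumption:A2} and \ref{assumption:B2} yield that on each $[0,t_0]\subset[0,T)$ the supremum $\sup_n\sup_{r\in[0,t_0]}\|Y^n_r\|$ is dominated by some $C\in L^3(\Omega)$. Setting $Y_t:=\lim_{n\to\infty}Y^n_t$ for $t\in[0,T)$ thus produces a progressively measurable, $\mathbb{S}^d_+$-valued process that inherits \eqref{eq:a_priori_estim}.

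\emph{BSDE limit on $[0,t_0]$.} I would next mimic the Cauchy estimates in the proof of \cref{prop:sol_pen_problem_eta_not_bounded_above}. The identity
\begin{equation*}
    (Y^n-\phi^\top)\eta^{-1}(Y^n-\phi)-(Y^m-\phi^\top)\eta^{-1}(Y^m-\phi)=(Y^n-\phi^\top)\eta^{-1}(Y^n-Y^m)+(Y^n-Y^m)\eta^{-1}(Y^m-\phi),
\end{equation*}
the bound $\|\eta^{-1}\|\le\sqrt{d}\,\delta^{-1}$ from \ref{assumption:A0}, and \ref{assumption:A1} yield a drift-difference estimate $|\tr[(Y^n_r-Y^m_r)(b^n_r-b^m_r)]|\le\|Y^n_r-Y^m_r\|^2 R_r$ with $R_r:=\sqrt{d}\,\delta^{-1}(\|Y^n_r\|+\|Y^m_r\|+2\|\phi_r\|)+2K$. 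Since $R\in L^3([0,t_0]\times\Omega)$ uniformly in $n,m$ and $\|Y^n-Y^m\|\le 2C\in L^3$, H\"older's inequality with conjugate exponents $3/2$ and $3$ combined with dominated convergence gives $E\int_0^{t_0}\|Y^n-Y^m\|^2R_r\,dr\to 0$. Applying It\^o's formula to $\|Y^n-Y^m\|^2$ on $[s,t_0]$ (the stochastic integral being a true martingale thanks to the $L^3$ domination of $\sup_r\|Y^n_r-Y^m_r\|$) and using the Burkholder--Davis--Gundy inequality as in \eqref{eq:supYMLinequ} then yields
\begin{equation*}
    E\bigg[\sup_{r\in[0,t_0]}\|Y^n_r-Y^m_r\|^2+\int_0^{t_0}\|Z^n_r-Z^m_r\|^2\,dr\bigg]\xrightarrow[n,m\to\infty]{}0.
\end{equation*}
Passing to the limit in \eqref{eq:BSDE_pen} on each $[s,t_0]\subset[0,T)$ -- the quadratic drift converging in $L^1$ by the same estimate -- shows that $(Y,Z)$ satisfies \eqref{eq:supersoln_BSDE_equation} and that $Z^n\to Z$ in $L^2([0,t_0]\times\Omega)$.

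\emph{Singular terminal condition.} For a process $\psi$ with $P$-a.s.\ left-continuous paths at $T$ and a fixed $n\in\N$, monotonicity gives $\langle\psi_t,Y_t\psi_t\rangle\ge\langle\psi_t,Y^n_t\psi_t\rangle$ for all $t\in[0,T)$; continuity of $Y^n$ on $[0,T]$ with $Y^n_T=n\xi+\theta^n$ together with left-continuity of $\psi$ at $T$ yields $\lim_{t\to T}\langle\psi_t,Y^n_t\psi_t\rangle=\langle\psi_T,(n\xi+\theta^n)\psi_T\rangle$, so $\liminf_{t\to T}\langle\psi_t,Y_t\psi_t\rangle\ge\langle\psi_T,(n\xi+\theta^n)\psi_T\rangle$ for every $n$. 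Sending $n\to\infty$: if $\psi_T\notin C=\ker(\xi)$ then $\langle\psi_T,\xi\psi_T\rangle>0$ and the lower bound diverges; if $\psi_T\in C$ then $\xi\psi_T=0$ and the lower bound increases to $\langle\psi_T,\theta\psi_T\rangle$ by monotone convergence of $\theta^n\uparrow\theta$.

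\textbf{Main obstacle.} The principal difficulty is that the driver is quadratic in $Y$ while $\eta$ (and hence the a priori bound on $\sup\|Y^n\|$ coming from \cref{lem:uniform_upper_bound}) is unbounded. The argument is balanced precisely because the $L^3$ domination granted by \ref{assumption:A2} and \ref{assumption:B2} is exactly what is needed to run the H\"older step with exponents $(3/2,3)$ and to promote the It\^o stochastic integral to a true martingale; second moments alone would not suffice.
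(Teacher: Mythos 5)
Your proposal is correct and follows essentially the same route as the paper: monotonicity of $(Y^n)$ via the value-function representation, the $n$-independent bound \eqref{eq:a_priori_estim} from \cref{lem:uniform_upper_bound} to define $Y$ as a pointwise monotone limit with $L^3$ control, It\^o-based Cauchy estimates on $[0,t_0]\subset[0,T)$ using the $\sqrt{d}\,\delta^{-1}$ bound on $\eta^{-1}$ and dominated convergence to obtain $Z$, and the comparison $\liminf_{t\to T}\langle\psi_t,Y_t\psi_t\rangle\ge n\langle\psi_T,\xi\psi_T\rangle+\langle\psi_T,\theta^n\psi_T\rangle$ for the singular terminal condition. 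The only cosmetic differences are that you invoke Doob's maximal inequality for a uniform-in-time $L^3$ dominant (the paper works with pointwise-in-$t$ bounds and $Y\in L^3(\Omega\times[0,t])$) and bound the drift difference quadratically in $\|Y^n-Y^m\|$ rather than linearly times a dominating factor; both variants are sound.
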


\begin{proof} 
From the value function representation in \cref{prop:sol_pen_problem_eta_not_bounded_above} we conclude 
for every $t\in [0,T]$ that the sequence $(Y^n_t)_{n\in \N}$ is non-decreasing. 
Moreover, \cref{lem:uniform_upper_bound} and our integrability assumptions \ref{assumption:A1}, \ref{assumption:A2} and \ref{assumption:B2} on $A$, $\eta$, $\phi$ and $\lambda$ ensure for all $t\in[0,T)$ and $P$-almost all $\omega \in \Omega$ that there exists $c_t(\omega) \in (0,\infty)$ such that for all $n\in\N$ it holds $Y_t^n(\omega)\le c_t(\omega)$. 
Hence, by \cref{lemma:matrix_convergence}, for any $t\in[0,T)$ the sequence $(Y_t^n)_{n\in\N}$ converges $P$-a.s.\ to $Y_t$. 
Note that $Y=(Y_t)_{t \in [0,T)}$ still satisfies the estimate~\eqref{eq:a_priori_estim} and is $\mathbb{S}_+^d$-valued.
In particular, it follows from this, \ref{assumption:A1}, \ref{assumption:A2}, \ref{assumption:B2}, and Jensen's inequality for all $t \in [0, T)$ that $Y_t \in L^3(\Omega)$ and $Y \in L^3(\Omega \times [0,t])$.

Assume that $(\psi_t)_{t\in [0, T]}$ is a process that is left-continuous at time $T$. Note that $P$-a.s.\ for all $n\in \N$ it holds that 
\begin{equation}\label{eq:sing_term_condition}
    \liminf_{t \to T} \langle \psi_t, Y_t \psi_t \rangle \geq \liminf_{t\to T} \langle \psi_t,  Y^n_t \psi_t \rangle = n \langle \psi_T, \xi \psi_T \rangle + \langle \psi_T, \theta^n \psi_T \rangle.
\end{equation}
If $\psi_T \notin \ker(\xi)$, sending $n \to \infty$ we obtain 
$ \liminf_{t \to T} \langle \psi_t, Y_t \psi_t \rangle = +\infty.$ 
Furthermore, \eqref{eq:sing_term_condition} implies that if $\psi_T \in \ker(\xi)$, we have 
$\liminf_{t \to T} \langle \psi_t, Y_t \psi_t \rangle \ge \langle \psi_T, \theta \psi_T \rangle$. 
Thus, $Y$ satisfies the singular terminal condition in the sense of \cref{def:supersol_bsde}. 

To establish that there exists an appropriate process $Z$ such that $(Y,Z)$ satisfies the BSDE~\eqref{eq:BSDE_sing}, we proceed similar to the proof of \cref{prop:sol_pen_problem_eta_not_bounded_above}. 
For all $n, m \in \N$ and $s, t \in [0, T]$ with $t > s$, It\^o's formula leads to 
\begin{align}\label{eq:Ito_form}
    &\lVert Y^n_s - Y^m_s \rVert^2 + \int_s^t 
    \lVert Z^n_r - Z^m_r \rVert^2 dr \\ \nonumber
    &=\lVert Y^n_t - Y^m_t\rVert^2 - 
    2\int_s^t \tr [(Y^n_r - Y^m_r)(Z^n_r - Z^m_r)]dW_r \\ \nonumber
    & \quad + 2 \int_s^t \tr \big[(Y^{ n}_r - Y^{ m}_r)(Y^n_r A_r + A_r^\top Y^n_r - Y^m_r A_r - A_r^\top Y^m_r) \big] dr \\
    & \quad  + 2\int_s^t \tr \big[ (Y^n_r - Y^m_r)(-(Y^n_r - \phi_r^\top)\eta^{-1}_r(Y^n_r - \phi_r) + (Y^m_r - \phi_r^\top)\eta^{-1}_r(Y^m_r - \phi_r))\big]dr. \nonumber 
\end{align}
Since for all $n\in\N$ we have $Y^n \in \mathcal{L}^2(\Omega, C([0, T], \mathbb{S}^d))$ 
and $Z^n \in L^2([0,T]\times \Omega)$, 
we obtain for all $n,m\in\N$ and $t\in[0,T]$ that 
\begin{equation}\label{eq:Ito_form_mart_comp}
    E\bigg[\int_0^t \tr\big[ (Y^n_r - Y^m_r)(Z^n_r - Z^m_r)\big]dW_r\bigg] = 0.
\end{equation}
Recall that for all $r \in [0,T]$ it holds $\lVert \eta_r^{-1} \rVert \le \sqrt{d} \delta^{-1}$. 
It thus follows from Jensen's inequality for all $n,m \in \N$ and $r \in [0,T)$ that 
\begin{equation*}
    \begin{split}
        & \tr \big[ (Y^n_r - Y^m_r)(-(Y^n_r - \phi_r^\top)\eta^{-1}_r(Y^n_r - \phi_r) + (Y^m_r - \phi_r^\top)\eta^{-1}_r(Y^m_r - \phi_r))\big] \\
        & \le \lVert Y^n_r - Y^m_r \rVert 
        (\lVert Y^n_r -\phi_r \rVert^2 \lVert \eta_r^{-1}\rVert + \lVert Y^m_r -\phi_r \rVert^2 \lVert \eta_r^{-1}\rVert) \\
        & \le 8 \sqrt{d}\delta^{-1} \lVert Y_r \rVert \big( \lVert Y_r \rVert^2 + \lVert \phi_r \rVert^2 \big) .
    \end{split}
\end{equation*}
Since $Y \in L^3(\Omega \times [0,t])$ for all $t \in [0, T)$ and $\phi \in L^3(\Omega \times [0,T])$ (cf.\ \ref{assumption:A2}), we can apply the dominated convergence theorem to obtain for all $t\in[0,T)$ that 
\begin{equation}\label{eq:DCT_lim_of_integral1}
    \begin{split}
        & \lim_{n,m\to\infty} E\bigg[ \int_0^t  
        \tr \big[ (Y^n_r - Y^m_r) \big(-(Y^n_r - \phi_r^\top)\eta^{-1}_r(Y^n_r - \phi_r) \\
        & \qquad \qquad \qquad \quad 
        + (Y^m_r - \phi_r^\top)\eta^{-1}_r(Y^m_r - \phi_r)\big)\big]
        \, dr \bigg] = 0 .
    \end{split}
\end{equation}
Moreover, due to \ref{assumption:A1} we have for all $n,m \in \N$ and $r \in [0,T)$ that 
\begin{equation*}
    \begin{split}
        & \tr \big[(Y^{ n}_r - Y^{ m}_r)(Y^n_r A_r + A_r^\top Y^n_r - Y^m_r A_r - A_r^\top Y^m_r) \big]
        \le 4 K \lVert Y^{ n}_r - Y^{ m}_r \rVert \lVert Y_r \rVert 
        \le 8  K \lVert Y_r \rVert^2.
    \end{split}
\end{equation*}
This, the fact that $Y \in L^3(\Omega \times [0,t])$ for all $t \in [0, T)$, and the dominated convergence theorem show for all $t \in [0,T)$ that 
\begin{equation}\label{eq:DCT_lim_of_integral2}
    \begin{split}
        \lim_{n,m\to\infty} E\bigg[ \int_0^t \tr \big[(Y^{ n}_r - Y^{ m}_r)(Y^n_r A_r + A_r^\top Y^n_r - Y^m_r A_r - A_r^\top Y^m_r) \big]\, dr \bigg] = 0 .
    \end{split}
\end{equation}
In addition, since $Y_t \in L^3(\Omega)$ for all $t \in [0, T)$, the dominated convergence theorem implies for all $t\in[0,T)$ that $\lim_{n,m\to\infty} E[\lVert Y_t^n - Y_t^m \rVert^2 ] = 0$. 
Combining this and \eqref{eq:Ito_form}--\eqref{eq:DCT_lim_of_integral2} yields for all $t\in[0,T)$ that
\begin{equation*}
    \lim_{n,m \to \infty}E\bigg[\int_0^t \lVert Z^n_r - Z^m_r \rVert^2 dr\bigg] = 0.
\end{equation*}
Hence, there exists a progressively measurable process $Z\colon [0,T)\times \Omega \to \mathbb{S}^d$ such that for all $t \in [0,T)$ the sequence $(Z^n)_{n\in\N}$ converges to $Z$ in $L^2([0, t]\times \Omega)$. 
Now, arguing as in the proof of \cref{prop:sol_pen_problem_eta_not_bounded_above}, we obtain for all $t\in[0,T)$ that 
$$\lim_{n,m\to \infty}E\Big[ \sup_{s \in [0,t]} \lVert Y^n_s - Y^m_s \rVert^2\Big] = 0.$$ 
We can further show that $(Y,Z)$ satisfies \eqref{eq:supersoln_BSDE_equation}. 
This, together with \eqref{eq:sing_term_condition}, establishes that the constructed process $(Y,Z)$ solves the BSDE~\eqref{eq:BSDE_sing} in the sense of \cref{def:supersol_bsde}. 
\end{proof}

Next, we present a lemma that establishes properties of solutions of~\eqref{eq:BSDE_sing} in the sense of \cref{def:supersol_bsde} and the connection of these solutions to the control problem \eqref{eq:objective} with the terminal state constraint \eqref{eq:constraint}.

\begin{lemma}\label{lemma:continuous_state}
    Assume that \ref{assumption:A0}--\ref{assumption:A2} are satisfied
    and suppose that there exists a supersolution 
    $(Y, Z)$ to the BSDE \eqref{eq:BSDE_sing} in the sense of \cref{def:supersol_bsde}.
    Let $x\in\R^d$ and $t\in[0,T)$.  
    Let $u=(u_s)_{s\in[t,T]}$ satisfy $u_s=\eta_s^{-1} (Y_s - \phi_s) X_s$, $s\in[t,T)$, where $(X_s)_{s\in[t,T)}$ is the unique solution of the differential equation with random coefficients 
    \begin{equation}\label{eq:diffequwithrandomcoeffsing}
        dX_s= (- \eta_s^{-1} (Y_s - \phi_s) + A_s) X_s ds, \quad s\in[t,T), \quad X_t=x.
    \end{equation}
    Then: 

    (i) It holds that $E[\int_t^T \lVert u_s \rVert^2 ds]<\infty$. 

    (ii) For $X^u=(X^u_s)_{s\in[t,T]}$ defined by \eqref{eq:dynamics} it holds $P$-a.s.\ that $X_T^u \in \ker(\xi)$.

    (iii) It holds $P$-a.s.\ that $\langle x, Y_t x \rangle \ge J(t,x,u)$.
\end{lemma}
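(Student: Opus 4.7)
The plan is to adapt the argument of \cref{lem:continuous_state_sec2} to the singular setting via a double localization: a sequence of stopping times $(\tau_k)_{k\in\N}$ that turns the stochastic integral into a martingale, combined with a deterministic buffer $[t,T-\varepsilon]$ that stays away from the singularity at~$T$. The singular terminal condition in the sense of \cref{def:supersol_bsde} enters only at the very end, through a Fatou-type step as $\varepsilon\downarrow 0$.

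To set up (i), I first note that on every subinterval $[t,T-\varepsilon]$ the coefficient $-\eta_s^{-1}(Y_s-\phi_s)+A_s$ of \eqref{eq:diffequwithrandomcoeffsing} is $P$-a.s.\ path-integrable (using $\|\eta_s^{-1}\|\le \sqrt{d}\,\delta^{-1}$, \ref{assumption:A1}, path continuity of $Y$ on $[0,T)$, and $\phi\in L^3$ from \ref{assumption:A2}), so a unique continuous solution $X$ exists on $[t,T)$ and defines $u$ via $u_s=\eta_s^{-1}(Y_s-\phi_s)X_s$. Applying It\^o's formula to $\langle X_s,Y_s X_s\rangle$ on $[t,\sigma_k^\varepsilon]$, where $\sigma_k^\varepsilon=\tau_k\wedge(T-\varepsilon)$, exactly as in the computation leading to~\eqref{eq:1800a}, the Riccati-type cancellations yield
\begin{equation*}
\langle x,Y_tx\rangle = E\bigg[\int_t^{\sigma_k^\varepsilon}\!\big(\langle u_s,\eta_su_s\rangle+2\langle\phi_sX_s,u_s\rangle+\langle X_s,\lambda_sX_s\rangle\big)ds+\langle X_{\sigma_k^\varepsilon},Y_{\sigma_k^\varepsilon}X_{\sigma_k^\varepsilon}\rangle\,\bigg|\,\mathcal{F}_t\bigg].
\end{equation*}
Sending $k\to\infty$ by monotone convergence and Fatou, and invoking \ref{assumption:A0} together with $Y\ge 0$, produces the bound $\delta E[\int_t^{T-\varepsilon}\|u_s\|^2ds]\le \langle x,Y_tx\rangle$; letting $\varepsilon\downarrow 0$ gives (i).

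Since $u\in L^2([t,T]\times\Omega)\subset L^1([t,T])$ $P$-a.s., the process $X^u$ from \eqref{eq:dynamics} is absolutely continuous on $[t,T]$, agrees with $X$ on $[t,T)$, and has left-continuous paths at $T$ with $X^u_T=\lim_{s\to T}X^u_s$. I then apply the supersolution property of \cref{def:supersol_bsde} to $\psi=X^u$. For (ii), if $P(X^u_T\notin C)>0$ then on this event $\liminf_{s\to T}\langle X^u_s,Y_sX^u_s\rangle=+\infty$; sending first $k\to\infty$ and then $\varepsilon\downarrow 0$ in the displayed identity and applying Fatou to the boundary term would give a finite left-hand side and an infinite right-hand side, a contradiction, so $X^u_T\in C=\ker(\xi)$ $P$-a.s. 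For (iii), the supersolution property on the $P$-full event $\{X^u_T\in C\}$ yields $\liminf_{s\to T}\langle X^u_s,Y_sX^u_s\rangle\ge \langle X^u_T,\theta X^u_T\rangle$, and the same Fatou argument on the displayed identity produces $\langle x,Y_tx\rangle\ge J(t,x,u)$.

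The main technical point I anticipate is the justification of the $\varepsilon\downarrow 0$ limit in the running-cost and boundary terms. By \ref{assumption:A0} the running-cost integrand is pointwise bounded below by $\delta\|u_s\|^2\ge 0$, so monotone convergence controls that integral, while the boundary term $\langle X^u_{\sigma_k^\varepsilon},Y_{\sigma_k^\varepsilon}X^u_{\sigma_k^\varepsilon}\rangle$ is $P$-a.s.\ nonnegative throughout, so Fatou applies. Once (i) is in hand, the full running-cost integrand is in fact $P$-a.s.\ integrable on $[t,T]$ (combining $u\in L^2$ with \ref{assumption:A2} and \ref{assumption:B2}), which upgrades Fatou to dominated convergence in the interior and is precisely what is needed to identify the right-hand side with $J(t,x,u)$ in~(iii).
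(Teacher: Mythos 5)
Your proposal is correct and follows essentially the same route as the paper: the same It\^o computation with double localization, the same use of \ref{assumption:A0} for nonnegativity of the running cost, and the same Fatou/monotone-convergence passage to the terminal time combined with the supersolution terminal condition. The only (harmless) deviations are that for (ii) the paper instead invokes the convergence theorem for the nonnegative supermartingale $\langle X^u,YX^u\rangle$ to get finiteness of the limit, whereas you extract the contradiction directly from Fatou applied to the localized identity, and that your closing appeal to \ref{assumption:B2} and dominated convergence is unnecessary (and \ref{assumption:B2} is not among the hypotheses here): the running-cost integrand is a nonnegative quadratic form under \ref{assumption:A0}, so monotone convergence already identifies the limit with $J(t,x,u)$.
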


\begin{proof}
First note that since $A$ and $\eta^{-1}$ are bounded and for all $r\in[t,T)$ it holds that  $E[\int_t^r \lVert Y_s - \phi_s \rVert ds]<\infty$,
there indeed exists a unique solution $(X_s)_{s\in[t,T)}$ of~\eqref{eq:diffequwithrandomcoeffsing}.
 
Next, observe that for all $r \in [t,T)$ and all $[t,r]$-valued stopping times $\tau$, It\^o's formula shows that 
    \begin{equation}\label{eq:ito_form_supermart}
        \begin{split}
            \langle X_{\tau}, Y_{\tau} X_{\tau} \rangle 
            & = \langle x, Y_t x\rangle 
            - \int_{t}^{\tau} \big(\langle X_s, \lambda_s X_s \rangle  + \langle u_s, \eta_s u_s \rangle ds + 2 \langle \phi_s X_s, u_s \rangle\big) ds \\
            & \quad + \int_{t}^{\tau} \langle X_s,Z_sX_s \rangle dW_s .
        \end{split}
    \end{equation}
We introduce a non-increasing sequence  $(\varepsilon_m)_{m \in \N}$ such that $\varepsilon_m \in (0, T -t)$, $m \in \N$, and $\lim_{m \to \infty}\varepsilon_m = 0$. Set $T_m: = T - \epsilon_m$, $m\in\N$. 
Note that for each $m\in\N$, the process $(\int_t^{r} \langle X_s,Z_sX_s \rangle dW_s)_{r\in[t,T_m]}$ is a local martingale.  
For each $m\in\N$ let $(\tau_{k,m})_{k\in\N}$ be a localizing sequence of stopping times converging monotonically to $T_m$. 
In particular, we have for all $m,k\in\N$ that  
$$ E\bigg[ \int_t^{\tau_{k,m}} \langle X_s, Z_s X_s \rangle d W_s \,\bigg|\, \cF_t \bigg] = 0 .$$
It thus follows from \eqref{eq:ito_form_supermart} for all $m,k\in\N$ that 
\begin{equation*}
    \begin{split}
        \langle x, Y_t x \rangle &= E \bigg[\int_t^{\tau_{k,m}} \! \big(\langle u_r, \eta_r u_r \rangle + 2\langle \phi_r X_r, u_r \rangle + \langle X_r, \lambda_r X_r \rangle\big) dr+ \langle X_{\tau_{k,m}}, Y_{\tau_{k,m}} X_{\tau_{k,m}} \rangle \bigg| \cF_t \bigg] .
    \end{split}
\end{equation*}
For any $m \in \N$, taking the limit $k \to \infty$, we obtain by combining \ref{assumption:A0}, the monotone convergence theorem, and Fatou's lemma that 
    \begin{align} \nonumber
        \langle x, Y_t x \rangle 
        & \ge 
        E \bigg[\int_t^{T_m} \! \big(\langle u_r, \eta_r u_r \rangle + 2\langle \phi_r X_r, u_r \rangle + \langle X_r, \lambda_r X_r \rangle\big) dr 
        + \langle X_{T_m}, Y_{T_m} X_{T_m} \rangle
        \,\bigg|\, \cF_t \bigg] \\
        & \ge E \bigg[\int_t^{T_m} \! \big(\langle u_r, \eta_r u_r \rangle + 2\langle \phi_r X_r, u_r \rangle + \langle X_r, \lambda_r X_r \rangle\big) dr 
        \,\bigg|\, \cF_t \bigg] .
    \label{eq:costs_localization}
    \end{align}
Taking the limit $m \to \infty$ and applying the monotone convergence theorem one more time, we conclude that 
\begin{equation*}
        \langle x, Y_t x \rangle \ge   E \bigg[\int_t^{T} \big(\langle u_r, \eta_r u_r \rangle + 2\langle \phi_r X_r, u_r \rangle + \langle X_r, \lambda_r X_r \rangle\big) dr \,\bigg|\, \cF_t \bigg].
\end{equation*}    
This and \ref{assumption:A0} yield
\begin{equation}\label{eq:control_in_L2}
    \delta E\bigg[\int_t^T \lVert u_r \rVert^2 dr\bigg] 
    \le E\big[ \langle x, Y_t x\rangle \big] 
    \le \lVert x \rVert^2 E[\textstyle{\sup_{0 \le s \le t}} \lVert Y_s \rVert]
    < \infty.
\end{equation} 
Let $X^u=(X^u_s)_{s\in[t,T]}$ be defined by \eqref{eq:dynamics} and note that it holds $P$-a.s.\ for all $s\in[t,T)$ that $X^u_s=X_s$ and $\lim_{r\to T} X_r = X_T^u$. 
To show that $X_T^u \in \ker(\xi)$, observe that \eqref{eq:ito_form_supermart} together with \ref{assumption:A0} and $E[ \lvert \langle x, Y_t x\rangle \rvert ] <\infty$ 
establishes that $\langle X^u, Y X^u \rangle$ is a local supermartingale on $[t,T)$.
Since $Y$ is $\mathbb{S}_+^d$-valued, it holds that $\langle X^u, Y X^u \rangle$ is a nonnegative supermartingale on $[t,T)$. 
It follows from the supermartingale convergence theorem that 
there exists a random variable $\overline{X} \in L^1(\Omega,\R)$ such that $\lim_{s \to T} \langle X^u_s, Y_s X^u_s \rangle = \overline{X}$ $P$-a.s.
This, continuity of the state process $X^u$, and the fact that $(Y,Z)$ is a supersolution of~\eqref{eq:BSDE_sing} in the sense of \cref{def:supersol_bsde} imply that $X^u_T \in \ker(\xi)$ almost surely. 
From $X^u_T \in \ker(\xi)$ $P$-a.s.\ and~\eqref{eq:control_in_L2} we conclude that $(u_s)_{s \in [t,T]}\in \mathcal A_0(t,x)$. 
Furthermore, note that by \cref{def:supersol_bsde} and continuity of $X^u$ we have $\lim_{s \to T} \langle X^u_s, Y_s X^u_s \rangle \ge \langle X^u_T, \theta X^u_T \rangle$. 
Fatou's lemma, the monotone convergence theorem, and \eqref{eq:costs_localization} therefore imply that 
\begin{equation*}
    \begin{split}
        & \langle x, Y_t x \rangle \\
        &\ge E \bigg[\int_t^{T} \! \big(\langle u_r, \eta_r u_r \rangle + 2 \langle \phi_r X^u_r, u_r \rangle + \langle X^u_r, \lambda_r X^u_r \rangle \big) dr + \liminf_{m \to \infty}\langle X^u_{ T_m}, Y_{ T_m} X^u_{T_m} \rangle \,\bigg|\, \cF_t \bigg]\\
        & \ge E \bigg[\int_t^{T} \! \big(\langle u_r, \eta_r u_r \rangle +  2 \langle \phi_r X^u_r, u_r \rangle + \langle X^u_r, \lambda_r X^u_r \rangle\big)  dr 
        + \langle X_{T}^u, \theta X^u_{T} \rangle \,\bigg|\, \cF_t \bigg] 
        = J(t,x,u).
    \end{split}
    \end{equation*}
This completes the proof.
\end{proof}

In the following result we provide 
the desired representation of the value function~$v$ and a characterization of optimal strategies in terms of the supersolution~$Y$ constructed in \cref{prop:sol_convergence}.

\begin{propo}\label{prop:sol_sing_problem}
Assume that \ref{assumption:A0}--\ref{assumption:A2} and \ref{assumption:B2} are satisfied. Let $(Y,Z)$ be the supersolution to the singular BSDE~\eqref{eq:BSDE_sing} constructed in \cref{prop:sol_convergence}. 
Then we have 
for all $x \in \R^d$ and $t \in [0, T)$ $P$-a.s.\  that $v(t,x)=\langle x,Y_t x\rangle$, and the optimal state process $X^*$ satisfies $\dot{X}^{*}_s=-\eta_s^{-1}(Y_s - \phi_s) X^{*}_s + A_s X^*_s$, $s\in [t,T)$, $X_t = x$.
\end{propo}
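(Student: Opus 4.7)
The plan is to establish $v(t,x) = \langle x, Y_t x\rangle$ by combining the admissibility and upper bound produced by \cref{lemma:continuous_state} with a matching lower bound obtained via monotone passage to the limit in the penalized problems of \cref{sec:pen_case}.

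\textbf{Upper bound and candidate optimizer.} Fix $x \in \R^d$ and $t \in [0,T)$. I would apply \cref{lemma:continuous_state} to the supersolution $(Y,Z)$ from \cref{prop:sol_convergence}. It produces the state process $X^*$ solving $dX^*_s = (-\eta_s^{-1}(Y_s - \phi_s) + A_s) X^*_s\,ds$ on $[t,T)$ with $X^*_t = x$, together with the feedback control $u^*_s = \eta_s^{-1}(Y_s - \phi_s) X^*_s$. The lemma yields $E[\int_t^T \|u^*_s\|^2\,ds] < \infty$ and $X^{u^*}_T \in \ker(\xi) = C$ $P$-a.s.\ (so $u^* \in \mathcal A_0(t,x)$), together with the inequality $\langle x, Y_t x\rangle \ge J(t,x,u^*) \ge v(t,x)$ $P$-a.s.

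\textbf{Lower bound via the penalized problems.} For the reverse inequality, let $u \in \mathcal A_0(t,x)$ be arbitrary. Since $X^u_T \in \ker(\xi)$ $P$-a.s., the penalty term vanishes, i.e.\ $\langle X^u_T, n\xi X^u_T\rangle = 0$ for all $n \in \N$. Combined with $\theta^n \le \theta$ and the inclusion $\mathcal A_0(t,x) \subset \mathcal A(t,x)$, this yields $J_n(t,x,u) \le J(t,x,u)$ $P$-a.s. Applying \cref{prop:sol_pen_problem_eta_not_bounded_above}(ii) to $(Y^n, Z^n)$, the penalized value function representation gives $\langle x, Y^n_t x\rangle = v_n(t,x) \le J_n(t,x,u) \le J(t,x,u)$ $P$-a.s. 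Since by \cref{prop:sol_convergence} the sequence $(Y^n_t)_{n \in \N}$ converges monotonically to $Y_t$ $P$-a.s., I would pass to the limit $n \to \infty$ to obtain $\langle x, Y_t x\rangle \le J(t,x,u)$ $P$-a.s. Taking the essential infimum over $u \in \mathcal A_0(t,x)$ then yields $\langle x, Y_t x\rangle \le v(t,x)$.

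\textbf{Conclusion.} The two inequalities combine to $v(t,x) = \langle x, Y_t x\rangle$ $P$-a.s., and the chain $\langle x, Y_t x\rangle = J(t,x,u^*) = v(t,x)$ identifies $u^*$ as an optimal control with the asserted dynamics for $X^*$. The main technical point will be the cancellation $\langle X^u_T, n\xi X^u_T\rangle = 0$ for any $u \in \mathcal A_0(t,x)$: it is precisely what converts the penalization bound into a usable lower bound once the singular limit is taken, and it relies crucially on the identification $C = \ker(\xi)$ together with the structure of the penalized cost $J_n$. The remaining work is bookkeeping for the monotone/essinf passages, which is standard.
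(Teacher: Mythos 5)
Your proposal is correct and follows essentially the same route as the paper: the upper bound comes from \cref{lemma:continuous_state} applied to the supersolution of \cref{prop:sol_convergence}, and the lower bound from the penalized representation $\langle x, Y^n_t x\rangle = v_n(t,x) \le J_n(t,x,u) \le J(t,x,u)$ for $u \in \mathcal A_0(t,x)$ (using $X^u_T \in \ker(\xi)$ and $\theta^n \le \theta$), followed by the monotone limit $n\to\infty$. The only small addition in the paper is the remark that uniqueness of the optimal control and state follows from the uniform convexity of $J$ (\cref{rem:uniform_convexity}), which justifies speaking of \emph{the} optimal state process.
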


\begin{proof}
For all $n \in \N$ let $(Y^n,Z^n)$ be the solution to the BSDE \eqref{eq:BSDE_pen} constructed in \cref{prop:sol_pen_problem_eta_not_bounded_above}. Note that for all $n\in \N$, $t\in[0,T)$, and $x\in\R^d$ it holds $P$-a.s.\ that 
\begin{equation*} 
v(t,x)=\essinf_{u\in \mathcal A_0(t,x)}J(t,x,u)
\ge \essinf_{u\in \mathcal A_0(t,x)}J_n(t,x,u)
\ge \essinf_{u\in \mathcal A(t,x)}J_n(t,x,u)
=\langle x, Y^n_t x\rangle.
\end{equation*}
This together with \cref{prop:sol_convergence} implies for all $t\in[0,T)$ and $x\in\R^d$ $P$-a.s.\ that $v(t,x)\ge \langle x, Y_t x\rangle$. Combining this with \cref{lemma:continuous_state}, we obtain, for all $x \in \R^d$ and $t \in [0, T)$, that $P$-a.s.\ $v(t,x)=\langle x,Y_t x\rangle=J(t,x,u)$, where $u$ is defined as in \cref{lemma:continuous_state}. 
Uniqueness of optimal control and optimal state follow from the uniform convexity of $J$ (see also \cref{rem:uniform_convexity}) and \cref{lemma:continuous_state}, respectively.
\end{proof}

The next result establishes minimality of the supersolution constructed in \cref{prop:sol_convergence}. This result is a straightforward corollary of \cref{lemma:continuous_state} and \cref{prop:sol_sing_problem}.

\begin{corollary}\label{cor:minimal_solution_new}
Assume that \ref{assumption:A0}--\ref{assumption:A2} and \ref{assumption:B2} are satisfied. 
Let $(Y, Z)$ be the supersolution to the singular BSDE \eqref{eq:BSDE_sing} in the sense of \cref{def:supersol_bsde} constructed in \cref{prop:sol_convergence}. Assume that $(Y', Z')$ is another supersolution to \eqref{eq:BSDE_sing} in the sense of \cref{def:supersol_bsde}. Then it holds $P$-a.s.\ for all $t \in [0, T)$ that $Y_t' \geq  Y_t$.
\end{corollary}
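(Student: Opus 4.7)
The plan is to exploit the fact that both $Y$ and $Y'$ admit a value-function interpretation: $Y$ exactly, through \cref{prop:sol_sing_problem}, and $Y'$ only as an upper bound, through \cref{lemma:continuous_state}. Combining the two gives the pointwise inequality $\langle x, Y'_t x\rangle \ge \langle x, Y_t x\rangle$ for each fixed $t\in[0,T)$ and $x\in\R^d$, and a continuity/countability argument upgrades this to a $P$-a.s. statement uniform in $(t,x)$.

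First, fix $t\in[0,T)$ and $x\in\R^d$. By \cref{lemma:continuous_state}, applied to the supersolution $(Y',Z')$, the feedback control $u_s = \eta_s^{-1}(Y'_s - \phi_s)X^{Y'}_s$ (with $X^{Y'}$ the solution of the associated linear ODE with random coefficients) satisfies $E[\int_t^T \|u_s\|^2\,ds]<\infty$, the corresponding state process fulfills the terminal constraint $X^u_T\in\ker(\xi)=C$ $P$-a.s., and
\[
\langle x, Y'_t x\rangle \ge J(t,x,u).
\]
Since $u\in\mathcal A_0(t,x)$, by definition of the value function \eqref{eq:val_fct_sing} we have $J(t,x,u)\ge v(t,x)$ $P$-a.s. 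On the other hand, \cref{prop:sol_sing_problem} yields $v(t,x)=\langle x, Y_t x\rangle$ $P$-a.s. Combining these three relations gives
\[
\langle x, (Y'_t - Y_t) x\rangle \ge 0 \quad P\text{-a.s.}
\]

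To pass from a pointwise-in-$x$ statement to a statement about the matrix $Y'_t - Y_t$ uniformly in $t\in[0,T)$, one first takes a joint null set over all $x$ ranging through a countable dense subset of $\R^d$ (e.g. $\IQ^d$) and all $t$ in a countable dense subset of $[0,T)$. Next, observe that both $Y$ and $Y'$ have $P$-a.s. continuous paths on $[0,T)$: this follows from the fact that they satisfy \eqref{eq:supersoln_BSDE_equation}, together with the square-integrability of $Z,Z'$ and the local integrability of the drift (which uses the $L^3$-integrability of $Y,Y',\phi,\lambda$ on compact subintervals of $[0,T)$, as established in the proof of \cref{prop:sol_convergence}). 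Symmetry of $Y_t-Y'_t$ and density of $\IQ^d$ in $\R^d$ then extend the inequality to all $x\in\R^d$, and continuity in $t$ extends it to all $t\in[0,T)$ outside a single $P$-null set, giving $Y'_t\ge Y_t$ in the positive-semidefinite order.

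The only non-routine ingredient here is that \cref{lemma:continuous_state} applies to \emph{any} supersolution in the sense of \cref{def:supersol_bsde}, not just to the one constructed via penalization; this is precisely what allows us to produce an admissible control from the abstract data of $(Y',Z')$ and thereby feed it into the value-function representation for $Y$. All other steps are bookkeeping with null sets and use continuity of the two processes on $[0,T)$.
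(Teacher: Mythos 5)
Your proof is correct and follows exactly the route the paper intends: the paper states the corollary is "a straightforward corollary of \cref{lemma:continuous_state} and \cref{prop:sol_sing_problem}", and your argument — applying \cref{lemma:continuous_state} to the arbitrary supersolution $(Y',Z')$ to produce an admissible control, bounding its cost below by the value function, and invoking the representation $v(t,x)=\langle x,Y_tx\rangle$ from \cref{prop:sol_sing_problem} — is precisely that argument, with the null-set/continuity bookkeeping correctly spelled out.
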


Recall that the random variable $\xi$ satisfying $\ker(\xi)=C$ is not unique.
Since the construction of the supersolution $(Y,Z)$ to the singular BSDE \eqref{eq:BSDE_sing} in \cref{prop:sol_convergence} relies on the specific choice of such a $\xi$, it is a priori conceivable that $(Y,Z)$ might depend on this choice.
As an immediate consequence of \cref{cor:minimal_solution_new} we obtain that this is not the case.

\begin{corollary}\label{cor:independence_of_xi}
Assume that \ref{assumption:A0}--\ref{assumption:A2} and \ref{assumption:B2} are satisfied. 
Let $\xi_1,\xi_2\colon \Omega \to \mathbb{S}_+^d$ be $\cF_T$-measurable and bounded and suppose that $\ker(\xi_1) = \ker(\xi_2)$. 
Let $(Y^1,Z^1)$ and $(Y^2,Z^2)$ be the supersolutions to the singular BSDE~\eqref{eq:BSDE_sing} in the sense of \cref{def:supersol_bsde} constructed in \cref{prop:sol_convergence} with $\xi_1$ and $\xi_2$, respectively, as the approximating terminal conditions. Then $Y^1=Y^2$.
\end{corollary}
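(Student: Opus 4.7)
The plan is essentially to reduce the statement to two applications of the minimality result \cref{cor:minimal_solution_new}. The key observation is that the notion of supersolution introduced in \cref{def:supersol_bsde} depends only on the random subspace $C$ and not on the particular representative $\xi$ with $\ker(\xi)=C$: indeed, neither the dynamics \eqref{eq:supersoln_BSDE_equation} nor the singular terminal condition (which is formulated directly in terms of $C$ and $\theta$) ever mention $\xi$. In contrast, the penalization scheme in \cref{prop:sol_convergence} does depend on the specific $\xi$, since the approximating BSDEs carry the terminal conditions $n\xi+\theta^n$. So a priori the two limits $(Y^1,Z^1)$ and $(Y^2,Z^2)$ obtained from $\xi_1$ and $\xi_2$ could differ, even though they both target the same singular condition.

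The first step is therefore to observe that both $(Y^1,Z^1)$ and $(Y^2,Z^2)$ are supersolutions of the same singular BSDE \eqref{eq:BSDE_sing} in the sense of \cref{def:supersol_bsde}, because the defining subspace $C=\ker(\xi_1)=\ker(\xi_2)$ coincides by hypothesis. This is immediate from \cref{prop:sol_convergence}, which establishes that its output is a supersolution in the sense of \cref{def:supersol_bsde}, a property formulated solely in terms of $C$.

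The second step applies \cref{cor:minimal_solution_new} twice. Since $(Y^1,Z^1)$ is the minimal supersolution (for any choice of approximating matrix, including $\xi_1$) and $(Y^2,Z^2)$ is another supersolution, minimality yields $Y^1_t\le Y^2_t$ $P$-a.s.\ for all $t\in[0,T)$. Interchanging the roles of the two constructions (using that $(Y^2,Z^2)$ is itself minimal), we get $Y^2_t\le Y^1_t$ $P$-a.s.\ for all $t\in[0,T)$. Combining the two inequalities gives $Y^1=Y^2$ on $[0,T)$, which is the claim.

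There is no real obstacle here beyond keeping track of the logical content: once one recognizes that \cref{def:supersol_bsde} was deliberately phrased in terms of $C$ rather than $\xi$, the corollary collapses to a symmetric minimality argument. The only point requiring a brief verification is that the assumptions \ref{assumption:A0}--\ref{assumption:A2} together with \ref{assumption:B2} are exactly the hypotheses under which both \cref{prop:sol_convergence} and \cref{cor:minimal_solution_new} apply, so both steps are legitimate under the stated hypothesis.
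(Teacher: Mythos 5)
Your argument is correct and is exactly the one the paper intends: the supersolution notion of \cref{def:supersol_bsde} depends only on $C$, so both constructions yield supersolutions of the same singular BSDE, and the symmetric application of the minimality statement \cref{cor:minimal_solution_new} gives $Y^1\le Y^2$ and $Y^2\le Y^1$. This matches the paper's (implicit) proof, which presents the corollary as an immediate consequence of \cref{cor:minimal_solution_new}.
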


\begin{remark}\label{rem:howtoincludeB}
    Let $B \colon [0,T]\times \Omega \to \mathbb \R^{d \times d}$ be a progressively measurable process. Generalizing~\eqref{eq:dynamics} one could consider 
    state processes $$\dot{X}_s = B_s u_s + A_s X_s,\quad s \in [t, T],\quad X_t = x.$$
    This leads to the slightly modified BSDEs
\begin{equation*}
\begin{split}
    dY_t &= \bigl((Y_t B_t + \phi^{\top}_t) \eta_t^{-1} (B_t^{\top} Y_t + \phi_t) - \lambda_t - Y_t A_t - A^{\top}_t Y_t \bigr)dt+Z_tdW_t, 
    \quad t \in [0,T],\\
    Y_T &=\theta + \xi \quad (\text{or, in the singular case, } Y_T=\infty \Ind_{C^c}+\theta \Ind_C).
\end{split}
\end{equation*}
The candidate for the optimal state process satisfies
$$\dot{X}_s = -B_s \eta_s^{-1}(B^\top_s Y_s + \phi_s) X_s + A_s X_s,  \quad s \in [t, T),\quad X_t = x.$$ 
One can then 
show that the results for the penalized case hold under the additional assumption that there exists $\overline{K} \in (0, \infty)$ such that for all $s \in [0,T]$ it holds $\|B_s\| \le \overline{K}$. For the results of \cref{sec:sing_case} the upper bound constructed in \cref{lem:uniform_upper_bound} is crucial; hence some controllability conditions are necessary to proceed in the same way. 
The results in the singular case can be obtained 
under conditions \ref{assumption:A0}--\ref{assumption:A2} and 
\ref{assumption:B2} and the additional assumption that for all $s \in [0,T]$, the random matrix $B_s$ is $P$-a.s.\ invertible and there exists $\overline{K} \in (0, \infty)$ such that $(\|B_s\| + \| B^{-1}_s \|) \le \overline{K}$.
\end{remark}

\begin{remark}
Note that the results of this section also apply to the setting of \cref{sec:pen_case}. In this way, we can obtain variants of the results of \cref{sec:pen_case} without imposing Condition~\ref{assumption:A3}. Indeed, by setting $\xi = 0$ (so that $C=\R^d$) the control problem \eqref{eq:objective} features no terminal state constraint, but only terminal costs $\langle X_T,\theta X_T\rangle$. Hence, we are in the setting of \cref{sec:pen_case} with $\bar \xi=\theta$. 
The present section can now be used to provide a characterization of the optimal control $u^*$ and the corresponding state $X^*$ via the supersolution $Y$ of~\eqref{eq:BSDE_sing} with the terminal condition $\bar \xi=\theta$ solely under the condition that $\theta$ is $P$-a.s.\ finite (without requiring finite third moments as in Condition~\ref{assumption:A3}). Note, however, that the process $Y$ is only defined on $[0,T)$.
\end{remark}

\section{The case $C=\{0\}$} \label{sect:invertible_case}

In this section we assume that 
the kernel of~$\xi$ only contains the zero vector  
(i.e., $\xi$ is invertible) and present further results on the minimal solution~$Y$
of the singular BSDE~\eqref{eq:BSDE_sing} and the corresponding optimal control~$u^*$. Two cases are considered: 
\begin{itemize}
    \item When~$\eta$ has uncorrelated multiplicative increments (see \cref{def:uncorrmultincrem}) and all other coefficients vanish, we show that~$Y$ admits a closed-form expression and~$u^*$ is deterministic. We thus generalize the results of \cite[Section 5]{ankirchner2014bsdes} to the multidimensional case. Note that in this framework, for the existence of the minimal solution~$Y$, the integrability condition on~$\eta$ can be relaxed (Assumption~\ref{assumption:C1} below instead of~\ref{assumption:A0} and~\ref{assumption:B2}). 
    
    \item When all coefficients are bounded and~$\eta$ is an It\^o process, $Y$ admits an explicit asymptotic expansion in a neighborhood of the terminal time~$T$. Here we generalize \cite[Section 4.1]{grae:popi:21} to the multidimensional case. In dimension~1, this expansion has also been used to prove the existence of the solution of the HJB equation in \cite{graewe2015non,graewe2018smooth,horst2016constrained}.
\end{itemize}

Before proceeding, we state the following consequence of \cref{prop:lower_bound_without_phi}, which is used in \cref{ssect:eta_ito_process}.

\begin{lemma} \label{lem:lower_bound}
    Assume that \ref{assumption:A0}--\ref{assumption:A2} are satisfied, that $\xi$ is $P$-a.s.\ invertible, and that there exists $\delta_1 \in [0, \infty)$ such that $\| \eta^{-1}_t \phi_t + A_t \| \le \delta_1$ holds for all $t \in [0,T]$ $P$-a.s. 
    Let $(Y,Z)$ be the supersolution to the singular BSDE~\eqref{eq:BSDE_sing} constructed in \cref{prop:sol_convergence}. 
    Then for all $t\in[0,T)$ it holds $P$-a.s.\ that 
    $$Y_t \geq e^{-4 \delta_1 T} E \bigg[ \bigg( \int_t^T \eta^{-1}_s ds \bigg)^{-1} \,\bigg|\, \cF_t \bigg] .$$
\end{lemma}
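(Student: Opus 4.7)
The plan is to derive the lower bound for $Y$ by applying \cref{prop:lower_bound_without_phi} to each of the penalized approximations $(Y^n,Z^n)$ with terminal condition $\bar\xi_n := n\xi+\theta^n$, and then passing to the limit $n\to\infty$ using the monotone approximation established in \cref{prop:sol_convergence}.

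First, I would verify that the hypotheses of \cref{prop:lower_bound_without_phi} hold for each penalized BSDE. Conditions \ref{assumption:A0}--\ref{assumption:A2} and the bound $\|\eta^{-1}\phi + A\| \le \delta_1$ are assumed. Since $\xi$ is bounded and each $\theta^n$ is bounded, $\bar\xi_n$ satisfies \ref{assumption:A3}. Since $\xi$ is $P$-a.s.\ invertible and positive semidefinite (hence positive definite), and since $\theta^n \ge 0$, one has $\bar\xi_n \ge n\xi$, so $\bar\xi_n$ is $P$-a.s.\ invertible. Applying \cref{prop:lower_bound_without_phi} thus yields, for every $n\in\N$ and $t\in[0,T)$, that $P$-a.s.
$$Y^n_t \ge e^{-4\delta_1 T}\, E\!\left[\left(\bar\xi_n^{-1} + \int_t^T \eta_s^{-1}\,ds\right)^{-1} \,\bigg|\,\cF_t\right].$$

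Next I would pass to the limit $n\to\infty$. On the left, \cref{prop:sol_convergence} gives $Y^n_t \to Y_t$ $P$-a.s. On the right, $\bar\xi_n$ is $P$-a.s.\ non-decreasing in $n$, so $\bar\xi_n^{-1}$ is $P$-a.s.\ non-increasing in $n$; moreover, pointwise in $\omega$, the estimate $\bar\xi_n^{-1}(\omega) \le n^{-1}\xi^{-1}(\omega)$ gives $\bar\xi_n^{-1}(\omega) \to 0$. Since $\int_t^T \eta_s^{-1}\,ds$ is $P$-a.s.\ positive definite by \ref{assumption:A0}, continuity of matrix inversion on the positive definite cone yields, $P$-a.s.,
$$\lim_{n\to\infty}\left(\bar\xi_n^{-1} + \int_t^T \eta_s^{-1}\,ds\right)^{-1} = \left(\int_t^T \eta_s^{-1}\,ds\right)^{-1},$$
and the sequence is $P$-a.s.\ non-decreasing in $n$. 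Testing the matrix inequality against a fixed $x\in\R^d$ and applying the conditional monotone convergence theorem to the nonnegative scalar sequence $\langle x, (\bar\xi_n^{-1} + \int_t^T \eta_s^{-1}\,ds)^{-1} x\rangle$ transfers the limit under the conditional expectation; taking $x$ in a countable dense subset of $\R^d$ and exploiting symmetry upgrades the resulting pointwise inequalities, on a common $P$-full set, to the desired matrix-valued bound.

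The only technical subtlety, a minor one, is the passage to the limit at the matrix level; I would handle it by the reduction to scalar quadratic forms just described rather than invoking any vector- or matrix-valued monotone convergence result directly. Everything else follows immediately from the uniform bound of \cref{prop:lower_bound_without_phi} and the convergence statement in \cref{prop:sol_convergence}.
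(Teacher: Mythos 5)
Your proposal is correct and follows essentially the same route as the paper: apply \cref{prop:lower_bound_without_phi} to the penalized solutions and pass to the limit using the monotone convergence from \cref{prop:sol_convergence}. The only cosmetic differences are that the paper first replaces the terminal conditions $n\xi+\theta^n$ by the smaller (still invertible) $n\xi$ and then invokes Fatou's lemma, whereas you keep $\theta^n$ and use the conditional monotone convergence theorem on quadratic forms; both variants are valid.
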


\begin{proof}
For all $n\in\N$ take $\xi^n = n \xi$ and $\theta=0$, and let $(Y^n,Z^n)$ be the solution to the BSDE~\eqref{eq:BSDE_pen} constructed in \cref{prop:sol_pen_problem_eta_not_bounded_above} with the terminal condition $\xi^n$. 
Note that for all $t\in [0,T)$ it holds that $Y_t$ is the $P$-a.s.\ limit of the non-decreasing sequence $(Y_t^n)_{n\in\N}$. 
This and \cref{prop:lower_bound_without_phi} imply for all $n\in\N$ and $t\in[0,T)$ that 
\begin{equation*}
    Y_t \ge Y_t^n 
    \ge e^{-4 \delta_1 T} E \bigg[ \bigg( \frac{1}{n} \xi^{-1} + \int_t^T \eta^{-1}_s ds \bigg)^{-1} \,\bigg|\, \cF_t \bigg] .
\end{equation*}
Applying Fatou's lemma yields the claim.
\end{proof}

\subsection{Processes with uncorrelated multiplicative increments} \label{ssect:umi}

In this section we consider the singular BSDE~\eqref{eq:BSDE_sing} under the following setting:
\begin{enumerate}[label=\textbf{C\arabic*)}]
    \item \label{assumption:C0}
    The processes 
    $\lambda, \phi, A$ 
    are zero   
    and it holds 
    $\xi = \Id$.
    
    \item\label{assumption:C1}
    The process $\eta$ 
    satisfies for all 
    $0\le s \le t \le T$ that  $E[\|\eta_t \|] < \infty$, $E[\| \eta^{-1}_s \eta_t \|] < \infty$, and
    $$E\bigg[ \| \eta_T\|^2 + \int_0^T \big(\|\eta_r\|^2 + \|\eta^{-1}_r\| \big) dr \bigg] < \infty.$$
\end{enumerate}

\begin{defi}\label{def:uncorrmultincrem}
Under \ref{assumption:C1} we say that $\eta$ has uncorrelated multiplicative increments if for all 
$0\le s \le t \le T$
it holds 
\begin{equation}\label{eq:def_uncorr_mult_increm}
    E\big[\eta^{-1}_s \eta_t \,\big|\, \mathcal{F}_s\big] = E\big[\eta^{-1}_s \eta_t\big]. 
\end{equation}
\end{defi}
 
Note that in order to obtain in \cref{prop:sol_bsde_uncor_mult} below a solution to the BSDE, we impose integrability conditions similar to those assumed in the one-dimensional case (see \cite[Section 5]{ankirchner2014bsdes}). The main technical difference is that, in the present multidimensional setting, the state processes are not necessarily monotone nor bounded. 

To obtain in \cref{prop:mult_uncorr_val_fun} below 
the characterization of the value function, we assume in addition to \ref{assumption:C0} and \ref{assumption:C1} that the process $\eta$ is uniformly bounded away from zero: We impose ~\ref{assumption:A0}, which 
under \ref{assumption:C0} is equivalent to the assumption that there exists $\delta \in (0,\infty)$ such that $P$-a.s.\ for all $t \in [0,T]$ it holds $\eta_t \ge \delta \Id$.

We first characterize when $\eta$ has uncorrelated multiplicative increments.

\begin{lemma}\label{prop:process_mult_incr}
Assume that \ref{assumption:C1} is satisfied.
Then:

(i) If $\eta$ has uncorrelated multiplicative increments, then it holds for all $0\le s\le t \le T$ that 
$E[\eta_s^{-1} \eta_t] = (E[\eta_s])^{-1} E[\eta_t]$.

(ii) The process $\eta$ has uncorrelated multiplicative increments if and only if the process $(\eta_t (E[\eta_t])^{-1})_{t \in [0,T]}$ is a martingale. 
\end{lemma}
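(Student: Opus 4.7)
The plan is to prove both parts by direct computation using the tower property of conditional expectation, together with the fact that $\eta_s$ is $\mathcal{F}_s$-measurable and therefore can be factored through conditional expectations given $\mathcal{F}_s$. The integrability hypotheses in \ref{assumption:C1} guarantee that every expectation and conditional expectation appearing in the argument is well-defined, and since $\eta_s \in \mathbb{S}^d_{++}$ $P$-a.s.\ with $E[\|\eta_s\|] < \infty$, the matrix $E[\eta_s]$ is itself symmetric positive definite, hence invertible. I will use this fact freely.

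For (i), I would start from the identity $\eta_t = \eta_s (\eta_s^{-1}\eta_t)$ and apply the tower property:
\[
E[\eta_t] = E\big[ E[\eta_s (\eta_s^{-1}\eta_t) \,|\, \mathcal{F}_s] \big] = E\big[ \eta_s \, E[\eta_s^{-1}\eta_t \,|\, \mathcal{F}_s] \big],
\]
where the second equality uses $\mathcal{F}_s$-measurability of $\eta_s$. Since $\eta$ has uncorrelated multiplicative increments, $E[\eta_s^{-1}\eta_t \,|\, \mathcal{F}_s] = E[\eta_s^{-1}\eta_t]$ is a deterministic matrix, which I can pull out of the outer expectation to obtain $E[\eta_t] = E[\eta_s] \, E[\eta_s^{-1}\eta_t]$. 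Multiplying by $(E[\eta_s])^{-1}$ from the left yields the claimed identity.

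For (ii), I would prove both implications directly. For the forward direction, fix $0 \le s \le t \le T$ and compute
\[
E\big[ \eta_t (E[\eta_t])^{-1} \,\big|\, \mathcal{F}_s \big] = E[\eta_t \,|\, \mathcal{F}_s] \, (E[\eta_t])^{-1} = \eta_s \, E[\eta_s^{-1}\eta_t \,|\, \mathcal{F}_s] \, (E[\eta_t])^{-1},
\]
where I again used $\mathcal{F}_s$-measurability of $\eta_s$. Applying the uncorrelated multiplicative increments property and then part (i) to rewrite $E[\eta_s^{-1}\eta_t] = (E[\eta_s])^{-1}E[\eta_t]$, the right-hand side collapses to $\eta_s (E[\eta_s])^{-1}$, establishing the martingale property. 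For the converse, assume $(\eta_t (E[\eta_t])^{-1})_{t\in[0,T]}$ is a martingale. The martingale identity gives $E[\eta_t \,|\, \mathcal{F}_s] = \eta_s (E[\eta_s])^{-1} E[\eta_t]$, and multiplying by $\eta_s^{-1}$ on the left produces
\[
E[\eta_s^{-1}\eta_t \,|\, \mathcal{F}_s] = (E[\eta_s])^{-1} E[\eta_t],
\]
which is a deterministic matrix, so taking unconditional expectations shows that it equals $E[\eta_s^{-1}\eta_t]$, verifying \eqref{eq:def_uncorr_mult_increm}.

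I do not anticipate any serious obstacle: the only subtlety is making sure that matrix products are manipulated in the correct order (since the underlying algebra is noncommutative) and that $E[\eta_s]$ is invertible. The former is handled by being consistent in writing $\eta_s (E[\eta_s])^{-1}$ rather than $(E[\eta_s])^{-1}\eta_s$, and the latter follows from $\eta_s \in \mathbb{S}^d_{++}$ $P$-a.s.\ together with \ref{assumption:C1}.
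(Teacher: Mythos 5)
Your proof is correct and follows essentially the same route as the paper: both parts rest on the tower property, the $\mathcal F_s$-measurability of $\eta_s$ (and $\eta_s^{-1}$), and the observation that a deterministic conditional expectation equals its unconditional counterpart. The only addition is your explicit remark that $E[\eta_s]$ is invertible, which the paper uses implicitly; it is a correct and harmless elaboration.
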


\begin{proof}
If $\eta$ has uncorrelated multiplicative increments, then we have for all $0\le s \le t \le T$ that 
\begin{equation}\label{eq:umi_aux}
    E[\eta_t] = E[\eta_s E[\eta_s^{-1} \eta_t  | \mathcal{F}_s] ] = E[\eta_s] E[\eta_s^{-1} \eta_t ] .
\end{equation}
This proves (i).
Let $M_t=\eta_t (E[\eta_t])^{-1}$, $t\in[0,T]$.
If $\eta$ has uncorrelated multiplicative increments, then it follows from \eqref{eq:def_uncorr_mult_increm} and \eqref{eq:umi_aux} for all $0\le s \le t \le T$ that 
\begin{equation*}
    \begin{split}
        E[M_t | \mathcal{F}_s ] & =  E[\eta_t | \mathcal{F}_s] (E[\eta_t])^{-1} = \eta_s E[ \eta_s^{-1} \eta_t   | \mathcal{F}_s] (E[\eta_t])^{-1}  =   \eta_s  (E[\eta_s])^{-1} = M_s;
    \end{split}
\end{equation*}
hence, $M$ is a martingale. 
For the converse direction, assume that 
$(M_t)_{t\in[0,T]}$ 
is a martingale. 
Then we have for all $0\le s \le t \le T$ that 
\begin{equation*}
    \begin{split}
        \eta_s E[\eta_s^{-1} \eta_t| \cF_s ] 
        & = E[M_t | \mathcal{F}_s] E[\eta_t] =  M_s E[\eta_t]=   \eta_s (E[\eta_s])^{-1} E[\eta_t].
    \end{split}
\end{equation*} 
Hence, it holds for all $0\le s \le t \le T$ that 
$E[ \eta^{-1}_s \eta_t  | \mathcal{F}_s]$ is deterministic. 
This implies for all $0\le s \le t \le T$ that  $E[ \eta^{-1}_s  \eta_t| \mathcal{F}_s]= E[\eta^{-1}_s  \eta_t]$ and completes the proof.
\end{proof}

\begin{remark}\label{rem:integrability_of_M}
    Assume that \ref{assumption:C1} is satisfied and that $M=(M_t)_{t \in [0,T]}$ given by $M_t=\eta_t (E[\eta_t])^{-1}$, $t\in[0,T]$, is a martingale. 
    Then applying Doob's maximal inequality yields that 
    $E[ \sup_{s\in[0,T]} \lVert M_s \rVert^2 ] 
        \le 4 E[\lVert M_T \rVert^2]
        \le 4 E[\lVert \eta_T \rVert^2] 
        \, \lVert (E[\eta_T])^{-1} \rVert^2 
        < \infty$.
\end{remark}

In the following result we provide an explicit solution $Y$ of the BSDE~\eqref{eq:BSDE_sing} 
and a deterministic candidate for the optimal control 
in the case where $\eta$ has uncorrelated multiplicative increments. 
The optimality of this candidate is subsequently established in \cref{prop:mult_uncorr_val_fun}.

\begin{propo}\label{prop:sol_bsde_uncor_mult}
    Assume that \ref{assumption:C0} and \ref{assumption:C1} are satisfied and that the process $\eta$ has uncorrelated multiplicative increments. 
    Let $Y\colon [0,T) \times \Omega \to \mathbb{S}_+^d$ be defined by 
    \begin{equation*}
        Y_t = \bigg(\int_t^T \big(E[\eta_s | \mathcal{F}_t]\big)^{-1} ds\bigg)^{-1}, 
        \quad t \in [0,T).
    \end{equation*}
    Moreover, denote 
    \begin{equation*}
        h_t = \int_t^T \big(E  [\eta_s]  \big)^{-1} ds, 
        \quad t\in[0,T].
    \end{equation*}
    Then:

    (i) It holds for all $t\in[0,T)$ that 
    $Y_t = \eta_t (E[\eta_t])^{-1} h_t^{-1}$.

    (ii) There exists a process $Z\colon [0,T) \times \Omega \to \mathbb{S}^d$ such that $(Y,Z)$ is a solution of the BSDE~\eqref{eq:BSDE_sing} in the sense of \cref{def:supersol_bsde}.

    (iii) Let $t \in [0,T)$ and $x \in \R^d$. 
    Define 
    \begin{align}\label{uncorr_mult_increm_X}
        & X_s=h_s h_t^{-1} x, \quad s \in [t,T],\\
        \label{uncorr_mult_increm_u} 
        & u_s=(E[\eta_s])^{-1} h_t^{-1} x, \quad s \in [t,T].
    \end{align} 
    Then 
    it holds that $u=(u_s)_{s\in[t,T]} \in \cA_0(t,x)$, and 
    $X=(X_s)_{s\in[t,T]}$ is the state~\eqref{eq:dynamics} associated to $u$.   
    Furthermore, 
    it holds that $J(t, x, u ) = \langle x, Y_t x\rangle$. 
\end{propo}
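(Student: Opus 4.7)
The plan rests on the factorization $Y_t = M_t h_t^{-1}$, where $M_t = \eta_t (E[\eta_t])^{-1}$ is a martingale by \cref{prop:process_mult_incr}(ii) and $h^{-1}$ is a deterministic, matrix-valued function that blows up as $t\to T$ (note $h_T = 0$). Both the BSDE dynamics and the singular terminal condition follow naturally from this decomposition, and the candidate control in (iii) is tailored to it.

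For (i), the martingale property of $M$ yields $E[\eta_s \mid \cF_t] = E[M_s \mid \cF_t]\, E[\eta_s] = M_t E[\eta_s]$ for $s \ge t$; inverting and integrating over $s \in [t,T]$ (using that $M_t^{-1}$ is $\cF_t$-measurable) gives $Y_t^{-1} = h_t M_t^{-1}$, hence $Y_t = M_t h_t^{-1}$. For (ii), I would apply the martingale representation theorem componentwise to obtain a progressively measurable $\widetilde Z$ with $dM_t = \widetilde Z_t\, dW_t$ and $E[\int_0^T \|\widetilde Z_t\|^2 dt] < \infty$, the latter being ensured by the $L^2$-bound on $M$ from \cref{rem:integrability_of_M}. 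The deterministic identity $\dot h_t = -(E[\eta_t])^{-1}$ combined with $d(A^{-1}) = -A^{-1}(dA)A^{-1}$ gives $d(h_t^{-1}) = h_t^{-1}(E[\eta_t])^{-1} h_t^{-1}\, dt$, and It\^o's product rule (no cross term, since $h^{-1}$ is of finite variation) yields
\[
dY_t = \widetilde Z_t h_t^{-1}\, dW_t + M_t h_t^{-1}(E[\eta_t])^{-1} h_t^{-1}\, dt.
\]
Substituting the algebraic identity $(E[\eta_t])^{-1} = \eta_t^{-1} M_t$, which is just the definition of $M$ rearranged, transforms the drift into $M_t h_t^{-1} \eta_t^{-1} M_t h_t^{-1} = Y_t \eta_t^{-1} Y_t$, matching the BSDE~\eqref{eq:BSDE_sing}. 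Setting $Z_t := \widetilde Z_t h_t^{-1}$ completes the dynamics, and the integrability required by \cref{def:supersol_bsde} on $[0,\tau]$ for $\tau < T$ follows from the boundedness of $h^{-1}$ on $[0,\tau]$ and the $L^2$-boundedness of $M$. For the singular terminal condition (here $C = \{0\}$), I would use $Y_t^{-1} = \int_t^T (E[\eta_s \mid \cF_t])^{-1} ds$ together with the operator Jensen inequality $(E[\eta_s \mid \cF_t])^{-1} \le E[\eta_s^{-1} \mid \cF_t]$ to deduce $\lambda_{\max}(Y_t^{-1}) \le E[\int_t^T \|\eta_s^{-1}\|\, ds \mid \cF_t]$. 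A sandwich argument (bounding this integral by $\int_{T-\varepsilon}^T \|\eta_s^{-1}\| ds$ for $t \ge T - \varepsilon$, applying L\'evy's downward theorem, and letting $\varepsilon \to 0$ using the integrability in \ref{assumption:C1}) then shows $\lambda_{\max}(Y_t^{-1}) \to 0$ a.s.\ as $t \to T$; equivalently $\lambda_{\min}(Y_t) \to \infty$, which combined with the left-continuity of $(\psi_t)$ at $T$ gives $\liminf_{t \to T}\langle \psi_t, Y_t \psi_t\rangle = \infty$ whenever $\psi_T \ne 0$.

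For (iii), direct differentiation of $X_s = h_s h_t^{-1} x$ gives $\dot X_s = -(E[\eta_s])^{-1} h_t^{-1} x = -u_s$ with $X_t = x$, so $(X_s)_{s \in [t,T]}$ is the state associated to $u$ (using $A=0$), and $X_T = h_T h_t^{-1} x = 0 \in C$; together with the deterministic $L^1$-integrability of $u$ on $[t,T]$ this yields $u \in \cA_0(t,x)$. Since $\lambda = \phi = 0$ and $X_T = 0$, the cost reduces to $J(t,x,u) = E[\int_t^T \langle u_s, \eta_s u_s\rangle\, ds \mid \cF_t]$; factoring out the $\cF_t$-measurable quantities and using $E[\eta_s \mid \cF_t] = M_t E[\eta_s]$ gives
\[
\int_t^T E\bigl[\langle u_s, \eta_s u_s\rangle \,\big|\, \cF_t\bigr] ds = x^\top h_t^{-1} \Bigl(\int_t^T (E[\eta_s])^{-1} ds\Bigr) M_t h_t^{-1} x = x^\top M_t h_t^{-1} x = \langle x, Y_t x\rangle.
\]
The main technical obstacle will be the almost-sure convergence $\lambda_{\max}(Y_t^{-1}) \to 0$ needed for the singular terminal condition, since both the integrand and the interval of integration depend on $t$ and so standard martingale convergence does not apply off the shelf; once the factorization $Y = M h^{-1}$ is in hand, everything else reduces to direct algebraic manipulation.
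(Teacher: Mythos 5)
Your proposal is correct, and for parts (i) and (ii) it follows essentially the paper's route: the factorization $Y_t=M_t h_t^{-1}$ with $M_t=\eta_t(E[\eta_t])^{-1}$ a martingale, martingale representation for $M$, and the product rule with the finite-variation factor $h^{-1}$ to obtain $dY_t=Y_t\eta_t^{-1}Y_t\,dt+Z_t\,dW_t$ with $Z_t=\widetilde Z_t h_t^{-1}$. Two points of genuine difference are worth recording. First, for the singular terminal condition the paper simply asserts $\liminf_{t\to T}\lambda_{\min}(Y_t)=\infty$, whereas you supply an actual argument: $Y_t^{-1}\le E[\int_t^T\eta_s^{-1}ds\mid\cF_t]$ by the conditional version of \cref{lem:conv_of_inv}, then a sandwich with $\int_{T-\varepsilon}^T\lVert\eta_s^{-1}\rVert ds$ and martingale convergence. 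This is more detailed than the paper; just note that the relevant tool is L\'evy's \emph{upward} theorem (the filtration increases to $\cF_{T^-}=\cF_T$), not the downward one, and that the a.s.\ statement over the continuum $t\to T$ uses a continuous modification of the conditional-expectation martingale, which the Brownian filtration provides. Second, for (iii) you replace the paper's argument (It\^o's formula applied to $\langle X_s,Y_sX_s\rangle$, localization, and a limit $s\to T$ using $E[\langle X_s,Y_sX_s\rangle\mid\cF_t]=\langle X_s,M_th_t^{-1}x\rangle\to 0$) by a direct computation: conditional Tonelli plus the identity $E[\eta_s\mid\cF_t]=M_tE[\eta_s]$ collapse the cost to $x^\top h_t^{-1}h_tM_th_t^{-1}x=\langle x,Y_tx\rangle$. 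Since $u$ and $X$ are deterministic and the integrand $\langle u_s,\eta_s u_s\rangle$ is nonnegative, this interchange is legitimate, and your route is arguably cleaner here because it avoids the stochastic-integral localization entirely; you should still cite \cref{lem:conv_of_inv} together with \ref{assumption:C1} to justify $u\in L^1([t,T])$ (finiteness of $h_t$), and note that $Z_t=\widetilde Z_th_t^{-1}$ is $\mathbb{S}^d$-valued because $Y$ is symmetric, as required by \cref{def:supersol_bsde}. None of these are gaps, only small touch-ups.
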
 

\begin{proof}
Let $M_t=\eta_t (E[\eta_t])^{-1}$, $t\in[0,T]$, and note that from \eqref{eq:def_uncorr_mult_increm} and \cref{prop:process_mult_incr} we have for all $t\in[0,T)$ that 
\begin{align}\label{eq:repres_Y_uncorr_mult_increm}
    Y_t &= \bigg( \int_t^T \big( E[ \eta_t \eta^{-1}_t \eta_s | \cF_t] \big)^{-1} ds \bigg)^{-1} 
    =\eta_t \bigg( \int_t^T \big(E[  \eta^{-1}_t \eta_s | \cF_t]\big)^{-1} ds \bigg)^{-1}\\
    &= \eta_t \bigg( \int_t^T \Big( \big( E[\eta_t] \big)^{-1} E[\eta_s] \Big)^{-1} ds \bigg)^{-1}
    = \eta_t \big(E[\eta_t]\big)^{-1} \bigg( \int_t^T \big(E  [\eta_s]  \big)^{-1} ds \bigg)^{-1} 
    = M_t h_t^{-1}. \nonumber
\end{align}
This proves (i). 
Recall that $M=(M_t)_{t\in[0,T]}$ is a martingale by \cref{prop:process_mult_incr} and satisfies $E[\sup_{s\in[0,T]} \lVert M_s \rVert^2]<\infty$ due to \cref{rem:integrability_of_M}. 
The martingale representation theorem ensures that there exists a progressively measurable process $\widetilde{Z}\colon [0,T] \times \Omega \to \mathbb{R}^{d \times d}$ such that $E[\int_0^T \lVert \widetilde Z_s\rVert^2 ds]<\infty$ and $M_t=\int_0^t \widetilde Z_s dW_s$ for all $t\in[0,T]$.
For all $t \in [0,T)$ we define $Z_t=\widetilde Z_t h_t^{-1}$. Note that for all $t\in[0,T)$ we have $E[\int_0^t \lVert Z_s\rVert^2 ds]<\infty$. Moreover,~\eqref{eq:repres_Y_uncorr_mult_increm} and \cref{rem:integrability_of_M} guarantee for all $t\in[0,T)$ that $E[\sup_{s\in[0,t]} \lVert Y_s \rVert^2 ]<\infty$. 
Integration by parts and~\eqref{eq:repres_Y_uncorr_mult_increm} yield for all $t\in[0,T)$ that 
\begin{equation}\label{eq:Y_BSDE_uncorr_mult_incr}
    \begin{split}
        dY_t & = (dM_t) h_t^{-1} + M_t h_t^{-1} (E[\eta_t])^{-1} h_t^{-1} dt 
        = Z_t dW_t + Y_t \eta_t^{-1} Y_t dt . 
    \end{split}
\end{equation}
Since $Y$ is $\mathbb{S}_+^d$-valued, this shows that $Z=(Z_t)_{t\in[0,T)}$ is $\mathbb{S}^d$-valued. 
Furthermore, for any path of a 
process $(\psi_t)_{t \in [0,T]}$ that is left-continuous at $T$ with $\lim_{t \to T} \psi_t \neq 0$, it holds that 
$$\liminf_{t \to T}\langle \psi_t, Y_t \psi_t \rangle \ge \liminf_{t \to T} \big(\lambda_{\min}(Y_t) \|\psi_t \|^2\big) = \infty.$$ 
Thus the pair $(Y,Z)$ solves the BSDE~\eqref{eq:BSDE_sing} in the sense of \cref{def:supersol_bsde}, which proves~(ii).

To show (iii), 
we fix $t \in [0,T)$ and $x \in \R^d$. Due to \cref{lem:conv_of_inv} and \ref{assumption:C1}, it holds that $u\in L^1([t,T])$. 
Observe that $X_t=x$, $X_T=0$, and for all $s \in [t,T]$ it holds
$$\dot{X}_s = -(E[\eta_s])^{-1} h_t^{-1} x =-u_s.$$ 
Integration by parts, \eqref{eq:Y_BSDE_uncorr_mult_incr}, and \eqref{eq:repres_Y_uncorr_mult_increm} therefore imply for all $s \in [t,T)$ that  
\begin{equation*}
    \begin{split}
        d\langle X_s, Y_sX_s \rangle
        & = - \langle u_s, Y_s X_s \rangle ds 
        + \langle X_s, \widetilde Z_s h_s^{-1} X_s \rangle dW_s 
        + \langle X_s, M_s h_s^{-1} (E[\eta_s])^{-1} h_s^{-1} X_s \rangle ds \\
        & \quad - \langle X_s, Y_s u_s \rangle ds \\
        & = - \langle u_s, \eta_s (E[\eta_s])^{-1} h_s^{-1} h_s h_t^{-1} x \rangle ds
        + \langle X_s, Z_s X_s \rangle dW_s \\
        & \quad + \langle X_s, Y_s (E[\eta_s])^{-1} h_s^{-1} h_s h_t^{-1} x \rangle ds 
        - \langle X_s, Y_s u_s \rangle ds \\
        & = - \langle u_s, \eta_s u_s \rangle ds
        + \langle X_s, Z_s X_s \rangle dW_s . 
    \end{split}
\end{equation*}
Since $X$ is continuous and deterministic and for all $s \in [0,T)$ it holds $Z \in L^2(\Omega \times [0,s])$, we obtain for all $s\in [t,T)$ that 
$E[ \int_t^s \langle X_r,Z_r X_r \rangle dW_r | \cF_t]=0$. 
It follows for all $s \in [t,T)$ that 
\begin{equation}\label{eq:uncorr_mult_increm_J1}
    \begin{split}
        \langle x, Y_t x \rangle 
        & = E\bigg[ \int_t^s \langle u_r, \eta_r u_r \rangle dr \, \bigg|\, \cF_t \bigg] 
        + E\big[ \langle X_s, Y_s X_s \rangle \,\big| \, \cF_t \big] .
    \end{split}
\end{equation}
Note that \eqref{eq:repres_Y_uncorr_mult_increm} and the facts that $M$ is a martingale and $X$ is deterministic imply for all $s \in [t,T)$ that 
\begin{equation*}
    E\big[ \langle X_s, Y_s X_s \rangle \,\big| \, \cF_t \big]
    = E\big[ \langle X_s, M_s h_s^{-1} h_s h_t^{-1} x \rangle \,\big| \, \cF_t \big] 
    = \langle X_s, M_t h_t^{-1} x \rangle .
\end{equation*}
Since $X$ is continuous with $X_T=0$, we therefore obtain that 
\begin{equation}\label{eq:uncorr_mult_increm_J2}
    \lim_{s\to T} E\big[ \langle X_s, Y_s X_s \rangle \,\big| \, \cF_t \big]
    = \langle X_T, M_t h_t^{-1} x \rangle 
    = 0 .
\end{equation}
In addition, observe that the monotone convergence theorem shows that 
\begin{equation}\label{eq:uncorr_mult_increm_J3}
    \lim_{s \to T} E\bigg[ \int_t^s \langle u_r, \eta_r u_r \rangle dr \, \bigg|\, \cF_t \bigg] 
    = 
    E\bigg[ \int_t^T \langle u_r, \eta_r u_r \rangle dr \, \bigg|\, \cF_t \bigg] .
\end{equation}
We combine \eqref{eq:uncorr_mult_increm_J1}, \eqref{eq:uncorr_mult_increm_J2}, and \eqref{eq:uncorr_mult_increm_J3} to obtain 
$\langle x, Y_t x\rangle = J(t,x,u)$, which completes the proof. 
\end{proof}

\begin{propo}\label{prop:mult_uncorr_val_fun}
    Assume that 
    \ref{assumption:A0}, \ref{assumption:C0}, and \ref{assumption:C1} 
    are satisfied
    and that $\eta$ has uncorrelated multiplicative increments. 
    Let $(Y, Z)$ be the solution to the BSDE~\eqref{eq:BSDE_sing} constructed in \cref{prop:sol_bsde_uncor_mult}. 

    (i) 
    Let $x \in \R^d$, $t\in[0,T)$ 
    and let $u=(u_s)_{s\in[t,T)}$ be defined by 
    \eqref{uncorr_mult_increm_u}. 
    Then $u$ is the unique optimal control for~\eqref{eq:val_fct_sing} and  
    we have that $v(t, x) = \langle x, Y_t x\rangle$. 

    (ii)
    The solution $(Y,Z)$ is the minimal solution to the BSDE~\eqref{eq:BSDE_sing}.
\end{propo}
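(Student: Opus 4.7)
For part~(i), the inequality $v(t,x)\le\langle x,Y_tx\rangle$ is immediate from \cref{prop:sol_bsde_uncor_mult}(iii), which exhibits the control $u$ defined in~\eqref{uncorr_mult_increm_u} as a member of $\mathcal{A}_0(t,x)$ with $J(t,x,u)=\langle x,Y_tx\rangle$. For the reverse inequality I would bypass the natural penalisation-plus-limit argument (which would require showing $Y^n_t\to Y_t$ for the penalised solutions with terminal condition $n\,\Id$, and hence would invoke \ref{assumption:B2}, not available here) in favour of a direct weak-duality argument tailored to the uncorrelated-multiplicative-increments structure of $\eta$.

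Concretely, fix an arbitrary $u'\in\mathcal{A}_0(t,x)$, so that $\int_t^T u'_s\,ds=x$ $P$-a.s., and introduce the candidate dual multiplier
\[
\mu_s=\eta_s(E[\eta_s])^{-1}h_t^{-1}x,\qquad s\in[t,T].
\]
By \cref{prop:process_mult_incr}(ii) the matrix-valued process $\eta_s(E[\eta_s])^{-1}$ is a martingale, so $\mu$ is an $\mathbb{R}^d$-valued martingale with $\mu_t=Y_tx$ by \cref{prop:sol_bsde_uncor_mult}(i). The pointwise Fenchel/Young inequality gives $\langle u'_s,\eta_s u'_s\rangle\ge 2\langle u'_s,\mu_s\rangle-\langle\mu_s,\eta_s^{-1}\mu_s\rangle$. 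Integrating over $[t,T]$ and taking $E[\,\cdot\mid\mathcal{F}_t]$, the martingale property of $\mu$, the tower property, and the $P$-a.s.\ identity $\int_t^T u'_s\,ds=x$ together yield
\[
E\bigg[\int_t^T\langle u'_s,\mu_s\rangle\,ds\,\bigg|\,\mathcal{F}_t\bigg]=E\big[\langle x,\mu_T\rangle\,\big|\,\mathcal{F}_t\big]=\langle x,\mu_t\rangle=\langle x,Y_tx\rangle.
\]
A direct computation using the identity $E[\eta_s\mid\mathcal{F}_t](E[\eta_s])^{-1}=\eta_t(E[\eta_t])^{-1}$ (a consequence of \cref{prop:process_mult_incr}) together with $\int_t^T(E[\eta_s])^{-1}\,ds=h_t$ then shows that $E[\int_t^T\langle\mu_s,\eta_s^{-1}\mu_s\rangle\,ds\mid\mathcal{F}_t]=\langle x,Y_tx\rangle$ as well. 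Combining these two identities, $J(t,x,u')\ge 2\langle x,Y_tx\rangle-\langle x,Y_tx\rangle=\langle x,Y_tx\rangle$, which gives $v(t,x)\ge\langle x,Y_tx\rangle$ as required. Uniqueness of the optimal control follows from the equality case of Young's inequality (which forces $\eta_s u^*_s=\mu_s$ a.e., equivalently $u^*_s=(E[\eta_s])^{-1}h_t^{-1}x$), or alternatively from the uniform convexity of $J$ recorded in \cref{rem:uniform_convexity}.

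Part~(ii) is then a short corollary in the spirit of \cref{cor:minimal_solution_new}: given any other supersolution $(Y',Z')$ in the sense of \cref{def:supersol_bsde}, \cref{lemma:continuous_state}---whose hypotheses \ref{assumption:A0}--\ref{assumption:A2} are trivially met under \ref{assumption:C0} since $A=\lambda=\phi=0$---produces a control $u''$ whose state satisfies $X^{u''}_T\in\ker(\xi)=\{0\}$, hence lies in $\mathcal{A}_0(t,x)$, together with $\langle x,Y'_tx\rangle\ge J(t,x,u'')$. Combined with $v(t,x)=\langle x,Y_tx\rangle$ from part~(i), this yields $\langle x,Y'_tx\rangle\ge\langle x,Y_tx\rangle$ for every $x\in\R^d$, hence $Y'_t\ge Y_t$ in $\mathbb{S}^d$.

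The key technical step in the above is the choice of the dual multiplier $\mu$: it is precisely the uncorrelated-multiplicative-increments assumption that makes $\eta(E[\eta])^{-1}$ a martingale and thereby allows the tower-property manipulations to close the weak-duality bound at the correct value $\langle x,Y_tx\rangle$. The remaining Fubini/integrability verifications are routine: the dual cost is explicitly finite and equal to $\langle x,Y_tx\rangle$, $u'\in L^2(\Omega\times[t,T])$ follows from uniform convexity (\cref{rem:uniform_convexity}) together with $\eta\ge\delta\,\Id$ from \ref{assumption:A0}, and the requisite second-moment bounds on $\eta$ and $\eta^{-1}$ are supplied by \ref{assumption:C1}.
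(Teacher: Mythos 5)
Your proposal is correct and follows essentially the same route as the paper: the ``Fenchel/Young'' inequality $\langle u'_s,\eta_s u'_s\rangle\ge 2\langle u'_s,\mu_s\rangle-\langle\mu_s,\eta_s^{-1}\mu_s\rangle$ with $\mu_s=\eta_s u_s$ is exactly the paper's pointwise bound $\langle u_s,\eta_s u_s\rangle+\langle u'_s,\eta_s u'_s\rangle\ge 2\langle \eta_s u_s,u'_s\rangle$, and both arguments kill the cross term by exploiting the martingale property of $\eta_\cdot(E[\eta_\cdot])^{-1}$ together with $\int_t^T u'_s\,ds=x$. The only (cosmetic) difference is that you evaluate $E[\int_t^T\langle u'_s,\mu_s\rangle\,ds\mid\cF_t]$ via the tower property, whereas the paper integrates by parts against the state difference $X'-X$; part~(ii) is handled identically via \cref{lemma:continuous_state}.
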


\begin{proof}
To prove (i), let $x \in \R^d$, $t\in[0,T)$, let $u=(u_s)_{s\in[t,T)}$ be defined by 
\eqref{uncorr_mult_increm_u}, and denote again $h_r=\int_r^T (E[\eta_s])^{-1} ds$, $r\in[0,T]$. 
Let $u'\in \cA_0(t,x)$ be an arbitrary control. Note that if $u' \notin L^2(\Omega \times [t, T])$, then the lower bound of $\eta$ implies that $u'$ produces infinite costs. Thus, it suffices to consider $u' \in L^2(\Omega \times [t, T])$. 
Let $X=(X_s)_{s\in[t,T]}$ and $X'=(X'_s)_{s\in[t,T]}$ be the state processes associated via~\eqref{eq:dynamics} to $u$ and $u'$, respectively. 
Note that for all $s \in [t,T]$ it holds  
$ \langle u_s, \eta_s u_s \rangle + \langle u'_s, \eta_s u'_s \rangle 
\ge 2 \langle \eta_s u_s, u'_s  \rangle.$ 
This and \eqref{uncorr_mult_increm_u} imply that 
\begin{equation}\label{eq:final_prop_uncorr_mult_increm1}
    \begin{split}
        & \int_t^T \big( \langle u'_s, \eta_s u'_s \rangle - \langle u_s, \eta_s u_s \rangle \big) \, ds
        \ge 
        2 \int_t^T \langle \eta_s u_s, u'_s - u_s \rangle\, ds \\
        & = 2 \int_t^T  (\eta_s u_s)^\top d(X'_s-X_s) 
        = 2 \int_t^T  \big(\eta_s (E[\eta_s])^{-1} h_t^{-1} x \big)^\top d(X'_s-X_s) .
    \end{split}
\end{equation}
Recall from \cref{prop:process_mult_incr} that $M=(M_s)_{s\in[0,T]}$ defined by $M_s=\eta_s (E[\eta_s])^{-1}$, $s\in[0,T]$, is a martingale. 
Since $X_t=x=X'_t$ and $X_T=0=X'_T$, we have by integration by parts that 
\begin{equation}\label{eq:final_prop_uncorr_mult_increm2}
    \begin{split}
        & \int_t^T  \big(M_s h_t^{-1} x \big)^\top d(X'_s-X_s)
        = -x^\top h_t^{-1} \int_t^T (dM_s^{\top})  (X'_s-X_s) .
    \end{split}
\end{equation}
Note that $E[\int_t^T \| u'_s \|^2 ds] < \infty$ implies that $E[\sup_{t \le s \le T} \| X'_s\|^2] < \infty$. 
This, the fact that~$X$ is continuous and deterministic, and \cref{rem:integrability_of_M} 
ensure that 
\begin{equation*}
    E\bigg[ \int_t^T (dM_s^\top) (X_s'-X_s) \,\bigg|\, \cF_t \bigg] = 0.
\end{equation*} 
Hence, \eqref{eq:final_prop_uncorr_mult_increm1} and \eqref{eq:final_prop_uncorr_mult_increm2} show that 
\begin{equation*}
    E\bigg[ \int_t^T \big( \langle u'_s, \eta_s u'_s \rangle - \langle u_s, \eta_s u_s \rangle \big) \, ds \,\bigg|\, \cF_t \bigg] 
    \ge 0. 
\end{equation*}
This together with \cref{prop:sol_bsde_uncor_mult} implies that $v(t,x)=\langle x, Y_t x\rangle =J(t,x,u)$. 

To prove (ii), suppose that $(Y',Z')$ is a solution of the BSDE~\eqref{eq:BSDE_sing} in the sense of \cref{def:supersol_bsde}. 
Then \cref{lemma:continuous_state} implies for all $x \in \R^d$ and $t \in [0,T)$ that $\langle x, Y'_t x \rangle \ge v(t,x) = \langle x, Y_t x\rangle$, which completes the proof. 
\end{proof}

\begin{ex}\label{ex:eta_for_uncorr_mult_increm}
Assume \ref{assumption:C0}.

(i) 
Let $\eta=(\eta_t)_{t \in [0,T]}$ be an $\mathbb{S}^d_{++}$-valued martingale that satisfies the integrability conditions \ref{assumption:C1}. 
Then $\eta$ has uncorrelated multiplicative increments and \cref{prop:sol_bsde_uncor_mult} gives a solution to the BSDE~\eqref{eq:BSDE_sing} in the sense of \cref{def:supersol_bsde}. 

(ii) 
To construct examples where also 
the condition that $\eta$ is bounded from below 
is satisfied, one can proceed by stopping:
Let $N=(N_t)_{t \in [0,T]}$ be a 
continuous, $\mathbb{S}^d$-valued 
martingale 
such that $E[\lVert N_T \rVert^2 ] < \infty$.
Let $\delta \in (0,\infty)$ and 
define the stopping time 
$\tau = \inf \{t \ge 0 \colon N_t \le \delta \Id \}$. 
Let $\eta=(\eta_t)_{t\in[0,T]}$ be given by 
$\eta_t := N_{t \wedge \tau}$, $t \in [0,T]$. 
Then it holds for all $t\in[0,T]$ that $\eta_t \ge \delta \Id$ and \ref{assumption:C1} is satisfied. 
Consequently, 
\cref{prop:mult_uncorr_val_fun} shows that 
the solution constructed in \cref{prop:sol_bsde_uncor_mult} is the minimal solution of the BSDE \eqref{eq:BSDE_sing} in the sense of \cref{def:supersol_bsde}.  
Moreover, since $\eta$ is a martingale, 
we have that 
the optimal strategy is linear, given by 
$X_s=\frac{T-s}{T-t}x$, $s\in [t,T]$.
\end{ex}

\subsection{The case when $\eta$ is an It\^o process} \label{ssect:eta_ito_process}

In this subsection, we assume that the process $\eta$ is an It\^o process and that all coefficients are bounded. More precisely:
\begin{enumerate}[label=\textbf{D\arabic*)}]
    \item \label{assumption:D0}
    There exist progressively measurable processes $b^{\eta}\colon [0,T] \times \Omega \to \mathbb{S}^d$ and $\sigma^{\eta}\colon [0,T] \times \Omega \to \mathbb{S}^d$ such that 
    $d\eta _t = b^\eta_t dt + \sigma^\eta_t dW_t$, $t\in[0,T]$, and there exists $K^{\eta} \in (0,\infty)$ such that $P$-a.s.\ for all $t\in[0,T]$ it holds $\lVert b^{\eta}_t \rVert \le K^{\eta}$ and $\lVert \sigma^{\eta}_t \rVert \le K^{\eta}$.
    
    \item \label{assumption:D1}
    It holds $\xi = \Id$ and 
    there exists $K \in (0, \infty)$ such that $P$-a.s.\ for all $t \in [0,T]$ it holds  $\| \lambda_t \| \le K$, $\| \phi_t \| \le K$, $\|A_t \| \le K$, 
    and 
    $\| \eta_t \| \le K$. 
\end{enumerate}

Note that under \ref{assumption:A0}, \ref{assumption:D0}, and \ref{assumption:D1} the conclusion of \cref{lem:lower_bound} holds.
The main result of this part is the following proposition, where we establish an asymptotic expansion near~$T$ of the minimal supersolution of the singular BSDE~\eqref{eq:BSDE_sing}.  

\begin{propo} \label{prop:asymp_expansion}
    Assume that \ref{assumption:A0}, \ref{assumption:D0}, and \ref{assumption:D1} are satisfied.
    Then there exists a progressively measurable process $H\colon [0,T] \times \Omega \to \mathbb{S}^d$ such that $P$-a.s.\ for all $t \in [0,T)$ the minimal supersolution $(Y,Z)$ to the BSDE~\eqref{eq:BSDE_sing} in the sense of \cref{def:supersol_bsde} admits the representation
    \begin{equation}\label{eq:asymp_expansion2}
        Y_t = \dfrac{1}{T-t} \eta_t + \dfrac{1}{(T-t)^2} H_t ,
    \end{equation}
    and there exists a constant $c \in (0, \infty)$ such that $P$-a.s.\ for all $t\in [0,T]$ we have 
    $$\|H_t\|\leq c(T-t)^2.$$
\end{propo}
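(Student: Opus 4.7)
My strategy is to sandwich $Y_t$ in the positive semidefinite order between $\eta_t/(T-t) \pm c_i \Id$ uniformly in $t\in[0,T)$ and $\omega$, with deterministic constants $c_1, c_2 > 0$. Setting $H_t := (T-t)^2 Y_t - (T-t)\eta_t$ for $t<T$ and $H_T := 0$, such a sandwich translates into $\|Y_t - \eta_t/(T-t)\|_2 \le \max(c_1,c_2)$ and hence $\|H_t\| \le c(T-t)^2$ in Frobenius norm with $c = \sqrt d\,\max(c_1,c_2)$. Progressive measurability of $H$ is inherited from that of $Y$ and $\eta$.

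For the upper bound I apply \cref{lem:uniform_upper_bound}. Its integrand expands as $\eta_s + O(T-s)$ with the $O(T-s)$ correction deterministically bounded by \ref{assumption:D1}, contributing $O((T-t)^2)$ after integration. For the leading piece, $d\eta_t = b^\eta_t dt + \sigma^\eta_t dW_t$ and Fubini give
\[
    \int_t^T E[\eta_s \mid \cF_t] \, ds = (T-t)\eta_t + E\Big[\int_t^T (T-r) b^\eta_r \, dr \,\Big|\, \cF_t \Big] ,
\]
whose remainder is deterministically bounded in Frobenius norm by $K^\eta (T-t)^2/2$ by \ref{assumption:D0}. Dividing by $(T-t)^2$ yields $Y_t \le \eta_t/(T-t) + c_1\Id$ in PSD order, i.e.\ $\lambda_{\max}(Y_t - \eta_t/(T-t)) \le c_1$ uniformly.

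The lower bound is the delicate step: \cref{lem:lower_bound} carries a factor $e^{-4\delta_1 T}<1$ in front of $\eta_t/(T-t)$, which destroys the sharp leading coefficient as $t\to T$, so I argue directly from the value-function representation in \cref{prop:sol_sing_problem}. Completing the square by $\tilde u_s := u_s + \eta_s^{-1}\phi_s X^u_s$, the Schur complement $\lambda_s - \phi_s^\top \eta_s^{-1}\phi_s \ge 0$ (from \ref{assumption:A0}) together with $X^u_T = 0$ gives $J(t,x,u) \ge E[\int_t^T \tilde u^\top \eta \tilde u \, ds \mid \cF_t]$, while the state becomes $\dot X^u = \hat A X^u - \tilde u$ with $\hat A := A + \eta^{-1}\phi$ deterministically bounded by some $\delta_1 > 0$ (from \ref{assumption:A0} and \ref{assumption:D1}). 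Let $U$ solve $\dot U = -U\hat A$, $U_t = \Id$, and set $\hat u := U\tilde u$, $\hat X := U X^u$; then $\dot{\hat X}_s = -\hat u_s$, $\int_t^T \hat u_s \, ds = x$, and $\int_t^T \tilde u^\top \eta\tilde u \, ds = \int_t^T \hat u^\top \hat\eta \hat u\, ds$ with $\hat\eta := (U^{-1})^\top \eta U^{-1}$. Cauchy--Schwarz in $L^2([t,T];\R^d)$ yields the pointwise bound $\int_t^T \hat u^\top \hat\eta \hat u \, ds \ge \langle x, (\int_t^T \hat\eta_s^{-1} ds)^{-1} x\rangle$, and taking conditional expectations and applying the matrix Jensen inequality (convexity of $M\mapsto M^{-1}$ on $\mathbb S^d_{++}$) gives
\[
    Y_t \ge \Big(E\Big[\int_t^T \hat\eta_s^{-1} \, ds \,\Big|\, \cF_t\Big]\Big)^{-1} =: F_t^{-1} .
\]

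To conclude, Itô's formula for $\eta^{-1}$ (valid since \ref{assumption:A0} yields $\eta \ge \delta\Id$) with bounded drift and diffusion, together with the elementary estimate $\|U_s - \Id\|_2 \le \delta_1 e^{\delta_1 T}(s-t)$, shows $F_t = (T-t)\eta_t^{-1} + R_t$ with $\|R_t\|_2 \le C(T-t)^2$ deterministically. The identity $F_t^{-1} - \eta_t/(T-t) = -F_t^{-1} R_t \eta_t/(T-t)$, combined with the uniform lower bound $F_t \ge (T-t) K^{-1} e^{-2\delta_1 T}\Id$ (from $\hat\eta_s^{-1} \ge K^{-1} e^{-2\delta_1 T}\Id$), yields $\|F_t^{-1} - \eta_t/(T-t)\|_2 \le c_2$ uniformly, hence $Y_t \ge \eta_t/(T-t) - c_2\Id$. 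Combining with the upper bound gives the sandwich and the claim. The crux throughout is the lower bound: without removing the factor $e^{-4\delta_1 T}$ of \cref{lem:lower_bound} one only obtains $\|H_t\| = O(T-t)$ instead of $O((T-t)^2)$, which forces the bespoke Cauchy--Schwarz/Jensen route above; the secondary technicality is to verify that every Itô remainder in the expansions of $\int_t^T \eta_s \, ds$ and $\int_t^T \eta_s^{-1} \, ds$ is deterministically of order $(T-t)^2$, which is ensured by the uniform boundedness hypotheses \ref{assumption:D0}, \ref{assumption:D1}, and \ref{assumption:A0}.
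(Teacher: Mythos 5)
Your proof is correct, and it reaches the conclusion by a genuinely different route than the paper. You define $H_t:=(T-t)^2Y_t-(T-t)\eta_t$ outright and reduce the whole statement to a two-sided operator-order sandwich $\eta_t/(T-t)-c_2\Id\le Y_t\le \eta_t/(T-t)+c_1\Id$: the upper half from the a priori estimate of \cref{lem:uniform_upper_bound} (transferred to the singular minimal supersolution by \cref{prop:sol_convergence}), the lower half from a completion-of-the-square/Cauchy--Schwarz/matrix-Jensen argument applied to the value-function representation of \cref{prop:sol_sing_problem}. Your diagnosis of the obstruction in \cref{lem:lower_bound} is exactly right: the prefactor $e^{-4\delta_1 T}$ spoils the leading coefficient and would only give $\|H_t\|=O(T-t)$. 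The paper removes it differently, by first conjugating the entire problem with the flow of $-\hat A$ started at time $0$ (\cref{rem:auxilary_rem_sec4.2}) so that $A=\phi=0$ and $\delta_1=0$ in the transformed equation; you instead start the flow at time $t$, so that $U_s=\Id+O(s-t)$ and the transformed $\hat\eta_s^{-1}$ can be compared directly to $\eta_t^{-1}$ -- both devices work. Beyond that, the paper takes the longer road of writing down the BSDE~\eqref{eq:BSDE_H} satisfied by $H$ (with the singularity moved into the driver), solving it near $T$ by a contraction in the weighted norm $\sup_t\esssup_\omega\|H_t\|/(T-t)^2$, and identifying the fixed point with the minimal supersolution (\cref{lem:existence_asymp_sol} and \cref{lem:min_asymp_sol}). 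That route additionally yields the dynamics and uniqueness of $H$, which the paper exploits in the subsequent remarks (recovering the uncorrelated-multiplicative-increments formula, controlling the optimal state); your route is shorter and more elementary but delivers only the bound $\|H_t\|\le c(T-t)^2$, which is all the proposition asserts. Two small points you should make explicit: the passage from ``for each $t$, $P$-a.s.'' to ``$P$-a.s.\ for all $t$'' (via continuity of $Y$ and $\eta$ and a countable dense set of times and of directions $x$), and the fact that the terminal cost $\langle X_T,\theta X_T\rangle$ drops out of your lower bound because $C=\{0\}$ forces $X_T=0$ for any admissible control.
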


As mentioned in the introduction of this section, this result generalizes the asymptotic expansion of~$Y$ to the multidimensional setting. Remark that it is obtained under the same assumptions as in the one-dimensional case (see \cite{caci:deni:popi:25} and \cite[Section 4.1]{grae:popi:21}). 

We split the proof of \cref{prop:asymp_expansion} into two lemmas. 
But first of all we reduce in \cref{rem:auxilary_rem_sec4.2} the problem to the case where $\phi$ and $A$ are the zero processes.
To this end, we introduce (under \ref{assumption:A0}, \ref{assumption:D0}, and \ref{assumption:D1}) the following auxiliary processes:

Let $\hat{A}=(\hat{A}_t)_{t\in[0,T]}$ be defined by 
$$\hat{A}_t= A_t + \eta_t^{-1} \phi_t, \quad t\in[0,T],$$ 
and note that \ref{assumption:A0} and \ref{assumption:D1} ensure that there exists $\delta_1 \in(0,\infty)$ such that $P$-a.s.\ for all $t\in[0,T]$ it holds $\lVert \hat{A}_t \rVert \le \delta_1$. 
Let $U=(U_t)_{t\in[0,T]}$ be the unique solution of 
$$dU_t=-U_t \hat{A}_t dt, \quad t\in[0,T], \quad U_0=\Id.$$ 
Note that since $\hat{A}$ is bounded, also $U$ and $U^{-1}$ are bounded. 
Furthermore, define $\widetilde{\lambda}=(\wt\lambda_t)_{t \in [0,T]}$ and $\widetilde{\eta}=(\wt\eta_t)_{t \in [0,T]}$ by 
\begin{equation*}
    \begin{split}
        &\wt\lambda_t= (U_t^\top)^{-1} (\lambda_t-\phi_t^\top \eta_t^{-1} \phi_t)U_t^{-1}, \quad t\in[0,T], \\
        & \wt\eta_t= (U_t^\top)^{-1} \eta_t U_t^{-1}, \quad t\in[0,T].
    \end{split}
\end{equation*}
Observe that \ref{assumption:A0} implies that $\wt\lambda$ is $\mathbb{S}_+^d$-valued, and \ref{assumption:A0} and \ref{assumption:D1} guarantee that there exist $\wt K, \wt \delta \in (0,\infty)$ such that $P$-a.s.\ for all $t\in[0,T]$ it holds $\lVert \wt\lambda_t \rVert \le \wt{K}$, $\lVert \wt\eta_t \rVert \le \wt{K}$, and $\wt \eta_t \ge \wt \delta \Id$. 
In addition, define $b^{\wt\eta}=(b^{\wt\eta}_t)_{t\in[0,T]}$ and $\sigma^{\wt\eta}=(\sigma^{\wt\eta}_t)_{t\in[0,T]}$ by 
\begin{equation*}
    \begin{split}
        &b^{\wt\eta}_t= (U_t^\top)^{-1} \big( b_t^{\eta} + \eta_t \hat{A}_t + \hat{A}_t^\top \eta_t \big) U_t^{-1}, \quad t\in[0,T], \\
        & \sigma^{\wt\eta}_t= (U_t^\top)^{-1} \sigma^{\eta}_t U_t^{-1}, \quad t\in[0,T].
    \end{split}
\end{equation*} 
By \ref{assumption:A0}, \ref{assumption:D0}, and \ref{assumption:D1} there exists $K^{\wt \eta} \in (0,\infty)$ such that $P$-a.s.\ for all $t\in[0,T]$ it holds $\lVert b^{\wt\eta}_t \rVert \le K^{\wt\eta}$ and $\lVert \sigma^{\wt\eta}_t \rVert \le K^{\wt\eta}$.
Moreover, it follows from \ref{assumption:D0} that $b^{\wt\eta}$ and $\sigma^{\wt\eta}$ are $\mathbb{S}^d$-valued and for all $t\in[0,T]$ it holds $d\wt \eta_t = b^{\wt \eta}_t dt + \sigma^{\wt\eta}_t dW_t$.

\begin{remark}\label{rem:auxilary_rem_sec4.2}
    Let \ref{assumption:A0}, \ref{assumption:D0}, and \ref{assumption:D1} be satisfied. 

    (i) 
    We remark that if $(Y,Z)$ is a solution to the BSDE~\eqref{eq:BSDE_sing} in the sense of \cref{def:supersol_bsde}, then $(\wt Y, \wt Z)$ given by $\wt Y = (U^\top)^{-1} Y U^{-1}$ and $\wt Z = (U^\top)^{-1} Z U^{-1}$ is a solution to the BSDE 
    \begin{equation}\label{eq:auxiliary_BSDE_sec4}
      d \widetilde Y_t  =  \widetilde Y_t  \widetilde \eta^{-1}_t   \widetilde Y_t dt -  \widetilde \lambda_t dt + \widetilde Z_t  d W_t, \quad t \in [0,T), \quad
        \wt Y_T=\infty \Ind_{C^c}.  
    \end{equation}
    in the sense of \cref{def:supersol_bsde}, and vice versa.

    (ii) 
    Let $(Y,Z)$ be the minimal supersolution to the BSDE~\eqref{eq:BSDE_sing} in the sense of \cref{def:supersol_bsde}.  
    Let $(\wt Y,\wt Z)$ be the minimal supersolution to the BSDE~\eqref{eq:auxiliary_BSDE_sec4} in the sense of \cref{def:supersol_bsde} and let $\wt H\colon [0,T] \times \Omega \to \mathbb{S}^d$ satisfy $\wt H_t = (T-t)^2 \wt Y_t - (T-t) \wt\eta_t$, $t\in[0,T)$. 
    Then we can show that $H\colon [0,T] \times \Omega \to \mathbb{S}^d$ defined by $H_t=U_t^\top \wt H_t U_t$, $t\in[0,T]$, satisfies~\eqref{eq:asymp_expansion2}.
\end{remark}

In view of \cref{rem:auxilary_rem_sec4.2} we consider in \cref{lem:existence_asymp_sol} and \cref{lem:min_asymp_sol} the case where $A$ and $\phi$ are zero. 
In the sequel, we consider the BSDE 
\begin{equation}  \label{eq:BSDE_H}
    \begin{split}
        & d\widetilde H_t = \bigg( \dfrac{1}{(T-t)^2} \widetilde H_t \widetilde \eta_t^{-1}\widetilde  H_t - (T-t)^2 \widetilde \lambda_t - (T-t)b^{\widetilde \eta}_t \bigg) dt + Z^{\widetilde H}_t dW_t, 
    \quad t \in [0,T], \\
    &\widetilde H_T = 0  \Id.
    \end{split}
\end{equation}

\begin{remark}\label{rem:HfromasympexpansionresultsatisfiesBSDE}
    In the setting of \cref{prop:asymp_expansion} with $A$ and $\phi$ equal to zero, the process $H$ satisfies for all $t \in [0,T)$ the dynamics in~\eqref{eq:BSDE_H} with $Z^H_t=-(T-t)\sigma^{\eta}_t+(T-t)^2 Z_t$, $t\in[0,T)$. 
\end{remark}

To prove \cref{prop:asymp_expansion}, we take the BSDE~\eqref{eq:BSDE_H} as a starting point.
We establish in \cref{lem:existence_asymp_sol} by a contraction argument that there exists a unique solution of~\eqref{eq:BSDE_H} in the following sense.

\begin{defi}\label{def:sol_bsde_sing_gen}
    We call a pair of progressively measurable processes $(H, Z^{H})\colon [0,T]\times \Omega \to \mathbb{S}^d \times \mathbb{S}^d$ a solution of the BSDE~\eqref{eq:BSDE_H} if 
    $$ \sup_{t \in [0,T)} \esssup_{\omega \in \Omega} \frac{\| H_t(\omega)  \|}{(T - t)^2} + E\bigg[\int_0^T \lVert Z^{H}_t \rVert^2 dt  \bigg] < \infty$$
    and if it holds $P$-a.s.\ for all $t \in [0,T]$ that 
    \begin{equation}\label{eq:bsde_sing_gen_eq}
        H_t = - \int_t^T \bigg( \frac{1}{(T-s)^2} H_s \wt \eta_s^{-1} H_s - (T-s)^2 \wt\lambda_s - (T-s) b_s^{\wt\eta} \bigg) ds - \int_t^T Z^H_s dW_s . 
    \end{equation}  
\end{defi}

Note in particular that this solution notion incorporates the desired property of $H$ in \cref{prop:asymp_expansion} that there exists a constant $c \in (0,\infty)$ such that $P$-a.s.\  for all $t\in[0,T]$ it holds $\lVert H_t \rVert \le c (T-t)^2$. 
Moreover, we remark that the singularity has shifted from the terminal condition in~\eqref{eq:BSDE_sing} to the driver in~\eqref{eq:BSDE_H}. 

Once we have established uniqueness and existence of~\eqref{eq:BSDE_H}, we use the solution component $H$ of the BSDE~\eqref{eq:BSDE_H} to define $Y$ via~\eqref{eq:asymp_expansion2}.
We show in \cref{lem:min_asymp_sol} that $Y$ defined like this is equal to the first component of the minimal solution to the singular BSDE~\eqref{eq:BSDE_sing} with $A$ and $\phi$ equal to zero.

\begin{lemma}\label{lem:existence_asymp_sol}
Assume that \ref{assumption:A0}, \ref{assumption:D0}, and \ref{assumption:D1} are satisfied and that $A$ and $\phi$ are zero.  
Then there exists a unique solution $(H, Z^{H})$ to the BSDE~\eqref{eq:BSDE_H} in the sense of \cref{def:sol_bsde_sing_gen}.
\end{lemma}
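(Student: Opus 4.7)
The plan is to set up a Banach fixed-point argument on a short terminal interval $[T-\tau, T]$ and then extend classically to $[0, T-\tau]$. For $\tau \in (0, T)$ to be chosen, I will introduce the Banach space $\mathcal{B}_\tau$ of pairs $(H, Z^H)$ of progressively measurable $\mathbb{S}^d \times \mathbb{S}^d$-valued processes on $[T-\tau, T] \times \Omega$ equipped with the norm
$$\|(H, Z^H)\|_\tau := \sup_{t\in[T-\tau,T)} \esssup_{\omega\in\Omega} \frac{\|H_t(\omega)\|}{(T-t)^2} + \bigg(E\bigg[\int_{T-\tau}^T \|Z^H_s\|^2\, ds\bigg]\bigg)^{1/2}.$$
Writing $f(s, h) := (T-s)^{-2}\, h\, \wt\eta_s^{-1}\, h - (T-s)^2 \wt\lambda_s - (T-s) b_s^{\wt\eta}$, I will define on the closed ball $B_M \subset \mathcal{B}_\tau$ of radius $M$ the map $\Phi(H', Z') := (H, Z^H)$ via $H_t := -E[\int_t^T f(s, H'_s)\, ds \mid \cF_t]$, with $Z^H$ arising from the Brownian martingale representation applied to $t\mapsto E[\int_{T-\tau}^T f(s, H'_s)\, ds \mid \cF_t]$. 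Symmetry of the output is automatic because $f(s, \cdot)$ maps $\mathbb{S}^d$ into $\mathbb{S}^d$.

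The core technical work will be invariance and contraction of $\Phi$ on $B_M$. Using $\|H'_s\| \leq M(T-s)^2$, $\|\wt\eta_s^{-1}\| \leq \sqrt{d}/\wt\delta$, and the uniform bounds on $\wt\lambda$ and $b^{\wt\eta}$, one expects
$$\|H_t\| \leq \frac{M^2 \sqrt{d}}{3\wt\delta}(T-t)^3 + \frac{\wt K}{3}(T-t)^3 + \frac{K^{\wt\eta}}{2}(T-t)^2,$$
so that the weighted supremum part of $\|\Phi(H', Z')\|_\tau$ is at most $K^{\wt\eta}/2 + \tau(\wt K + M^2\sqrt{d}/\wt\delta)/3$; fixing $M := K^{\wt\eta}$ and shrinking $\tau$ keeps this below $M$. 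For contraction, the identity $h_1 \wt\eta^{-1} h_1 - h_2 \wt\eta^{-1} h_2 = (h_1 - h_2) \wt\eta^{-1} h_1 + h_2 \wt\eta^{-1} (h_1 - h_2)$ together with the ball bound yields $\|f(s, H'_s) - f(s, \bar H'_s)\| \leq (2M\sqrt{d}/\wt\delta)(T-s)^2 \|(H', Z') - (\bar H', \bar Z')\|_\tau$; integrating in $s$ gives $\|H - \bar H\|_* \leq (2M\sqrt{d}\tau/(3\wt\delta))\|(H', Z') - (\bar H', \bar Z')\|_\tau$, while It\^o's isometry applied to the associated martingale difference controls the $Z^H$-component by an even higher power of $\tau$. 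For $\tau$ sufficiently small $\Phi$ will be a strict contraction on $B_M$, and Banach's fixed point theorem will deliver a unique $(H, Z^H) \in B_M$ satisfying~\eqref{eq:bsde_sing_gen_eq} on $[T-\tau, T]$.

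To extend to $[0, T-\tau]$ and obtain global uniqueness, I will observe that $H_{T-\tau}$ is $P$-a.s.\ bounded by $M\tau^2$ and that on $[0, T-\tau]$ the driver of~\eqref{eq:BSDE_H} is non-singular with uniformly bounded coefficients. Setting $Y_t := (T-t)^{-2}(H_t + (T-t)\wt\eta_t)$ will transform~\eqref{eq:BSDE_H} on $[0, T-\tau]$ into a Riccati BSDE of the form~\eqref{eq:BSDE_pen2} (with $\phi = 0$ and $A = 0$) with bounded terminal condition $Y_{T-\tau}$ and bounded coefficients, and \cref{prop:sol_pen_problem} will yield a unique solution there; reversing the transformation and concatenating with the short-interval solution produces the solution on $[0, T]$. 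For global uniqueness, any two solutions $(H^i, Z^i)_{i=1,2}$ in the sense of \cref{def:sol_bsde_sing_gen} satisfy $\|H^i_t\| \leq C(T-t)^2$ for some common $C \in (0, \infty)$, so after shrinking $\tau$ depending on $C$ both lie in $B_C$ on $[T-\tau, T]$ and coincide there by the fixed-point uniqueness, and then on $[0, T-\tau]$ by \cref{prop:sol_pen_problem}. The main technical difficulty will be the simultaneous balancing of $M$ (large enough to absorb the inhomogeneity $(T-s)b_s^{\wt\eta}$, forcing $M \gtrsim K^{\wt\eta}$) and the contraction constant $2M\sqrt{d}\tau/(3\wt\delta)$ (which must be strictly less than one); this is feasible precisely because the quadratic term $(T-s)^{-2} H\wt\eta^{-1} H$ carries the weight $(T-s)^2$ relative to the size of $H$, so that shrinking $\tau$ after fixing $M$ always simultaneously restores invariance and contractivity.
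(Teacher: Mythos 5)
Your existence argument is essentially the paper's: the same weighted space with norm $\sup_t \esssup_\omega \|H_t(\omega)\|/(T-t)^2$, the same conditional-expectation fixed-point map on a short terminal interval $[T-\tau,T]$ (the paper runs the contraction on $H$ alone and recovers $Z^H$ afterwards by martingale representation rather than carrying $Z$ in the fixed-point space, which is immaterial), and the same extension to $[0,T-\tau]$ by transforming back to the Riccati BSDE~\eqref{eq:BSDE_pen2} with bounded data. One point you must add in the extension step: \cref{prop:sol_pen_problem} (via \cref{def:sol_bsdes}) requires the terminal condition to satisfy $0\le\bar\xi$, and since you only know $H_{T-\tau}\ge -M\tau^2\,\Id$, positivity of $Y_{T-\tau}=\tau^{-1}\wt\eta_{T-\tau}+\tau^{-2}H_{T-\tau}$ forces the additional constraint $\tau\le\wt\delta/M$. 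This is exactly why the paper picks the splitting time so that $\delta/(T-t_0)\ge R$. (Also, your choice $M=K^{\wt\eta}$ silently assumes $K^{\wt\eta}>0$ so that the $\wt\lambda$-contribution can be absorbed by shrinking $\tau$; the paper avoids this by bundling $(T-t)\lambda_t+b^\eta_t$ into one bounded process $\gamma$ and taking $R=K^\gamma$.)

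Your uniqueness argument is genuinely different from the paper's, and its last step does not go through as cited. The paper proves uniqueness globally on $[0,T]$ by linearizing the difference of two solutions, introducing the flow $V_{t,s}$ generated by the bounded process $(T-s)^{-2}\eta_s^{-1}\wh H_s$, and reading off $\Delta H_t\le 0$ (and, by symmetry, $\ge 0$) from the sign of the quadratic remainder. You instead localize: contraction gives uniqueness on $[T-\tau,T]$, which is fine since any solution in the sense of \cref{def:sol_bsde_sing_gen} with constant $C$ is a fixed point of $\Phi$ on the ball of radius $C$ once $\tau$ is shrunk accordingly. But you then invoke \cref{prop:sol_pen_problem} on $[0,T-\tau]$, whose uniqueness statement is within the class of $\mathbb{S}^d_+$-valued solutions; the transform $Y^2_t=(T-t)^{-2}(H^2_t+(T-t)\wt\eta_t)$ of an arbitrary competitor solution is only known to be positive semidefinite near $T$ (where $(T-t)^{-1}\wt\delta$ dominates $C$), not on all of $[0,T-\tau]$, so the citation does not apply. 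The gap is easy to close without it: on $[0,T-\tau]$ both solutions are uniformly bounded and agree at $T-\tau$, the driver difference linearizes with bounded coefficients, and a standard Gronwall/linear-BSDE argument gives $H^1=H^2$ there. With these two repairs your proof is complete; the paper's comparison-based uniqueness buys a single argument valid on all of $[0,T]$ at once, while yours is more elementary on the terminal piece but needs the extra patch on the non-singular piece.
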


\begin{proof}
Let us define the process $\gamma=(\gamma_t)_{t \in [0,T]}$ by 
$\gamma_t = (T-t)\lambda_t +  b^{\eta}_t$, $t\in [0,T]$.
Note that \ref{assumption:D0} and \ref{assumption:D1} ensure that the process $\gamma$ is $\mathbb S^d$-valued and that there exists $K^{\gamma} \in (0,\infty)$ such that $P$-a.s.\ for all $t \in [0,T]$ we have $\lVert \gamma_t \rVert \le K^{\gamma}$.  
For all $\tau \in (0,T)$ we define $\mathcal H_\tau$ as the set of all adapted and bounded processes 
$H\colon [T-\tau,T] \times \Omega \to \mathbb{S}^d$ such that there exists $c > 0$ such that for all $t\in [T-\tau,T]$ it holds $\|H_t\|\leq c (T-t)^2$. We endow $\mathcal H_\tau$ with the norm 
$$\| H\|_{\mathcal H_\tau} = \sup_{t \in [T-\tau,T)} \esssup_{\omega \in \Omega} \dfrac{\|H_t(\omega)\|}{(T-t)^2}.$$
Furthermore, for all $\tau \in (0,T)$ and $R \in (0,\infty)$ we define $\mathcal{H}_{\tau,R}$ as the set of all processes $H\in\mathcal{H}_{\tau}$ that satisfy $\| H\|_{\mathcal H_\tau} \le R$. 
On $\mathcal H_\tau$, we define the operator $\Gamma$ as follows: for all $H\in\mathcal{H}_{\tau}$ and $t\in [T-\tau,T]$ let 
$$\Gamma(H)_t= E \bigg[\int_t^T \bigg((T-s)\gamma_s - \dfrac{1}{(T-s)^2} H_s \eta_s^{-1}H_s  \bigg) ds \,\bigg|\, \mathcal F_t \bigg].$$
Note that for all $t\in [T-\tau,T]$ it holds
$$\|\Gamma(0)_t\|  \leq E\bigg[\int_t^T (T-s) \|\gamma_s \| ds \,\bigg|\, \mathcal F_t \bigg] \leq \dfrac{K^{\gamma}}{2} (T-t)^2.$$
Thus, 
\begin{equation}\label{eq:boundGammaZero}
    \|\Gamma(0)\|_{\mathcal H_\tau} \leq \dfrac{K^{\gamma}}{2}. 
\end{equation}
For all $H, \wh H \in \cH_{\tau}$ set $\Delta H = H - \widehat H$ 
and note that for all $t\in [T-\tau, T]$ it holds 
\begin{align*}
& \Gamma(H)_t - \Gamma(\widehat H)_t \\ 
& = -  E\bigg[\int_t^T \dfrac{1}{(T-s)^2} \bigg( H_s \eta_s^{-1}H_s - \widehat H_s \eta_s^{-1} \widehat H_s \bigg) ds \,\bigg|\, \mathcal F_t \bigg] \\
& = - E\bigg[\int_t^T \dfrac{1}{(T-s)^2}  \bigg((\Delta H_s) \eta_s^{-1} \widehat H_s + \widehat H_s \eta_s^{-1} (\Delta H_s) 
+\Delta H_s \eta_s^{-1} \Delta H_s 
\bigg) ds \,\bigg|\, \mathcal F_t \bigg] .
\end{align*}
Since $P$-a.s.\ for all $t\in[0,T]$ it holds $\lVert \eta_t^{-1} \rVert \le \sqrt{d} \delta^{-1}$, 
we obtain for all $H, \wh H \in \cH_{\tau,R}$ that $P$-a.s.\ for all $t\in [T-\tau, T]$ it holds  
\begin{align*}
\| \Gamma(H)_t - \Gamma(\widehat H)_t \| & \leq (T-t)^3 
\dfrac{4 R \sqrt{d}}{3\delta}  
\|\Delta H\|_{\mathcal H_\tau}.
\end{align*}
It follows for all $H, \wh H \in \cH_{\tau,R}$ that 
\begin{equation}\label{eq:estimateDiffGammaHwhH}
    \| \Gamma(H) - \Gamma(\widehat H) \|_{\mathcal H_\tau} 
    \leq \tau
    \dfrac{4R\sqrt{d}}{3\delta} \|\Delta H\|_{\mathcal H_\tau} .
\end{equation}
This and \eqref{eq:boundGammaZero} imply for all $H\in\cH_{\tau,R}$ that 
\begin{equation}\label{eq:estimateGammaHtaunorm}
    \| \Gamma(H)  \|_{\mathcal H_\tau}
    \leq \| \Gamma(H) - \Gamma(0) \|_{\mathcal H_\tau} + \| \Gamma(0)  \|_{\mathcal H_\tau} 
    \leq \tau 
    \dfrac{4R \sqrt{d}}{3\delta}
    R + \dfrac{K^{\gamma}}{2} .
\end{equation}
Choose $R = K^{\gamma}$ and $\tau \in (0,T)$ such that 
$\tau  
\frac{4K^{\gamma} \sqrt{d}}{3\delta} 
\leq \frac{1}{2}$. 
We then have from~\eqref{eq:estimateGammaHtaunorm} and~\eqref{eq:estimateDiffGammaHwhH} 
for all $H, \wh H \in \cH_{\tau,R}$ that 
$\| \Gamma(H)  \|_{\mathcal H_\tau}\leq R$ and 
$\| \Gamma(H) - \Gamma(\widehat H) \|_{\mathcal H_\tau}  \leq \frac{1}{2} \|\Delta H\|_{\mathcal H_\tau}$. 
By a fixed point argument, there exists a unique process $H' \in \cH_{\tau,R}$ such that $H'=\Gamma( H')$.  
Moreover, the martingale representation theorem ensures that there exists a progressively measurable process $Z^{H'}\colon [T-\tau,T] \times \Omega \to \mathbb{S}^d$ such that $E[\int_{T-\tau}^T \lVert Z^{H'}_s \rVert^2 ds ] < \infty$ and $P$-a.s.\ for all $t\in[T-\tau,T]$ it holds 
\begin{equation*}
    E \bigg[\int_{T-\tau}^T \bigg((T-s)\gamma_s - \dfrac{1}{(T-s)^2} H'_s \eta_s^{-1} H'_s  \bigg) ds \,\bigg|\, \mathcal F_t \bigg]
    = 
    H'_{T-\tau} + \int_{T-\tau}^t Z_s^{H'} dW_s .
\end{equation*}
It follows that $(H',Z^{H'})$ is a solution of the BSDE~\eqref{eq:BSDE_H} on $[T-\tau,T]$. 

We next establish existence on the whole interval $[0,T]$. 
Since $H' \in \cH_{\tau,R}$, we have $P$-a.s.\ for all $t\in[T-\tau,T)$ that 
$H'_{t} \ge - R (T-t)^2 \Id$. 
This and \ref{assumption:A0} guarantee that there exists $t_0 \in [T-\tau,T)$ such that 
$$\dfrac{1}{T-t_0} \eta_{T-t_0} + \dfrac{1}{(T-t_0)^2} H'_{T-t_0} 
\ge \Big(\frac{\delta}{T-t_0} - R \Big) \Id 
\ge 0.$$
Consider the BSDE~\eqref{eq:BSDE_pen2} with $T$ replaced by $T-t_0$ and with $A=0$, $\phi=0$, $\bar{\xi}= \frac{1}{T-t_0} \eta_{T-t_0} + \frac{1}{(T-t_0)^2} H'_{T-t_0}$. 
By \cite[Theorem~6.1]{sunxiongyong2021indefinite} (see also \cref{prop:sol_pen_problem}) there exists a unique solution $(Y,Z)$ of this BSDE in the sense of \cref{def:sol_bsdes} and $Y$ is bounded. 
Define $H \colon [0,T] \times \Omega \to \mathbb{S}^d$ by $H_t=H'_t$, $t\in[T-t_0,T]$, and 
$H_t = (T-t)^2 Y_t - (T-t) \eta_t$, $t\in[0,T-t_0)$. 
Moreover, define $Z^H \colon [0,T] \times \Omega \to \mathbb{S}^d$ by $Z^H_t=Z^{H'}_t$, $t\in[T-t_0,T]$, and 
$Z^H_t = -(T-t)\sigma^{\eta}_t+(T-t)^2 Z_t$, $t\in[0,T-t_0)$. 
It then holds that $(H,Z^H)$ is a solution of the BSDE~\eqref{eq:BSDE_H} in the sense of \cref{def:sol_bsde_sing_gen}. 

To finish the proof, let us show that uniqueness holds. 
Suppose that $(\wh H, Z^{\wh H})$ is another solution of the BSDE~\eqref{eq:BSDE_H} in the sense of \cref{def:sol_bsde_sing_gen}, and recall the notation 
$\Delta H = H -  \widehat H$.  
Note that for all $t \in [0,T]$ it holds 
    \begin{align*}
        d(\Delta H_t) 
        & = \! \bigg( \dfrac{1}{(T-t)^2} (\Delta H_t) \eta_t^{-1} (\Delta H_t)  
        + 
        \dfrac{1}{(T-t)^2} (\Delta H_t ) \eta_t^{-1} \wh H_t 
        + 
        \dfrac{1}{(T-t)^2} \wh H_t \eta_t^{-1} 
        (\Delta H_t) 
        \! \bigg) dt  \\
        & \quad + (Z^{H}_t - Z^{\wh H}_t) dW_t .
    \end{align*}
Let $\Theta = (\Theta_t)_{t\in[0,T]}$
and $\psi=(\psi_t)_{t \in [0,T]}$ be defined by 
$\Theta_T=0$, $\psi_T=0$, and 
   $$
        \Theta_t = \dfrac{1}{(T-t)^2} \Delta H_t \eta_t^{-1} \Delta H_t, \qquad 
        \psi_t  = \dfrac{1}{(T-t)^2} \eta_t^{-1} \wh H_t ,\quad t \in [0,T).  
   $$ 
Observe that $\Theta$ is $\mathbb{S}_+^d$-valued. 
Moreover, note that $\psi$ is bounded since there exists $\wh c \in (0,\infty)$ such that $P$-a.s.\ for all $t\in[0,T]$ it holds $\lVert \wh H_t \rVert \le \wh c (T-t)^2$. Similarly, also $\Theta$ is bounded. 
Furthermore, for all $t \in [0,T]$ let $(V_{t,s})_{s \in [t,T]}$ be the unique solution of $V_{t,s}=\Id - \int_t^s \psi_r V_{t,r} dr$, $s \in [t,T]$, and note that $E[\sup_{s \in [t,T]} \lVert V_{t,s} \rVert^4 ]<\infty$.
Integration by parts shows for all $t \in [0,T]$ that 
    \begin{equation}\label{eq:DeltaHwithV}
        \Delta H_t 
        = - \int_t^T V_{t,r}^\top \Theta_r V_{t,r} dr 
        - \int_t^T V_{t,r}^\top (Z^H_r - Z^{\wh H}_r) V_{t,r} dW_r .
    \end{equation}
Since $E[\sup_{s \in [t,T]} \lVert V_{t,s} \rVert^4 ]<\infty$ for all $t \in [0,T]$ and $Z^H, Z^{\wh H} \in L^2(\Omega \times [0,T])$, we have for all $t\in[0,T]$ that 
$E[ \int_t^T V_{t,r}^\top (Z^H_r - Z^{\wh H}_r) V_{t,r} dW_r \,|\,\cF_t]=0$.
This, \eqref{eq:DeltaHwithV}, and the fact that $\Theta$ is $\mathbb{S}_+^d$-valued imply for all $t \in [0,T]$ that $\Delta H_t \le 0$.    
Vice versa, we obtain $H = \widehat H$.
This moreover implies that $Z^H=Z^{\wh H}$. 
\end{proof}

\begin{lemma}\label{lem:min_asymp_sol}
   Assume that \ref{assumption:A0}, \ref{assumption:D0}, and \ref{assumption:D1} are satisfied and that $A$ and $\phi$ are zero. 
    Let $(H,Z^H)$ be the unique solution of the BSDE~\eqref{eq:BSDE_H} in the sense of \cref{def:sol_bsde_sing_gen}. 
    Let $Y=(Y_t)_{t\in[0,T)}$ be defined by~\eqref{eq:asymp_expansion2} and let $(Y',Z')$ be the minimal solution of the singular BSDE~\eqref{eq:BSDE_sing} in the sense of \cref{def:supersol_bsde}. Then it holds that $Y=Y'$. 
\end{lemma}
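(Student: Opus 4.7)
My plan is to establish the two inclusions $Y' \leq Y$ and $Y \leq Y'$ separately, using the BSDE \eqref{eq:BSDE_H} as the central connecting object.

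First, I would verify that the pair $(Y, Z)$, where $Z_t := \tfrac{1}{T-t}\sigma^{\eta}_t + \tfrac{1}{(T-t)^2} Z^{H}_t$, is a supersolution of the singular BSDE \eqref{eq:BSDE_sing} (with $A = \phi = 0$ and $C = \{0\}$) in the sense of \cref{def:supersol_bsde}. Applying It\^o's formula to $Y_t = \tfrac{\eta_t}{T-t} + \tfrac{H_t}{(T-t)^2}$, using the dynamics of $\eta$ from \ref{assumption:D0} and the BSDE \eqref{eq:BSDE_H} for $H$, the drift of $Y$ collapses to $Y_t \eta_t^{-1} Y_t - \lambda_t$ via the purely algebraic identity
\[
Y_t \eta_t^{-1} Y_t = \frac{\eta_t}{(T-t)^2} + \frac{2 H_t}{(T-t)^3} + \frac{H_t \eta_t^{-1} H_t}{(T-t)^4}.
\]
The singular terminal condition follows from $\eta_t \geq \delta \Id$ and the bound $\|H_t\| \leq c(T-t)^2$, which force $\lambda_{\min}(Y_t) \geq \tfrac{\delta}{T-t} - c \to \infty$, so $\langle \psi_t, Y_t \psi_t\rangle \to \infty$ along any path left-continuous at $T$ with $\psi_T \neq 0$. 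The $L^2$-integrability on $[0,t]$ for every $t < T$ follows from the boundedness of the coefficients together with $Z^H \in L^2(\Omega \times [0,T])$. Minimality of $(Y', Z')$ thus gives $Y'_t \le Y_t$ on $[0,T)$.

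For the reverse inequality, the natural strategy is to set
\[
H'_t := (T-t)^2 Y'_t - (T-t) \eta_t, \qquad Z^{H'}_t := (T-t)^2 Z'_t - (T-t) \sigma^{\eta}_t, \quad t \in [0,T),
\]
and show that $(H', Z^{H'})$ solves the BSDE \eqref{eq:BSDE_H} in the sense of \cref{def:sol_bsde_sing_gen}, so that the uniqueness part of \cref{lem:existence_asymp_sol} forces $H' = H$ and hence $Y'_t = \tfrac{\eta_t}{T-t} + \tfrac{H_t}{(T-t)^2} = Y_t$ on $[0,T)$. A direct It\^o computation symmetric to the one above, combining the Riccati dynamics of $Y'$ on $[0,T)$ with the dynamics of $\eta$, shows that $(H', Z^{H'})$ does satisfy the dynamics in \eqref{eq:BSDE_H} on $[0,T)$; extending by $H'_T = 0$ and using the bound proved below, one deduces that \eqref{eq:bsde_sing_gen_eq} holds up to $T$.

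The delicate step, and the main obstacle, is the admissibility requirement $\sup_{t < T} \esssup_{\omega} \tfrac{\|H'_t(\omega)\|}{(T-t)^2} < \infty$ together with $Z^{H'} \in L^2(\Omega \times [0,T])$. The upper bound is relatively soft: the a priori estimate of \cref{lem:uniform_upper_bound}, inherited by the minimal supersolution through the monotone limit in \cref{prop:sol_convergence}, combined with the elementary It\^o expansion $E[\int_t^T \eta_s\, ds \mid \cF_t] = (T-t)\eta_t + R_t$ with $\|R_t\| \lesssim (T-t)^2$ (by \ref{assumption:D0}), yields $Y'_t \le \tfrac{\eta_t}{T-t} + c_1 \Id$ and hence $H'_t \le c_1 (T-t)^2 \Id$. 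The matching lower bound is more subtle. \cref{lem:lower_bound}, applicable with $\delta_1 = 0$ since $A = \phi = 0$, gives $Y'_t \ge E[(\int_t^T \eta_s^{-1}\, ds)^{-1} \mid \cF_t]$, and I would extract the leading behaviour via It\^o's formula for the matrix inverse,
\[
d\eta_t^{-1} = -\eta_t^{-1}\bigl(b^{\eta}_t\, dt + \sigma^{\eta}_t\, dW_t\bigr)\eta_t^{-1} + \eta_t^{-1}\sigma^{\eta}_t \eta_t^{-1}\sigma^{\eta}_t \eta_t^{-1}\, dt,
\]
whose drift and diffusion are bounded under \ref{assumption:A0}, \ref{assumption:D0}, \ref{assumption:D1}. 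Writing $\int_t^T \eta_s^{-1}\, ds = M + Q_t$ with $M := (T-t)\eta_t^{-1}$, conditional moment estimates give $\|E[Q_t \mid \cF_t]\| \lesssim (T-t)^2$ and $E[\|Q_t\|^2 \mid \cF_t] \lesssim (T-t)^3$. Expanding via the resolvent identity $(M+Q_t)^{-1} = M^{-1} - M^{-1} Q_t M^{-1} + M^{-1}Q_t M^{-1} Q_t (M+Q_t)^{-1}$ and using $\|M^{-1}\|_2, \|(M+Q_t)^{-1}\|_2 \le K/(T-t)$ (from $\eta \le K\Id$), each term in the conditional expectation is bounded uniformly in $t$, producing $Y'_t \ge \tfrac{\eta_t}{T-t} - c_2 \Id$ and hence $H'_t \ge -c_2 (T-t)^2 \Id$. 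The $L^2$-property of $Z^{H'}$ on $[0,T]$ is then obtained from It\^o's formula applied to $\|H'\|^2$ on $[0,T)$, exploiting the uniform $(T-t)^{-2}$-bound on $H'$ and the boundedness of $\lambda$, $b^\eta$, and $\sigma^\eta$. With both bounds in hand, $(H', Z^{H'})$ lies in the solution class of \cref{def:sol_bsde_sing_gen}, and uniqueness in \cref{lem:existence_asymp_sol} closes the argument.
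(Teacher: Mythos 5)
Your proposal follows essentially the same two-step architecture as the paper's proof: first show that $(Y,Z)$ with $Z_t=\tfrac{1}{T-t}\sigma^\eta_t+\tfrac{1}{(T-t)^2}Z^H_t$ is a supersolution of \eqref{eq:BSDE_sing} and invoke minimality to get $Y'\le Y$; then transport $Y'$ into a candidate solution $(H',Z^{H'})$ of \eqref{eq:BSDE_H}, prove the two-sided bound $\|H'_t\|\le c'(T-t)^2$ and the square-integrability of $Z^{H'}$, and conclude by the uniqueness part of \cref{lem:existence_asymp_sol}. The variations (deriving the upper bound on $H'$ from \cref{lem:uniform_upper_bound} rather than from $Y'\le Y$ and $\|H_t\|\le c(T-t)^2$; obtaining the lower bound by a pathwise resolvent expansion of $(\int_t^T\eta_s^{-1}ds)^{-1}$ with conditional moment estimates rather than by \cref{lem:conv_of_inv} followed by a Neumann series) are sound and essentially equivalent in content.

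There is one step you skip that is genuinely needed. \cref{def:supersol_bsde} requires $Y$ to take values in $\mathbb{S}^d_+$ on all of $[0,T)$, and your argument only gives $\lambda_{\min}(Y_t)\ge \tfrac{\delta}{T-t}-c$, which is positive only for $t$ close to $T$ (say $t\in[T-\tau,T)$ with $\tau<\delta/c$). For $t\in[0,T-\tau]$ the bound $\|H_t\|\le c(T-t)^2$ does not by itself rule out negative eigenvalues of $Y_t$, and without positive semidefiniteness you cannot place $(Y,Z)$ in the class of supersolutions and hence cannot invoke minimality of $(Y',Z')$ to obtain $Y'\le Y$. The paper closes this by observing that $Y_{T-\tau}$ is bounded and positive definite and that $(Y,Z)$ restricted to $[0,T-\tau]$ solves the finite-horizon Riccati BSDE \eqref{eq:BSDE_pen2} with this terminal condition; by \cite[Theorem~6.1]{sunxiongyong2021indefinite} (cf.\ \cref{prop:sol_pen_problem}) $Y_t$ is then the value of an LQ problem with nonnegative running and terminal costs, hence positive semidefinite on $[0,T-\tau]$ as well. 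Adding this argument (or an equivalent one) makes your proof complete.
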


\begin{proof}
    Let $Z=(Z_t)_{t \in [0,T)}$ be defined by 
    $$Z_t = \dfrac{1}{T-t} \sigma_t^{\eta} + \dfrac{1}{(T-t)^2} Z_t^H, \quad  t \in [0,T).$$
    Then we can show that the pair $(Y,Z)$ satisfies~\eqref{eq:supersoln_BSDE_equation}. 
    Moreover, it holds for all $t \in [0,t]$ that $Z \in L^2(\Omega \times [0,t])$. 
    Note that there exists $c \in (0,\infty)$ such that $P$-a.s.\ for all $t\in [0,T]$ it holds 
    \begin{equation}\label{eq:property_H}
        \| H_t\| \leq c(T-t)^2 .
    \end{equation}
    This and \ref{assumption:D1} ensure for all $t \in [0,T)$ that  $(Y_s)_{s \in [0,t]}$ is bounded.  
    In addition, we obtain from \eqref{eq:property_H} and \ref{assumption:A0} for all $x \in \mathbb R^d\setminus\{0\}$ and $t \in (\max\{0,T-\frac{\delta}{c}\},T)$ that
    $$\langle x, Y_t x \rangle \geq \Big( \dfrac{\delta}{T-t} - c\Big)  \| x \|^2 
    > 0.$$
    Thus there exists $\tau \in (0,T)$ such that $Y_t$ is $\mathbb S^d_{++}$-valued for all $t\in [T-\tau,T)$.
    Since $Y_{T-\tau}$ is bounded and $\mathbb S^d_{++}$-valued, and due to \ref{assumption:A0} and \ref{assumption:D1}, we can apply \cite[Theorem~6.1]{sunxiongyong2021indefinite} to deduce via non-negativity of the associated cost functional that $Y_t$ is $\mathbb S^d_{+}$-valued for all $t\in [0,T-\tau]$.  
    Observe that for any path of a  
    process $(\psi_t)_{t \in [0,T]}$ that is left-continuous at $T$ with $\lim_{t \to T} \psi_t \neq 0$, it holds that 
    $$\liminf_{t \to T}\langle \psi_t, Y_t \psi_t \rangle \ge \liminf_{t \to T} \big(\lambda_{\min}(Y_t) \|\psi_t \|^2\big) = \infty.$$ 
    We conclude that $(Y,Z)$ is a solution of the singular BSDE~\eqref{eq:BSDE_sing} in the sense of \cref{def:supersol_bsde}. 
    It follows $P$-a.s.\ for all $t \in [0,T)$ that $Y_t \ge Y'_t$.

    Next, we define the process $H'=(H'_t)_{t \in [0,T]}$ by $H'_T=0$ and  
    $$H'_t = (T-t)^2 Y'_t - (T-t) \eta_t, \quad t\in [0,T).$$
    Moreover, let $Z^{H'}=(Z^{H'}_t)_{t \in [0,T]}$ be defined by $Z^{H'}_T=0$ and 
    \begin{equation}\label{eq:defZHprime}
        Z^{H'}_t=-(T-t)\sigma^{\eta}_t+(T-t)^2 Z'_t, \quad t\in[0,T).
    \end{equation}
    Observe that if $H=H'$, then $Y=Y'$. 
    We therefore show in the remainder of this proof that $(H',Z^{H'})$ is a solution of the BSDE~\eqref{eq:BSDE_H} in the sense of \cref{def:sol_bsde_sing_gen}.  
    Note that by \ref{assumption:D1} we have $P$-a.s.\ for all $t \in [0,T]$ that $H'_t \ge - K (T-t) \Id$.
    This, the fact that $Y\ge Y'$, and \eqref{eq:property_H} imply $P$-a.s.\ for all $t\in[0,T]$ that 
    \begin{equation}\label{eq:asymp_aux1}
         - K (T-t) \Id \leq H'_t \leq H_t \leq c (T-t)^2 \Id.
    \end{equation}     
    From \cref{lem:lower_bound} and \cref{lem:conv_of_inv}
    we know for all $t \in [0,T)$ that
    \begin{equation}\label{eq:lowerboundYasympexpansion}
        Y'_t \geq \bigg(  E\bigg[ \int_t^T \eta_s^{-1} ds \,\bigg|\, \cF_t\bigg]\bigg)^{-1}. 
    \end{equation}
    Integration by parts shows for all $0\le t\le s \le T$ that 
    \begin{equation*}
        \eta_s^{-1} = \eta_t^{-1} +\int_t^s \eta_r^{-1}\big(  \sigma_r^\eta \eta_r^{-1} \sigma_r^\eta -  b_r^\eta \big)\eta_r^{-1} dr - \int_t^s \eta_r^{-1} \sigma_r^\eta \eta_r^{-1} dW_r.
    \end{equation*}
    Since $\eta^{-1}$ and $\sigma^{\eta}$ are bounded, this and Fubini's theorem imply for all $t \in [0,T]$ that   
    \begin{equation}\label{eq:cond_exp_of_eta}
        \begin{split}
        E\bigg[ \int_t^T \eta_s^{-1} ds \, \bigg| \, \cF_t\bigg] 
        & = (T-t) \eta_t^{-1} + E\bigg[ \int_t^T \int_t^s \eta_r^{-1}\big(  \sigma_r^\eta \eta_r^{-1} \sigma_r^\eta -  b_r^\eta \big)\eta_r^{-1} \,dr\, ds \,\bigg|\, \cF_t\bigg] \\
        & = (T-t) \eta_t^{-1} +E\bigg[ \int_t^T (T-r) \eta_r^{-1}\big(  \sigma_r^\eta \eta_r^{-1} \sigma_r^\eta -  b_r^\eta   \big)\eta_r^{-1} \, dr  \,\bigg|\, \cF_t\bigg] .
        \end{split}
    \end{equation}
    For all $t \in [0,T)$ define 
    $$B_t = \dfrac{1}{T-t} E\bigg[ \int_t^T (T-r) \eta_r^{-1}\big(  \sigma_r^\eta \eta_r^{-1} \sigma_r^\eta -  b_r^\eta   \big)\eta_r^{-1} \, dr  \,\bigg|\, \cF_t\bigg] \eta_t .$$
    It follows from \eqref{eq:lowerboundYasympexpansion} and \eqref{eq:cond_exp_of_eta} for all $t \in [0,T)$ that 
    \begin{equation*}
        Y'_t \ge ( (T-t) \eta_t^{-1} + (T-t) B_t  \eta_t^{-1} )^{-1} .
    \end{equation*}
    By \ref{assumption:A0}, \ref{assumption:D0}, and \ref{assumption:D1} there exists $b \in (0, \infty)$ such that $P$-a.s.\ for all $t\in[0,T)$ it holds $\|B_t\| \leq \frac{b}{2} (T-t)$. 
    Thus there exists $\tau' \in (0,T)$ such that $P$-a.s.\ for any $t \in [T-\tau',T)$ we have that $\Id + B_t$ is invertible with  
    $$\left( \Id +  B_t\right)^{-1} - \Id = B_t \sum_{k=1}^{\infty} (-1)^k B_t^{k-1}. $$
    Hence, 
    there exists $b' \in (0,\infty)$ such that $P$-a.s.\ for all $t \in [T-\tau',T)$ it holds  
    $$\dfrac{1}{(T-t)^2}H'_t 
    \ge \dfrac{1}{T-t} \eta_t [( \Id +  B_t)^{-1} - \Id ] \ge - b' \Id.$$ 
    Combining this with \eqref{eq:asymp_aux1} 
    shows that there exists $c' \in (0,\infty)$ such that $P$-a.s.\ for all $t \in [0,T]$ it holds $\lVert H'_t \rVert^2 \le c'(T-t)^2$.  
    Moreover, observe that both $H'$ and $Z^{H'}$ are $\mathbb{S}^d$-valued. 
    It\^o's formula shows for all $r \in (0,T)$ that 
    \begin{align} \nonumber
            (T-r)^4 \lVert Y'_{r} \rVert^2 
            & = \lVert Y'_0 \rVert^2  
             - 4 \int_0^{r} (T-s)^3 \lVert Y'_s\rVert^2 ds 
            + \int_0^{r} (T-s)^4 \lVert Z'_s \rVert^2 ds \\ \nonumber
            & \quad + 2 \int_0^{r} (T-s)^4 \tr( Y'_s Z'_s) dW_s 
            - 2 \int_0^{r} (T-s)^4 \tr(Y'_s \lambda_s ) ds \\
            & \quad + 2 \int_0^{r} (T-s)^4 \tr( Y'_s Y'_s \eta_s^{-1} Y'_s ) ds .
            \label{eq:itointegrabilityz}
    \end{align}
    Moreover, 
    note that 
    \eqref{eq:asymp_aux1} and \ref{assumption:D1} imply $P$-a.s.\ for all $t \in [0,T)$ that 
    $$0 \le Y_t'\le \frac{1}{T-t} K \Id + c \Id.$$ 
    This, \ref{assumption:A0}, \ref{assumption:D1}, the fact that $Z' \in L^2(\Omega \times [0,t])$ for any $t \in [0,T)$, and \eqref{eq:itointegrabilityz} ensure that there exists $a' \in (0,\infty)$ such that for all $r \in (0,T)$ it holds 
    \begin{equation*}
        E\bigg[ \int_0^{r} (T-s)^4 \lVert Z'_s \rVert^2 ds \bigg]
        \le a' .
    \end{equation*}
    Fatou's lemma implies that $E[ \int_0^{T} (T-s)^4 \lVert Z'_s \rVert^2 ds ]\le a'$.
    This, \ref{assumption:D0}, and \eqref{eq:defZHprime} prove that $Z^{H'} \in L^2(\Omega \times [0,T])$. 
    Furthermore, we can show that the pair $(H',Z^{H'})$ satisfies~\eqref{eq:bsde_sing_gen_eq}. 
    Now, \cref{lem:existence_asymp_sol} completes the proof.
\end{proof}

\begin{proof}[Proof of \cref{prop:asymp_expansion}] 
    Due to \cref{rem:auxilary_rem_sec4.2}, it suffices to consider the case where $A$ and $\phi$ are zero. The claim now follows from \cref{lem:min_asymp_sol}, where the existence of a unique solution to the BSDE~\eqref{eq:BSDE_H} in the sense of \cref{def:sol_bsde_sing_gen} is ensured by \cref{lem:existence_asymp_sol}.
\end{proof}

\begin{remark}\label{rem:asymptot_expansion_uncorr_mult_increm}
Assume that \ref{assumption:A0}, \ref{assumption:D0}, and \ref{assumption:D1} are satisfied. 
In the framework of \cref{ssect:umi} (i.e., we additionally assume \ref{assumption:C0} and \ref{assumption:C1}), the BSDE~\eqref{eq:BSDE_H} becomes 
\begin{equation}\label{eq:BSDEHrem}
    dH_t = \bigg( \dfrac{1}{(T-t)^2} H_t \eta_t^{-1}H_t  - (T-t)b^{\eta}_t  \bigg) dt + Z^H_t dW_t, \quad t \in [0,T], \quad H_T=0.
\end{equation}

(i) 
If $\eta$ is an $\mathbb{S}_{++}^d$-valued continuous  martingale ($b^\eta=0$), then the solution of~\eqref{eq:BSDEHrem} is given by $(H,Z^H)=(0,0)$. Thus we recover that $Y_t = \eta_t / (T-t)$, $t\in[0,T)$ (see \cref{prop:sol_bsde_uncor_mult}). 

(ii) 
More generally, suppose that $\eta$ has uncorrelated multiplicative increments. 
Then for all $t \in [0,T]$ the drift $b^\eta_t$ is equal to $\eta_t g(t)$, where $g$ is a deterministic matrix-valued function (see \cite[Lemma~11]{caci:deni:popi:25} with straightforward modifications).  
Let $H_T=0$ and $H_t = (T-t) \eta_t h(t)$, $t\in[0,T)$, where 
$$h(t) = -\Id + (T-t) \left(  \int_t^T G(s)^{-1}G(t)  ds \right)^{-1}, \quad t\in[0,T),$$
and $dG(t)=G(t) g(t) dt$, $t\in[0,T]$, $G(0)=\Id$. 
Then $H=(H_t)_{t \in [0,T]}$ is the first component of the solution to~\eqref{eq:BSDEHrem}, 
see \cite[Lemma~13]{caci:deni:popi:25} with straightforward modifications. It follows from~\eqref{eq:asymp_expansion2} for all $t\in [0,T)$ that 
$$Y_t= \dfrac{\eta_t}{T-t} + \dfrac{H_t}{(T-t)^2} = \eta_t  \bigg(  \int_t^T G(s)^{-1}G(t) ds \bigg)^{-1}.$$
Since $E[\eta_s | \mathcal F_t]=\eta_t G(t)^{-1}G(s)$, $0 \le t \le s \le T$, we again obtain the result of \cref{prop:sol_bsde_uncor_mult}.
\end{remark}

\begin{remark}
Assume that \ref{assumption:A0}, \ref{assumption:D0}, and \ref{assumption:D1} are satisfied. 
Fix $t \in [0, T)$ and $x \in \R^d$. 
By \cref{prop:sol_sing_problem} and \cref{prop:asymp_expansion} the optimal state process for~\eqref{eq:val_fct_sing} satisfies 
\begin{equation*}
\begin{split}
    \dot X_s = - \eta^{-1}_s \left( \frac{\eta_s}{(T-s)} + \frac{H_s}{(T-s)^2} - \phi_s  \right) X_s + A_s X_s, \quad s \in [t, T), \quad X_t = x.
\end{split}
\end{equation*}
Thus, we have for all $s \in [t,T)$ that 
\begin{equation*}
\begin{split}
    d \|X_s \|^2 = \Big\langle X_s,\Big(&-\frac{2}{(T-s)} - \frac{1}{(T-s)^2}(\eta^{-1}_s H_s 
    + (\eta^{-1}_s H_s)^\top)\\
    &+ (A^\top + A + \eta^{-1}_s \phi_s + \phi_s^\top \eta^{-1}_s) \Big)X_s\Big\rangle  ds .
\end{split}
\end{equation*}
By \cref{prop:asymp_expansion} 
there exists a constant $c_1 \in (0,\infty)$ such that for all $s \in [0,T]$ it holds $\| H_s  \| \le c_1 (T-s)^2 $.
Moreover, remark that for all $s \in [0,T]$ it holds $\eta_s \ge \delta \Id$, $\| A_s \| \le K$ and $\| \phi_s \| \le K $.
Let $c_2 = 2 (\delta^{-1} \sqrt{d} c_1 + K + \delta^{-1} \sqrt{d} K)$. 
Then we have for all $s \in [t,T)$ that
\begin{equation*}
    \lVert X_s \rVert^2 \le \lVert x \rVert^2 
    + \int_t^s \Big( - \frac{2}{T-r} + c_2 \Big) \lVert X_r \rVert^2 dr .
\end{equation*}
Gronwall's lemma yields for all $s \in [t, T)$ that 
\begin{equation*}
    \lVert X_s \rVert^2 \le \lVert x \rVert^2 
    \exp\bigg( \int_t^s \Big( - \frac{2}{T-r} + c_2 \Big) dr \bigg) 
    \le \| x \|^2\frac{(T-s)^2}{(T-t)^2} e^{C_2T} .
\end{equation*}
It follows that $\| X_s \| = O(T-s)$ as $s \to T$.  
Moreover, we derive that $\lim_{s \to T} \langle X_s, Y_s X_s \rangle = 0$ and, by dominated convergence, $\lim_{s \to T} E[\langle X_s, Y_s X_s \rangle ] = 0$. Hence, when all coefficients are bounded, the asymptotic behavior of the optimal state is similar to that in the one-dimensional case.
\end{remark}

\appendix
\section{Two elementary lemmas}

In this appendix we provide two elementary lemmas that we use in the proofs. 
The first one helps in establishing matrix convergence. 
 
\begin{lemma}\label{lemma:matrix_convergence}
Let $\xi^n$, $n \in \N$, be a monotone and bounded sequence of matrices in $\mathbb S^d$. Then there exists the limit $\lim_{n \to \infty} \xi^n = \xi$.  
\end{lemma}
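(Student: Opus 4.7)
The plan is to reduce the matrix convergence to the classical monotone convergence theorem for real sequences via quadratic forms. Without loss of generality I assume the sequence $(\xi^n)_{n \in \N}$ is non-decreasing; the non-increasing case follows by considering $(-\xi^n)_{n \in \N}$.

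First I would observe that for every fixed vector $v \in \R^d$, the scalar sequence $(\langle v, \xi^n v \rangle)_{n \in \N}$ is non-decreasing in $n$. Indeed, by monotonicity of $(\xi^n)$ the matrix $\xi^{n+1} - \xi^n$ is positive semidefinite, so $\langle v, (\xi^{n+1} - \xi^n) v \rangle \ge 0$. Moreover, since by assumption there exists $M \in (0, \infty)$ with $\lVert \xi^n \rVert \le M$ for all $n \in \N$, we have $\lvert \langle v, \xi^n v \rangle \rvert \le M \lVert v \rVert^2$. Hence by the monotone convergence theorem for real numbers, the limit $q(v) := \lim_{n \to \infty} \langle v, \xi^n v \rangle$ exists for every $v \in \R^d$.

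Next I would extract entrywise convergence using polarization. Let $(e_i)_{i=1}^d$ denote the canonical basis of $\R^d$. For diagonal entries, $\xi^n_{ii} = \langle e_i, \xi^n e_i \rangle \to q(e_i)$. For off-diagonal entries, the symmetry of $\xi^n$ yields
\begin{equation*}
    \xi^n_{ij} = \tfrac{1}{2}\bigl( \langle e_i + e_j, \xi^n (e_i+e_j) \rangle - \langle e_i, \xi^n e_i \rangle - \langle e_j, \xi^n e_j \rangle \bigr),
\end{equation*}
and each term on the right-hand side converges by the previous step. Define $\xi \in \R^{d\times d}$ entrywise as this limit; by construction $\xi$ is symmetric, and $\xi^n \to \xi$ entrywise. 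Since all norms on $\R^{d \times d}$ are equivalent in finite dimension, this entrywise convergence is equivalent to convergence in the Frobenius norm (or any other matrix norm used in the paper), which yields the claim.

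There is no real obstacle here: the result is essentially the one-dimensional monotone convergence theorem applied to the $d(d+1)/2$ independent entries after a polarization step. The only thing to be slightly careful about is making clear what "bounded" means (boundedness of, e.g., the Frobenius norm implies boundedness of each matrix entry, hence of each relevant quadratic form), and that the argument works for both orientations of monotonicity.
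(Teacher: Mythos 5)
Your proof is correct and follows essentially the same route as the paper: reduce to monotone bounded scalar sequences of quadratic forms and recover the off-diagonal entries by polarization (the paper uses the identity $\langle e_i,\xi^n e_j\rangle = \tfrac{1}{4}(\langle e_i+e_j,\xi^n(e_i+e_j)\rangle - \langle e_i-e_j,\xi^n(e_i-e_j)\rangle)$, which is equivalent to your variant). No issues.
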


\begin{proof}
    To establish the claim we use the polarization property that for all $i,j \in \{1,\ldots, d \}$  and $n\in\N$ it holds
    $$\langle e_i,\xi^n e_j\rangle = \tfrac{1}{4}(\langle e_i+e_j,\xi^n(e_i+e_j)\rangle - \langle e_i-e_j,\xi^n(e_i-e_j)\rangle ).$$ 
    The existence of the limits of both terms on the right-hand side follows from monotonicity and boundedness.
\end{proof}

The second lemma shows the elementary inequality $E[X^{-1}] \ge (E[X])^{-1}$ for random variables $X$ taking values in the set of symmetric positive definite matrices. We give an elementary, self-contained proof that avoids using convexity of the inversion operator or an operator Jensen inequality.

\begin{lemma}\label{lem:conv_of_inv}
    Let $X$ be a random variable taking values in $\mathbb{S}^d_{++}$ with $E[\| X \|] < \infty$ and $E[\| X^{-1} \|] < \infty$. Then we have $E[X^{-1}] \ge (E[X])^{-1}$.   
\end{lemma}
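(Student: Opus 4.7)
The plan is to prove the matrix inequality $E[X^{-1}]\ge (E[X])^{-1}$ via a Cauchy--Schwarz argument on suitably chosen vector-valued random variables. Since both sides are symmetric, it suffices to show for every $y\in\R^d$ that $\langle y, E[X^{-1}] y\rangle \ge \langle y, (E[X])^{-1} y\rangle$. The case $y=0$ is trivial, so fix $y\neq 0$ and set $M:= E[X]$. Note that $M\in \mathbb{S}^d_{++}$: symmetry and positive semidefiniteness are immediate from $X\in \mathbb{S}^d_{++}$, and the integrability bound $E[\|X^{-1}\|]<\infty$ forces $P(X^{-1} \text{ finite})=1$, so $\langle v, Xv\rangle >0$ a.s.\ for every $v\neq 0$, hence $\langle v, Mv\rangle >0$.

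The core step is the following application of Cauchy--Schwarz for $\R^d$-valued random variables: for any random $u,v\colon\Omega\to\R^d$ with $u,v\in L^2(\Omega,\R^d)$, one has
\begin{equation*}
    \bigl(E[\langle u,v\rangle]\bigr)^2 \le E[\|u\|^2]\, E[\|v\|^2].
\end{equation*}
I would apply this with $u := X^{1/2} z$ and $v := X^{-1/2} y$, where $z := M^{-1} y\in\R^d$ is deterministic and $X^{\pm 1/2}$ denotes the (symmetric) positive definite square root. The pointwise identity $\langle u,v\rangle = z^\top X^{1/2} X^{-1/2} y = \langle z,y\rangle$ holds deterministically, so the left-hand side becomes $\langle z,y\rangle^2 = \langle M^{-1}y, y\rangle^2$. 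For the right-hand side I use $E[\|u\|^2] = E[\langle z, Xz\rangle] = \langle z, Mz\rangle = \langle M^{-1}y, y\rangle$ and $E[\|v\|^2]=\langle y, E[X^{-1}] y\rangle$. Combining yields
\begin{equation*}
    \langle M^{-1}y, y\rangle^2 \le \langle M^{-1}y, y\rangle \cdot \langle y, E[X^{-1}] y\rangle,
\end{equation*}
and dividing by $\langle M^{-1}y,y\rangle > 0$ gives the desired $\langle y,(E[X])^{-1} y\rangle\le \langle y, E[X^{-1}]y\rangle$.

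The only nontrivial point is checking integrability of $u$ and $v$, which is exactly where the hypotheses are used: $E[\|u\|^2] = \langle z, Mz\rangle$ is finite because $E[\|X\|]<\infty$ implies $M$ is well defined, and $E[\|v\|^2] = \langle y, E[X^{-1}]y\rangle \le \|y\|^2\, E[\|X^{-1}\|]<\infty$. I do not anticipate a real obstacle; the argument is essentially an operator-free version of the fact that $t\mapsto t^{-1}$ is convex on $(0,\infty)$, circumventing any appeal to operator Jensen or the convexity of matrix inversion.
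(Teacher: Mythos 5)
Your proof is correct and takes essentially the same route as the paper's: the paper writes $\langle v,(E[X^{-1}]-(E[X])^{-1})v\rangle = E\big[\|X^{-1/2}v - X^{1/2}(E[X])^{-1}v\|^2\big]\ge 0$, which is exactly the completed-square form of your Cauchy--Schwarz application to the same pair of random vectors $X^{-1/2}v$ and $X^{1/2}(E[X])^{-1}v$. The only cosmetic difference is that you invoke Cauchy--Schwarz and divide by $\langle (E[X])^{-1}v,v\rangle>0$, while the paper expands the square directly.
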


\begin{proof}
    Let $v \in \R^d$ and set $u = (E[X])^{-1}v$. Then we have
    \begin{equation*}
    \begin{split}
        &\left\langle v, \left(E[X^{-1}] - (E[X])^{-1} \right) v \right\rangle \\
        &= \langle v, E[X^{-1}] v \rangle - 2 \langle v, (E[X])^{-1}v \rangle + \langle (E[X])^{-1}v, E[X] (E[X])^{-1}v \rangle \\
        &= E\left[ \langle v, X^{-1} v \rangle - 2 \langle u,v\rangle + \langle u, X u \rangle\right]= E\Big[\big\| X^{-\frac{1}{2}} v - X^{\frac{1}{2}} u \big\|^2\Big]
        \ge 0.
    \end{split}
    \end{equation*}
 This completes the proof.   
\end{proof}

\section*{Acknowledgments}

All authors acknowledge funding by the Deutsche Akademische Austauschdienst (DAAD, German Academic Exchange Service) within the funding program ``Programme for Pro\-ject-Related Personnel Exchange with France'' -- Project number 57702378, and by Campus France PHC (Projet Hubert Curien) Procope number 50835ZC. 
Julia Ackermann and Thomas Kruse acknowledge funding by the Deutsche For\-schungsgemeinschaft (DFG, German Research Foundation) -- Project-ID 531152215 -- CRC 1701.

\bibliographystyle{abbrv}
\bibliography{literature}

\begin{thebibliography}{10}

\bibitem{ahmadi2021backward}
M.~Ahmadi, A.~Popier, and A.~D. Sezer.
\newblock Backward stochastic differential equations with non-{M}arkovian
  singular terminal conditions for general driver and filtration.
\newblock {\em Electron. J. Probab}, 26, 2021.

\bibitem{ankirchner2020optimal}
S.~Ankirchner, A.~Fromm, T.~Kruse, and A.~Popier.
\newblock Optimal position targeting via decoupling fields.
\newblock {\em The Annals of Applied Probability}, 30(2):644--672, 2020.

\bibitem{ankirchner2014bsdes}
S.~Ankirchner, M.~Jeanblanc, and T.~Kruse.
\newblock {BSDE}s with singular terminal condition and a control problem with
  constraints.
\newblock {\em SIAM Journal on Control and Optimization}, 52(2):893--913, 2014.

\bibitem{ankirchner2015optimal}
S.~Ankirchner and T.~Kruse.
\newblock Optimal position targeting with stochastic linear-quadratic costs.
\newblock {\em Banach Center Publications}, 104(1):9--24, 2015.

\bibitem{bank2018linear}
P.~Bank and M.~Vo{\ss}.
\newblock Linear quadratic stochastic control problems with stochastic terminal
  constraint.
\newblock {\em SIAM Journal on Control and Optimization}, 56(2):672--699, 2018.

\bibitem{bismut1976linear}
J.-M. Bismut.
\newblock Linear quadratic optimal stochastic control with random coefficients.
\newblock {\em SIAM Journal on Control and Optimization}, 14(3):419--444, 1976.

\bibitem{bismut1978controle}
J.-M. Bismut.
\newblock Contr{\^o}le des syst{\`e}mes lin{\'e}aires quadratiques:
  applications de l’int{\'e}grale stochastique.
\newblock In {\em S{\'e}minaire de Probabilit{\'e}s XII: Universit{\'e} de
  Strasbourg 1976/77}, pages 180--264. Springer, 1978.

\bibitem{cacitti2025continuity}
D.~Cacitti-Holland, L.~Denis, and A.~Popier.
\newblock Continuity problem for {BSDE} and {IPDE} with singular terminal
  condition.
\newblock {\em Journal of Mathematical Analysis and Applications},
  543(1):128845, 2025.

\bibitem{cacitti2025growth}
D.~Cacitti-Holland, L.~Denis, and A.~Popier.
\newblock Growth condition on the generator of {BSDE} with singular terminal
  value ensuring continuity up to terminal time.
\newblock {\em Stochastic Processes and their Applications}, 183:104588, 2025.

\bibitem{caci:deni:popi:25}
D.~Cacitti-Holland, L.~Denis, and A.~Popier.
\newblock {Malliavin derivative and sensitivity for optimal liquidation}.
\newblock Preprint, hal-05072816 or arXiv:2505.14287, May 2025.

\bibitem{davis1977linear}
M.~H. Davis and M.~Davis.
\newblock {\em Linear estimation and stochastic control}, volume 162.
\newblock Chapman and Hall London, 1977.

\bibitem{fu2021mean}
G.~Fu, P.~Graewe, U.~Horst, and A.~Popier.
\newblock A mean field game of optimal portfolio liquidation.
\newblock {\em Mathematics of Operations Research}, 46(4):1250--1281, 2021.

\bibitem{fu2024mean}
G.~Fu, P.~P. Hager, and U.~Horst.
\newblock A mean-field game of market entry: Portfolio liquidation with trading
  constraints.
\newblock {\em arXiv:2403.10441}, 2024.

\bibitem{fu2020mean}
G.~Fu and U.~Horst.
\newblock Mean-field leader-follower games with terminal state constraint.
\newblock {\em SIAM Journal on Control and Optimization}, 58(4):2078--2113,
  2020.

\bibitem{fu2025system}
G.~Fu, X.~Shi, and Z.~Q. Xu.
\newblock A system of {BSDE}s with singular terminal values arising in optimal
  liquidation with regime switching.
\newblock {\em SIAM Journal on Control and Optimization}, 63(5):3091--3111,
  2025.

\bibitem{graewe2017optimal}
P.~Graewe and U.~Horst.
\newblock Optimal trade execution with instantaneous price impact and
  stochastic resilience.
\newblock {\em SIAM Journal on Control and Optimization}, 55(6):3707--3725,
  2017.

\bibitem{graewe2015non}
P.~Graewe, U.~Horst, and J.~Qiu.
\newblock A non-{M}arkovian liquidation problem and backward {SPDE}s with
  singular terminal conditions.
\newblock {\em SIAM Journal on Control and Optimization}, 53(2):690--711, 2015.

\bibitem{graewe2018smooth}
P.~Graewe, U.~Horst, and E.~S{\'e}r{\'e}.
\newblock Smooth solutions to portfolio liquidation problems under
  price-sensitive market impact.
\newblock {\em Stochastic Processes and their Applications}, 128(3):979--1006,
  2018.

\bibitem{grae:popi:21}
P.~Graewe and A.~Popier.
\newblock Asymptotic approach for backward stochastic differential equation
  with singular terminal condition.
\newblock {\em Stochastic Process. Appl.}, 133:247--277, 2021.

\bibitem{horst2024optimal}
U.~Horst and E.~Kivman.
\newblock Optimal trade execution under small market impact and portfolio
  liquidation with semimartingale strategies.
\newblock {\em Finance and Stochastics}, 28(3):759--812, 2024.

\bibitem{horst2016constrained}
U.~Horst, J.~Qiu, and Q.~Zhang.
\newblock A constrained control problem with degenerate coefficients and
  degenerate backward {SPDE}s with singular terminal condition.
\newblock {\em SIAM Journal on Control and Optimization}, 54(2):946--963, 2016.

\bibitem{horst2018multidim}
U.~Horst and X.~Xia.
\newblock Multi-dimensional optimal trade execution under stochastic
  resilience.
\newblock {\em Preprint, arXiv:1804.03896}, 2018.

\bibitem{horst2018multidimensionaloptimaltradeexecution}
U.~Horst and X.~Xia.
\newblock Multi-dimensional optimal trade execution under stochastic
  resilience.
\newblock {\em Finance and Stochastics}, 23(4):889--923, 2019.

\bibitem{horst2021continuous}
U.~Horst and X.~Xia.
\newblock Continuous viscosity solutions to linear-quadratic stochastic control
  problems with singular terminal state constraint.
\newblock {\em Applied Mathematics \& Optimization}, 84(1):1159--1184, 2021.

\bibitem{horst2022portfolio}
U.~Horst, X.~Xia, and C.~Zhou.
\newblock Portfolio liquidation under factor uncertainty.
\newblock {\em The Annals of Applied Probability}, 32(1):80--123, 2022.

\bibitem{kohl:tang:01}
M.~Kohlmann and S.~Tang.
\newblock New developments in backward stochastic {R}iccati equations and their
  applications.
\newblock In M.~Kohlmann and S.~Tang, editors, {\em Mathematical Finance},
  pages 194--214, Basel, 2001. Birkh{\"a}user Basel.

\bibitem{kohl:tang:03}
M.~Kohlmann and S.~Tang.
\newblock Multidimensional backward stochastic {R}iccati equations and
  applications.
\newblock {\em SIAM Journal on Control and Optimization}, 41(6):1696--1721,
  2003.

\bibitem{kruse2016minimal}
T.~Kruse and A.~Popier.
\newblock Minimal supersolutions for {BSDEs} with singular terminal condition
  and application to optimal position targeting.
\newblock {\em Stochastic Processes and their Applications}, 126(9):2554--2592,
  2016.

\bibitem{liu2015spde}
W.~Liu and M.~R\"ockner.
\newblock {\em Stochastic partial differential equations: an introduction}.
\newblock Universitext. Springer, Cham, 2015.

\bibitem{marushkevych2020limit}
D.~Marushkevych and A.~Popier.
\newblock Limit behaviour of the minimal solution of a {BSDE} with singular
  terminal condition in the non {M}arkovian setting.
\newblock {\em Probability, Uncertainty and Quantitative Risk}, 5(1):1, 2020.

\bibitem{popier2006backward}
A.~Popier.
\newblock Backward stochastic differential equations with singular terminal
  condition.
\newblock {\em Stochastic processes and their applications},
  116(12):2014--2056, 2006.

\bibitem{popier2007backward}
A.~Popier.
\newblock Backward stochastic differential equations with random stopping time
  and singular final condition.
\newblock {\em The Annals of Probability}, pages 1071--1117, 2007.

\bibitem{popier2016limit}
A.~Popier.
\newblock Limit behaviour of {BSDE} with jumps and with singular terminal
  condition.
\newblock {\em ESAIM: Probability and Statistics}, 20:480--509, 2016.

\bibitem{popier2017integro}
A.~Popier.
\newblock Integro-partial differential equations with singular terminal
  condition.
\newblock {\em Nonlinear Analysis}, 155:72--96, 2017.

\bibitem{schied2013control}
A.~Schied.
\newblock A control problem with fuel constraint and {D}awson {W}atanabe
  superprocesses.
\newblock {\em Annals of applied probability}, 23(6):2472--2499, 2013.

\bibitem{sezer2019backward}
A.~D. Sezer, T.~Kruse, and A.~Popier.
\newblock Backward stochastic differential equations with non-markovian
  singular terminal values.
\newblock {\em Stochastics and Dynamics}, 19(02):1950006, 2019.

\bibitem{sunxiongyong2021indefinite}
J.~Sun, J.~Xiong, and J.~Yong.
\newblock Indefinite stochastic linear-quadratic optimal control problems with
  random coefficients: closed-loop representation of open-loop optimal
  controls.
\newblock {\em Ann. Appl. Probab.}, 31(1):460--499, 2021.

\bibitem{sun2023stochastic}
J.~Sun and J.~Yong.
\newblock Stochastic linear-quadratic optimal control problems—recent
  developments.
\newblock {\em Annual Reviews in Control}, 56:100899, 2023.

\bibitem{wonham1968matrix}
W.~M. Wonham.
\newblock On a matrix {R}iccati equation of stochastic control.
\newblock {\em SIAM Journal on Control}, 6(4):681--697, 1968.

\bibitem{yong1999stochastic}
J.~Yong and X.~Y. Zhou.
\newblock {\em Stochastic controls: Hamiltonian systems and HJB equations},
  volume~43.
\newblock Springer Science \& Business Media, 1999.

\end{thebibliography}

\end{document}